 \newcounter{ipotesi}
 \makeatletter \@addtoreset{equation}{section}
\newtheorem{thm}{Theorem}[section]
\newtheorem{hyp}[thm]{Hypotheses}{\rm}
{\rm}
\newtheorem{lemm}[thm]{Lemma}
\newtheorem{coro}[thm]{Corollary}
\newtheorem{prop}[thm]{Proposition}
\newtheorem{rmk}[thm]{Remark}{\rm}
\newcounter{parentenv}
\newcommand{\R}{{\mathbb R}}
\newcommand{\N}{{\mathbb N}}
\newcommand{\Rd}{\mathbb R^d}
\newcommand{\Cm}{\mathbb C^m}
\newcommand{\Om}{{\Omega}}
\newcommand{\re}{{\rm{Re}}\,}
\newcommand{\im}{{\rm{Im}}\,}
\newcommand{\T}{{\bm T}}
\newcommand{\f}{{\bm f}}
\newcommand{\uu}{{\bm u}}
\newcommand{\A}{\bm{\mathcal A}}
\newcommand{\V}{\mathcal V}
\newcommand{\vv}{{\bm v}}
\newcommand{\af}{\mathfrak a}
\newcommand{\ra}{\rightarrow}
\newcommand{\Vs}{V_{S}}
\newcommand{\C}{\mathbb C}
\renewcommand{\hat}[1]{\widehat{#1}}
\renewcommand{\tilde}[1]{\widetilde{#1}}
\begin{document}

\title[$L^p$-quasicontractiveness and kernel estimates]{\bm{$L^p$}-quasicontractiveness and Kernel estimates for semigroups generated by systems of elliptic operators}
\author[L. Angiuli, L. Lorenzi and E.M. Mangino ]{Luciana Angiuli, Luca Lorenzi, Elisabetta M. Mangino}
\address{L.A. \& E.M.M.:  Dipartimento di Matematica e Fisica ``Ennio De Giorgi'', Universit\`a del Salento, Via per Arnesano, I-73100 LECCE, Italy}
\address{L.L.: Dipartimento di Scienze Matematiche, Fisiche e Informatiche, Plesso di Mate\-matica, Universit\`a degli Studi di Parma, Parco Area delle Scienze 53/A, I-43124 PARMA, Italy}
\email{luciana.angiuli@unisalento.it}
\email{luca.lorenzi@unipr.it}
\email{elisabetta.mangino@unisalento.it}
\keywords{Systems of elliptic operators, unbounded coefficients, generation results, consistent strongly continuous analytic semigroups, kernel estimates}
\subjclass[2020]{35K40; 31C25, 47D06, 35K08}
\thanks{The authors are members of G.N.A.M.P.A. of the Italian Istituto Nazionale di Alta Matematica (INdAM). This paper is based upon work from the project PRIN2022 D53D23005580006 ``Elliptic and parabolic
problems, heat kernel estimates and spectral theory''}

\begin{abstract}
This paper focuses on systems of strongly coupled elliptic operators whose coefficients may be unbounded and are defined on a domain $\Omega \subseteq \mathbb{R}^d$. It is shown that a quasi-contractive semigroup in $L^p$-spaces can be associated with such operators for values of $p$ belonging to an interval that contains $2$ as an interior point. Then, under refined assumptions and considering systems of elliptic operators coupled up to first order, new kernel estimates are established with respect to a distance function that accounts for the growth of the diffusion coefficients and the potential term at infinity.
\end{abstract}
\maketitle

\section{Introduction}

In this paper, we investigate the $L^p$-quasicontractiveness of strongly continuous semigroups on $L^2(\Om;\C^m)$, which are associated with closed sesquilinear forms, 
and the consequent kernel estimates one can derive. Here, $\Om\subset \R^d$ is an open set and $d,m\in \N$.

In our analysis, we consider operators formally defined by
\begin{equation}
\label{op_ke-intro}
\A\uu=\sum_{h,k=1}^d D_h(q_{hk}D_k \uu)
+\sum_{h,k=1}^d D_h(A^{hk}D_k \uu)-\sum_{h=1}^d  B^h D_h \uu+\sum_{h=1}^d D_h( C^h\uu)-V\uu
\end{equation}    
on smooth vector-valued functions $\uu:\Omega\to \C^m$. Under suitable assumptions on the coefficients of $\A$, which are allowed to be unbounded, (see Sections \ref{sect-2}, \ref{sect-3} and \ref{sect-4}), we prove that 
the operator $\A$ generates a strongly continuous analytic semigroup $(\T_2(t))_{t \ge 0}$ in $L^2(\Om;\C^m)$. 
The purpose of this paper is twofold: first, to provide conditions which guarantee that $(\T_2(t))_{t \ge 0}$  can be extended to a bounded strongly continuous and analytic semigroup in $L^p(\Om;\C^m)$
for $p$ in a suitable interval containing $2$ as an interior point; second to prove Gaussian-type estimates for the heat kernel of the extended semigroup whenever such an extension exists for every $p \in ]1,\infty[$,

The problem of characterizing the $L^p$-contractivity or quasi-contractivity of semigroups generated by elliptic operators in divergence-form has a long history. Early results in this direction are due to Agmon, Douglis and Nirenberg in \cite{ADN}. A summary of references on this topic in the scalar case can be found in \cite{CM}, where elliptic operators in divergence-form with complex coefficients are considered. For instance, in \cite{CM04} the authors provide an algebraic characterization of the $L^p$-dissipativity of a quadratic form and consequently of the $L^p$-contractivity of the associated semigroup, in \cite{CD}, the notion of $p$-ellipticity is introduced to refine the results in \cite{CM} and prove the $L^p$-contractivity of semigroups associated to pure second-order scalar elliptic operators with complex coefficients.
Clearly, studying this kind of operators is equivalent to analyzing vector-valued operators with real coefficients.

In our situation, there are two main difficulties to be overcome. 
On one hand, the possible unboundedness of the coefficients of the operator $\A$, even in the scalar case, represents a significant difference with respect to the classical case of bounded coefficients. 
On the other hand, the presence of non-zero matrices $A^{hk}$ yields   the failure of the maximum modulus principle, and consequently of $L^\infty$-contractivity for the semigroup associated with $\A$. 
This prevents us from deducing $L^p$-estimates by interpolation,  a strategy that works well  instead in the weakly-coupled case. In the latter situation $L^\infty$-contractivity is tipically established in two different ways: using the Beurling-Deny criterion (see \cite[Theorem 1.3.3]{Dav} or \cite{Ouh1}, see also \cite[Section 2.5]{Ou}), or by proving a domination property $|\T_2(t)\f|\le T_2(t)|\f|$ for every $t \ge 0$ and $\f \in L^2(\Om;\C^m)$, where $(T_2(t))_{t\ge 0}$ is a scalar semigroup on $L^2(\Om)$ which enjoys good properties (see \cite{Ouh2}).
This semigroup domination plays an important role in several situations and allows the transfer of many properties  from $T_2(t)$ to $\T_2(t)$, such as heat kernel estimates, compactness and regularizing (summability improving) properties.


 The key tool we use to prove that $(\T_2(t))_{t \ge 0}$ extends consistently to a bounded semigroup $(\T(t))_{t \ge 0}$ in $L^p(\Om;\Cm)$  for some given $p \in (1,\infty)$ is a result due to Nittka (see \cite{Nit}) which provides a criterion for the $L^p$-contractivity of a $C_0$-semigroup on $L^2(\Om)$. More precisely, we take advantage of a more general version proved in \cite{ATS}, which deals with the vector-valued space $L^2(\Om;H)$, where $H$ is a Hilbert space. In that paper, the authors apply this criterion (in the case  $H=\C^m$) to purely second-order systems of elliptic operators with bounded coefficients, and show the existence of an interval $[p_-,p_+]$ with $1<p_-<2<p_+<\infty$ such that the semigroup generated in $L^2(\Om;\C^m)$ extends to a contractive $C_0$-semigroup on $L^p(\Om;\C^m)$ for all $p \in [p_-,p_+]$.

Nittka's criterion, stated in Theorem \ref{Nittka}, characterizes the $L^p$-contractivity of a semigroup generated on $L^2(\Omega;\C^m)$ and associated with a form $\af:\V\times \V\to \C$, where $\V$ is a Hilbert space, densely embedded in $L^2(\Om;\C^m)$, through two conditions. The first one requires that $\V$ be invariant under the orthogonal projection $P_p$ from $L^2(\Om;C^m)$ onto the closed convex set $\{\uu\in L^2(\Om;\C^m):\|\uu\|_p\le 1\}$. The second condition requires that $\re \af(\uu, |\uu|^{p-1}{\rm sign}\uu)\ge 0$ for every $\uu\in \V$ such that $|\uu|^{p-1}{\rm sign}\uu\in \V$. The role of the space $\V$ is to encode the boundary conditions. Although the dependence on $\V$ does not explicitly appear in the formal expression of $\A$, it is clear that two different spaces $\V$ and $\mathcal{W}$ give rise to two different operators $\A_\V$ and $\A_{\mathcal W}$, as they correspond to different boundary conditions.
 
In our setting, $\V$ is an abstract Hilbert space satisfying suitable assumptions (see Hypothesis \ref{V}), which turns out to be invariant with respect to $P_p$ thanks to the $L^\infty$-contractivity and hence the $L^p$-contractivity of the semigroup associated with the form 
form $\mathfrak{a}^0$ defined by 
\begin{align*}
\mathfrak a^0 (\uu,\vv)= \int_{\Omega} \bigg[\sum_{i=1}^m(Q\nabla u_i, \nabla v_i) + (\Vs\uu,\vv)\bigg ] dx, \qquad \uu, \vv \in\mathcal V,
\end{align*}
where $Q=(q_{hk})_{h,k=1,\ldots,d}$ and $V_S$ is the symmetrized part of the matrix of $V$. 
Moreover, the sign condition on the real part of $\af(\uu,|\uu|^{p-1}{\rm sign} \uu)$ allows us to apply Nittka's criterion to show that $(\T_2(t))_{t \ge 0}$ extends consistently to a semigroup $(\T_p(t))_{t \ge 0}$ on $L^p(\Om;\C^m)$ for $p$ lying in an interval that depends explicitly on the coefficients of the operator $\A$. In this case, the operator norm of $\T_p(t)$ is estimated uniformly with respect to $p$ in such interval. 

A slight different set of assumptions on the drift and the potential terms allows, once again via Nittka's criterion, to estimate the operator norm of $\T_p(t)$ by a function depending on $p$, when $p$ belongs to an interval depending essentially on the terms $A^{hk}$ in \eqref{op_ke-intro}. This interval coincides with the right-halfline $]1,\infty[$ whenever the matrix-valued functions $A^{hk}$ vanish for every $h,k=1, \ldots,d$.

Next, by refining the assumptions so that the $L^p$-contractivity for $(\T(t))_{t \ge 0}$ (we drop the dependence on $p$ thanks to the consistence of the semigroups $(\T_p(t))_{t \ge 0}$ in the $L^p$-scale) holds true for every $p \in ]1,\infty[$, we take advantage of the aforementioned estimates to prove generalized Gaussian estimates for the heat kernel $\bm{k}(t,\cdot, \cdot)=(k_{ij}(t,\cdot,\cdot))_{i,j=1,\ldots, m}$ of the semigroup $(\T(t))_{t \ge 0}$ in terms of a distance $d_{Q,V,\beta}$ which depends on $Q$, $V$ and a positive constant $\beta$ (see \eqref{distanza}). This metric is equivalent to the Euclidean one for instance when $Q$ and $V$ are bounded. We point out that, to our knowledge, this is the first result establishing kernel estimates for semigroups generated by  elliptic operators coupled up to first order. Other results concerning Gaussian-type estimates for vector-valued semigroups associated to elliptic operators with unbounded coefficients can be found, for instance, in \cite{ALP,ALMR1} 
See also \cite{ALLP,AngLorMan1,ALMR,AngLorMan,AngLorMan2,AngLorPal,KmR} and the reference therein for further results on vector-valued semigroups in the classical $L^p$-setting. 


The plan of the paper is as follows. in Section \ref{sect-2} we collect the standing assumptions, define the form $\af$ we are interested in, and derive the main preliminary results needed in the subsequent sections. We also recall some relevant results from \cite{ATS} and \cite{Nit}. In Section \ref{sect-3}, after establishing the generation of a strongly continuous analytic semigroup $(\T_2(t))_{t\ge 0}$ associated with $\af$ in $L^2(\Om;\C^m)$, we present two different $L^p$-estimates for $(\T_2(t))_{t \ge 0}$ and prove that it extends consistently across the $L^p$-scale. In Section \ref{sect-4}, we establish the kernel estimates. As a byproduct, we also obtain the existence of a bounded $H^\infty$-calculus in $L^p(\Omega;\Cm)$ for the first-order coupled operator $\A$, when the diffusion and drift coefficients are bounded. Finally, Section \ref{sect-5} provides some examples of operators of the form \eqref{op_ke-intro} and the corresponding semigroups to which our results apply.


\bigskip

{\bf Notation}
We denote  by $(\cdot, \cdot)$ and by $|\cdot|$, respectively, the Euclidean inner product and the Euclidean norm in $\mathbb K^m$, where $m\in\N$ and
$\mathbb K=\mathbb R$ or $\mathbb K=\mathbb C$. 

For every ($m\times m$)-matrix $M$, we denote by $M^T$ its transposed and by $M_{S}$ its symmetric part i.e., $M_{S}=\frac{1}{2}(M+M^T)$. If $M$ is a symmetric matrix, by $\lambda_M$ we denote its minimum eigenvalue.
For every $\theta^1,\ldots,\theta^d\in \C^m$ ($d\in\N$) and every $(d\times d)$-matrix $Q$, with entries $q_{ij}$ ($i,j=1,\ldots,d$), we set
$\mathcal{Q}(\theta^1,\ldots,\theta^d)=
\sum_{i=1}^m\sum_{h,k=1}^dq_{hk}(x)\theta^k_i\overline\theta^h_i$.

Vector-valued functions are displayed in bold style. Given an open set $\Omega$ and a function $\uu: \Omega \subseteq \Rd\to {\mathbb K}^m$, we denote by $u_k$ its $k$-th component. 
Morever we  set
${\rm sign}\hskip 1pt\uu=|\uu|^{-1}\uu\chi_{\{\uu\not={\bf 0}\}}$. 

For every $p\in [1,\infty[\,\cup\{\infty\}$\,,  $L^p(\Omega; \mathbb K^m)$  denotes the classical vector-valued Lebesgue space, endowed with the norm
$\|\f\|_p=(\int_{\Om} |\f(x)|^pdx)^{1/p}$, if $p<\infty$, and $\|\f\|_{\infty}=\operatorname{ess sup}_{x\in\Omega}|\f(x)|$, if $p=\infty$.
$W^{1,p}(\Omega; \mathbb K^m)$ is the classical vector-valued Sobolev space, i.e., the space of all functions $\uu\in L^p(\Om; \mathbb K^m)$ whose components have distributional first-order partial derivatives which belong to $L^p(\Om; \mathbb K^m)$. $W^{1,p}(\Om; \mathbb K^m)$ is endowed with its natural norm. Moreover, $W^{1,p}_0(\Om;\mathbb K^m)$ is the closure of 
$C^{\infty}_c(\Om;\mathbb K^m)$ with respect to the $W^{1,p}(\Omega;\mathbb K^m)$-norm, where 
$C^{\infty}_c(\Om;\mathbb K^m)$ denotes the set all the vector-valued functions which have compact support in $\Om$ and are infinitely many times differentiable.
When $m=1$, we simply write $L^p(\Om)$ and $C^{\infty}_c(\Om)$. If $X(\Om;\mathbb{K}^m)$ is $L^p(\Omega;\mathbb K^m)$ or $W^{1,p}(\Omega;\mathbb K^m)$, then we use the notation $X_{\rm{loc}}(\Om;\mathbb{K}^m)$ to denote the set of functions which belong to $X(K;\mathbb{K}^m)$ for every compact set $K\subset \Omega$.

\section{Preliminaries}
\label{sect-2}

We start by recalling the well-known Lax-Milgram theorem about operators associated with sesquilinear forms.

\begin{thm}\label{TH:coerc}
Let $\mathcal H, \mathcal V$ be complex  Hilbert spaces with $\mathcal V$ densely and continuously embedded in $\mathcal H$ and let $\af:\mathcal V\times\mathcal V\rightarrow \C$ be a sesquilinear form. If $\af$ is continuous with respect to the norm $\|\cdot\|_\mathcal V$ of $\mathcal V$ and  elliptic,  i.e., there exist $\omega\in\R$, $\mu>0$ and a dense subspace $D$ of $\mathcal V$ such that 
\begin{equation*}
\re\af(u,u) + \omega \|u\|^2_{\mathcal H} \geq\mu\|u\|^2_{\mathcal V}, \qquad\;\, u\in D, 
\end{equation*}
then the opposite of the linear operator $A: D(A) \subseteq \mathcal H \rightarrow \mathcal H$, defined by 
\begin{eqnarray*}
\left\{
\begin{array}{l}
D(A)=\{ u\in {\mathcal V}\, \mid\, \exists w\in \mathcal H \textit{ s.t. }  (w, \varphi)=\af(u,\varphi) \  \forall \varphi\in \mathcal V\},\\[1mm]
Au=w,\quad u\in D(A),
\end{array}
\right.
\end{eqnarray*}
generates a strongly continuous analytic  semigroup $(T(t))_{t\ge 0}$ on $\mathcal H$.
Moreover, if $\omega_0=\inf\{\omega\in\R: \re \af(u,u)+\omega \|u\|^2_{\mathcal H}\geq 0 \ \forall u\in \mathcal V\}$, then 
\begin{eqnarray*}
\|T(t)\|_{\mathcal L(\mathcal H)}\leq e^{\omega_0 t}, \qquad t\geq 0.
\end{eqnarray*}
\end{thm}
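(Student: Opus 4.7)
The plan is to reduce the theorem to the classical generation result for sectorial forms by exploiting a standard shifting argument. First I would extend the ellipticity inequality from the dense subspace $D$ to the whole of $\mathcal V$: since $\mathfrak a$ is continuous with respect to $\|\cdot\|_\mathcal V$, the map $u\mapsto \re\mathfrak a(u,u)+\omega\|u\|_\mathcal H^2-\mu\|u\|_\mathcal V^2$ is continuous on $\mathcal V$, so its nonnegativity on $D$ propagates to $\mathcal V$. After this extension, I set $\mathfrak a_\omega(u,v):=\mathfrak a(u,v)+\omega(u,v)_\mathcal H$, so that $\re\mathfrak a_\omega(u,u)\geq\mu\|u\|_\mathcal V^2$ for every $u\in\mathcal V$.

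Next I would show that $\mathfrak a_\omega$ is sectorial, i.e., its numerical range lies in a sector $|\arg z|\leq\theta<\pi/2$. This is immediate from the coercivity combined with the continuity bound $|\mathfrak a_\omega(u,u)|\leq M_\omega\|u\|_\mathcal V^2$, which gives $|\im\mathfrak a_\omega(u,u)|\leq (M_\omega/\mu)\re\mathfrak a_\omega(u,u)$. Then I would invoke the standard form-generation theorem (see e.g.\ Ouhabaz or Kato): the operator $A_\omega$ associated with the closed densely defined sectorial form $\mathfrak a_\omega$ is such that $-A_\omega$ generates a bounded analytic semigroup of contractions on the sector of angle $\pi/2-\theta$ on $\mathcal H$. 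The identification $A_\omega=A+\omega I$ is then clear from the definition of $D(A)$, and consequently $-A$ itself generates a strongly continuous analytic semigroup $(T(t))_{t\geq 0}$ on $\mathcal H$, with $T(t)=e^{-\omega t}e^{-tA_\omega}$.

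For the exponential bound, I would argue as follows. For any $\omega>\omega_0$, the form $\mathfrak a_\omega$ is accretive on $\mathcal V$, so $A_\omega=A+\omega I$ is an $m$-accretive operator on $\mathcal H$: indeed, for $u\in D(A)$,
\begin{equation*}
\re(A_\omega u,u)_\mathcal H=\re\mathfrak a_\omega(u,u)\geq 0.
\end{equation*}
By the Lumer--Phillips theorem (or directly from $\frac{d}{dt}\|e^{-tA_\omega}u\|_\mathcal H^2=-2\re(A_\omega e^{-tA_\omega}u,e^{-tA_\omega}u)_\mathcal H\leq 0$), the semigroup $e^{-tA_\omega}$ is contractive, hence $\|T(t)\|_{\mathcal L(\mathcal H)}\leq e^{\omega t}$. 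Taking the infimum over $\omega>\omega_0$ yields $\|T(t)\|_{\mathcal L(\mathcal H)}\leq e^{\omega_0 t}$.

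The only nontrivial point is the extension of the ellipticity inequality from $D$ to $\mathcal V$, which is crucial because the density of $D$ alone is used to make the form closed on $\mathcal V$ and to guarantee that the bound defining $\omega_0$ is actually attained on the full space; once this is handled the remainder is a routine application of well-established form methods.
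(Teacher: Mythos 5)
Your argument is correct and is the classical one; note that the paper itself states Theorem \ref{TH:coerc} without proof, recalling it as the standard Lax--Milgram/sectorial-form generation theorem, so there is no in-paper proof to compare against. Each of your steps is sound: the ellipticity estimate propagates from $D$ to $\mathcal V$ by continuity of $u\mapsto\re\af(u,u)+\omega\|u\|^2_{\mathcal H}-\mu\|u\|^2_{\mathcal V}$ (using the continuous embedding $\mathcal V\hookrightarrow\mathcal H$), the shifted form $\af_\omega$ is then closed, densely defined and sectorial, the standard form-generation theorem applies, and the bound $\|T(t)\|_{\mathcal L(\mathcal H)}\le e^{\omega_0 t}$ follows from accretivity of $\af_\omega$ for every $\omega>\omega_0$ together with Lumer--Phillips and passage to the infimum. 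The only blemish is the sign slip $T(t)=e^{-\omega t}e^{-tA_\omega}$: since $-A=-A_\omega+\omega I$, the correct identity is $T(t)=e^{\omega t}e^{-tA_\omega}$, which is in fact the relation you implicitly use two lines later when deducing $\|T(t)\|\le e^{\omega t}$ from the contractivity of $e^{-tA_\omega}$.
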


\begin{rmk}
\label{rem-2.2}
{\rm 
\begin{enumerate}[\rm (i)]
\item 
An immediate consequence of Theorem \ref{TH:coerc} is the fact that, if the form $\af$ is accretive (i.e., $\re\af(u,u)\geq0$ for every $u\in\mathcal V$), densely defined, continuous and elliptic, then the opposite of the operator associated to $\af$, generates a contractive semigroup.
\item
If $\af$ is densely defined, continuous and elliptic, then the same holds for  the adjoint form $\af^*:\mathcal V \times \mathcal V\rightarrow \C$, defined by
\begin{eqnarray*}
\af^*(u,w)=\overline{\af(w,u)}, \qquad\;\, u, w\in\mathcal V.
\end{eqnarray*}
Moreover, the operator associated with $\af^*$ is the adjoint operator $A^*$ of the operator $A$ associated with $\af$. In particular, the semigroup generated by $A^*$ is the adjoint of the semigroup $(T(t))_{t\ge 0}$ and 
$\|T(t)^*\|_{\mathcal{L}(\mathcal{H})}\le e^{\omega_0t}$ for every $t\in [0,\infty[$\,.
\end{enumerate}
}
\end{rmk}

We will be interested in the study of $L^p$-contractivity  of the semigroup $(\T(t))_{t\ge 0}$ associated with a system of elliptic operators. For this purpose, we will take advantage of  
 the Nittka's criterion for $L^p$-contractivity, adapting the more general setting of \cite[Theorem 1.1]{ATS} and \cite[Theorem 4.1]{Nit}, to the case $L^p(\Omega; \C^m)$.
 
\begin{thm}[Nittka's Criterion]\label{Nittka}
Fix $p\in ]1, \infty[$\,, let 
\begin{eqnarray*}
C_p=\{ \uu\in L^2(\Omega; \C^m)\cap L^p(\Omega; \C^m):  \|\uu\|_p\leq 1\}
\end{eqnarray*}
and let $P_p$ be the orthogonal projection of $L^2(\Omega; \C^m)$ on $C_p$. Further, consider a  densely embedded Hilbert subspace $\V$ of $L^2(\Omega; \C^m)$ and let $\af:\V\times\V\to\C$ be a continuous elliptic sesquilinear form, with associated operator $A:D(A)\subseteq\V\to\V$. Finally, let  $(\T(t))_{t\ge 0}$ be the semigroup generated in $L^2(\Omega; \C^m)$ by the operator $-A$. 
Then, the following conditions are equivalent:
\begin{enumerate}[\rm (i)]
\item
$\|\T(t)\uu\|_p\leq\|\uu\|_p$ for every $\uu\in L^2(\Omega; \C^m) \cap L^p(\Omega; \C^m)$;
\item
$P_p(\V)\subseteq \V$ and $\re \af(\uu, |\uu|^{p-1}{\rm sign}\hskip 1pt \uu)\geq 0$ for every $\uu\in \V$ such that $ |\uu|^{p-1}{\rm sign}\uu\in \V$.
\end{enumerate}
\end{thm}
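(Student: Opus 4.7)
The plan is to deduce this equivalence from the general invariance criterion for closed convex subsets of a Hilbert space under a semigroup generated by a sesquilinear form. Specifically, Ouhabaz's invariance theorem asserts that a closed convex set $C\subseteq L^2(\Omega;\C^m)$ is invariant under $(\T(t))_{t\ge 0}$ if and only if $P_C(\V)\subseteq\V$ and $\re\af(P_C\uu,\uu-P_C\uu)\ge 0$ for every $\uu\in\V$. The strategy is to apply this criterion with $C=C_p$ and to translate the second condition into the explicit form appearing in (ii).

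First I would reduce the contractivity estimate (i) to the invariance of $C_p$ under $(\T(t))_{t\ge 0}$. One implication is immediate from the definition of $C_p$. For the converse, given a nonzero $\uu\in L^2(\Omega;\C^m)\cap L^p(\Omega;\C^m)$, the rescaled element $\uu/\|\uu\|_p$ lies in $C_p$, and linearity of $\T(t)$ together with the invariance yields $\|\T(t)\uu\|_p\le\|\uu\|_p$.

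Next I would make the projection $P_p$ explicit. If $\|\uu\|_p\le 1$ then $P_p\uu=\uu$, so Ouhabaz's inequality is trivial. If $\|\uu\|_p>1$, applying Lagrange multipliers to the minimization problem $\min_{\vv\in C_p}\|\uu-\vv\|_2^2$ produces, almost everywhere, the pointwise relation
\begin{equation*}
\uu-P_p\uu=c\,|P_p\uu|^{p-1}{\rm sign}\,(P_p\uu)
\end{equation*}
for some constant $c>0$ (the multiplier associated with the constraint $\int|\vv|^p=1$). Writing $\ww=P_p\uu$, the inequality $\re\af(P_p\uu,\uu-P_p\uu)\ge 0$ becomes $\re\af(\ww,|\ww|^{p-1}{\rm sign}\,\ww)\ge 0$, and the requirement $|\ww|^{p-1}{\rm sign}\,\ww\in\V$ holds automatically because this vector is a positive multiple of $\uu-\ww\in\V$.

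Finally, I would bridge the gap between sign conditions phrased on projections $\ww=P_p\uu$ and the condition in (ii), which asks the inequality for arbitrary $\uu\in\V$ with $|\uu|^{p-1}{\rm sign}\,\uu\in\V$. The key remark is that the map $\uu\mapsto\af(\uu,|\uu|^{p-1}{\rm sign}\,\uu)$ is positively $p$-homogeneous, so the sign condition is stable under positive rescaling; conversely, given $\vv\in\V$ with $|\vv|^{p-1}{\rm sign}\,\vv\in\V$, one may rescale $\vv$ to the boundary $\{\|\cdot\|_p=1\}$ and form $\uu=\vv+|\vv|^{p-1}{\rm sign}\,\vv\in\V$, which then satisfies $P_p\uu=\vv$ and realizes $\vv$ as a projection. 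The main obstacle I anticipate is the measurable-selection step underlying the explicit formula for $P_p$ in the vector-valued setting: one must solve the pointwise Euler--Lagrange condition in $\C^m$, verify measurability of the resulting selection, and confirm via a careful pointwise convexity and uniqueness argument that it genuinely coincides with the $L^2$-orthogonal projection onto $C_p$.
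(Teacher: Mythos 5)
Your proposal follows exactly the route of the sources the paper cites for this statement (\cite{Nit} and \cite{ATS}); the paper itself offers no proof. The outline is correct: reducing (i) to invariance of $C_p$, invoking the invariance criterion for closed convex sets (in its quasi-accretive version, since $\af$ is only assumed elliptic rather than accretive), the formula $\uu-P_p\uu=c\,|P_p\uu|^{p-1}{\rm sign}\hskip 1pt(P_p\uu)$, and the rescaling/homogeneity argument identifying every admissible $\vv$ as a projection are all as in those references. The single step you leave open --- the rigorous derivation of the projection formula for arbitrary $\uu\in L^2(\Omega;\C^m)$ (not only $\uu\in L^p$ with $\|\uu\|_p>1$), including the pointwise solvability and measurability issues in $\C^m$ --- is precisely the technical content of \cite{Nit} and of the vector-valued extension in \cite{ATS}, so the gap you flag is genuine but known to be fillable.
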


\begin{rmk}
\label{rem-2.4}
{\rm We point out that a slight modification to the Nittka's Criterion allows to characterize the $L^p$-quasi contractivity of the semigroup $(\T(t))_{t\ge 0}$ in terms of the conditions stated in Theorem \ref{Nittka}(ii) just replacing the sesquilinear form $\af(\uu,\vv)$ with the form $\af(\uu,\vv)+\omega (\uu,\vv)$ for some $\omega \in \R$. In this case, the operator associated to $\af+\omega$ is $-A-\omega$ and the semigroup $(\T(t))_{t\ge 0}$ is replaced by $(e^{-\omega t}\T(t))_{t\ge 0}$, whose contractivity in $L^p$ is equivalent to the $L^p$-quasi contractiveness of $(\T(t))_{t\ge 0}$.
}
\end{rmk}

We now introduce the standing assumptions on the coefficients of the operator $\A$, defined in \eqref{op_ke-intro}.

\begin{hyp}\label{base}
\begin{enumerate}[\rm (i)]
\item 
the entries $q_{hk}$ $(h,k=1,\ldots,d)$ of the 
matrix-valued function $Q$ belong to $L^\infty_{{\rm loc}}(\Omega)$; for  almost every  $x\in\Omega$,  $Q(x)$ is symmetric and the function $\lambda_Q:\Omega\to ]0,\infty[$ has   strictly positive essential infimum on compact subsets of $\Omega$;
\item
the matrix-valued function $V$ belongs to $L^{\infty}_{{\rm loc}}(\Omega; \R^{m\times m})$ and $\operatorname{ essinf}_{x\in\Omega}{\lambda_{V_S}(x)}=:v_0>0$. 
Further, there exists a positive constant $c_0$ such that 
\begin{eqnarray*}
 |{\rm Im}(V\xi, \xi)|\le c_0 {\rm Re}(V\xi,\xi),\quad\;\,\end{eqnarray*}
 for every $\xi\in\C^m$ and a.e. in $\Omega$;

\item 
the matrix-valued functions $A^{hk}$ $(h,k=1,\dots,d)$ belong to $L^\infty_{{\rm loc}}(\Omega; \R^{m\times m} )$ and  there exists a positive constant $\kappa_A$ such that
\begin{align}
\label{realLegendrefunc}
0\leq  &{\rm Re}\sum_{h,k=1}^d (A^{hk}\theta^k, \theta^h) \leq \kappa_A\mathcal{Q}(\theta^1,\ldots,\theta^d),\\
 \label{comLeg}
 &\bigg | {\rm Im}\sum_{h,k=1}^d (A^{hk}\theta^k, \theta^h)\bigg |\leq \kappa_A \mathcal{Q}(\theta^1,\ldots,\theta^d)
  \end{align}
  for every $\theta^1,\ldots,\theta^d\in \Cm$ and almost everywhere in $\Om$;
\item the matrices  $B^h, C^h$, $h=1,\dots,d$ and $W$ belong to $L^\infty_{\rm loc}(\Om;\R^{m\times m})$ and
there exist non negative  constants $\kappa_B, \kappa_C, \kappa_W$ and positive constants $\gamma$ and $C_\gamma$  such that for every $\theta^1,\ldots,\theta^d, \xi, \eta\in \Cm$:
\begin{align}\label{est_1}&\bigg|\sum_{h=1}^d (B^h \theta^h, \eta) \bigg| \leq \kappa_B\big(\mathcal{Q}(\theta^1,\ldots,\theta^d)\big )^\frac{1}{2}(\gamma  (\Vs\eta, \eta) + C_\gamma |\eta|^2)^{\frac{1}{2}},
\end{align}
\begin{align}\label{est_2}
&\bigg|\sum_{h=1}^d (C^h \eta,\theta^h) \bigg| \leq  \kappa_C\big(\mathcal{Q}(\theta^1,\ldots,\theta^d)\big )^\frac{1}{2}(\gamma(\Vs\eta, \eta) + C_\gamma |\eta|^2)^{\frac{1}{2}},
\end{align}
\begin{align}\label{est_3}
&|(W\xi, \eta) | \leq \kappa_W(\gamma (\Vs\xi, \xi) + C_\gamma |\xi|^2)^\frac 1 2(\gamma (V_S\eta, \eta) + C_\gamma|\eta|^2)^\frac{1}{2}
\end{align}
almost everywhere in $\Om$;
\item $K:=4\left(\frac{1}{\gamma}-\kappa_W\right)-(\kappa_B+\kappa_C)^2>0$.
\end{enumerate}
\end{hyp}

\begin{rmk}[Proposition A.1 \& Corollary A.2 in \cite{AngLorMan}]
$\;$\\[-4mm]
\rm{
\begin{enumerate}[\rm (i)]
\item 
Under Hypothesis \ref{base}(ii), it holds that
\begin{equation}
|(V\xi, \eta)|\le (1+c_0) (\Vs\xi, \xi)^{\frac{1}{2}} (\Vs\eta, \eta)^{\frac{1}{2}}
\label{cond-c0}
\end{equation}
almost everywhere in $\Omega$ and for every $\xi, \eta \in \C^m$. 
\item 
Estimates \eqref{realLegendrefunc} and \eqref{comLeg} yield the inequality 
\begin{equation}\label{CS}
\bigg |\sum_{h,k=1}^d (A^{hk}\theta^k, \eta^h) \bigg | \leq 2 \kappa_A\mathcal{Q}(\theta^1,\ldots,\theta^d)^{\frac{1}{2}}
\mathcal{Q}(\eta^1,\ldots,\eta^d)^{\frac{1}{2}},
\end{equation}
which holds true almost everywhere in $\Omega$ and for  every $\theta^1,\dots,  \theta^d$, $\eta^1,\ldots, \eta^d \in \C^m$.
\item 
Condition \eqref{est_3} on
the matrix $W$ is satisfied, for instance, if 
\begin{eqnarray*} 
0\leq \re(W\eta, \eta)\leq \frac{\kappa_W}{2}( \gamma(\Vs \eta, \eta) + C_\gamma|\eta|^2)
\end{eqnarray*}
and 
\begin{eqnarray*} 
\big|{\rm Im}\hskip 1pt (W\eta, \eta)\big| \leq \frac{\kappa_W}{2} (\gamma(\Vs\eta, \eta) + C_\gamma|\eta|^2)
\end{eqnarray*}
almost everywhere in $\Omega$ and for every $\eta\in \C^m$.
\end{enumerate}
}
\end{rmk}
\medskip

Now, we introduce the set $D_{Q,V}$ consisting of all functions 
\begin{eqnarray*} 
D_{Q,V}=\{\uu \in W_{\rm loc}^{1,2}(\Om; \mathbb{C}^m): \uu,\ \Vs^\frac 1 2 \uu,\  Q^{\frac{1}{2}}D_j \uu\in L^2(\Om;\Cm) \text{ for every }j=1,\dots,d   \},
\end{eqnarray*}
endowed with the inner product
\begin{eqnarray*} 
\langle \uu, \vv\rangle_{Q, V}=\int_{\Om} \bigg [\sum_{i=1}^m (Q\nabla u_i, \nabla v_i) +(\Vs\uu,\vv) \bigg ]dx, \qquad\;\,  \uu, \vv\in D_{Q,V},
\end{eqnarray*}
and the corresponding norm 
$\|\uu\|_{Q,V} =\langle\uu,\uu\rangle_{Q,V}^{\frac{1}{2}}$ for every $\uu\in D_{Q,V}$.

\begin{lemm}
The space $D_{Q,V}$, endowed with inner product $\langle \cdot, \cdot\rangle_{Q,V}$, is a Hilbert space.
\end{lemm}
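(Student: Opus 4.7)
The plan is to first verify that $\langle \cdot,\cdot\rangle_{Q,V}$ is a genuine inner product, and then to prove completeness by exploiting the local non-degeneracy of $Q$ and $V_S$ to extract a candidate limit in $L^2$ and in $W^{1,2}_{\rm loc}(\Omega;\C^m)$, which one identifies with the limits of the individual pieces $V_S^{1/2}\uu_n$ and $Q^{1/2}\nabla u_{n,i}$ in $L^2$.

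Sesquilinearity and Hermitian symmetry of $\langle \cdot,\cdot\rangle_{Q,V}$ are immediate from the symmetry of the real matrices $Q$ and $V_S$. Positive definiteness follows at once from Hypothesis \ref{base}(ii): since $\operatorname*{essinf}_\Omega\lambda_{V_S}\ge v_0>0$, the inequality $(V_S\uu,\uu)\ge v_0|\uu|^2$ gives
\begin{equation*}
\|\uu\|_{Q,V}^2 \ge v_0\int_\Omega |\uu|^2\,dx,
\end{equation*}
so $\|\uu\|_{Q,V}=0$ forces $\uu={\bf 0}$ a.e.

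For completeness, let $(\uu_n)\subset D_{Q,V}$ be Cauchy with respect to $\|\cdot\|_{Q,V}$. By the inequality just mentioned, $(\uu_n)$ is Cauchy in $L^2(\Omega;\C^m)$, so it converges there to some $\uu$. In addition, the sequences $(V_S^{1/2}\uu_n)$ and $(Q^{1/2}\nabla u_{n,i})_{i=1,\dots,m}$ are Cauchy in $L^2(\Omega;\C^m)$ and $L^2(\Omega;\C^d)$ respectively, hence they converge in these spaces to some $\ww$ and $\g_i$. It remains to identify these limits with $V_S^{1/2}\uu$ and $Q^{1/2}\nabla u_i$, and to show that $\uu\in W^{1,2}_{\rm loc}(\Omega;\C^m)$.

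Fix a compact set $K\subset\Omega$. By Hypothesis \ref{base}(i)--(ii), $Q$ and $V_S$ are bounded on $K$, and $\lambda_Q, \lambda_{V_S}$ are bounded from below on $K$ by strictly positive constants $c_K, c_K'$. Thus on $K$ one has $|\nabla u_{n,i}|^2\le c_K^{-1}(Q\nabla u_{n,i},\nabla u_{n,i})$ for each $i=1,\dots,m$, which shows that $(\uu_n)$ is Cauchy in $W^{1,2}(K;\C^m)$; its limit must coincide with the $L^2$-limit $\uu$, whence $\uu\in W^{1,2}_{\rm loc}(\Omega;\C^m)$ and $\nabla u_{n,i}\to \nabla u_i$ in $L^2(K;\C^d)$. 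Local boundedness of $Q^{1/2}$ and $V_S^{1/2}$ then yields $Q^{1/2}\nabla u_{n,i}\to Q^{1/2}\nabla u_i$ and $V_S^{1/2}\uu_n\to V_S^{1/2}\uu$ in $L^2(K)$, so $\g_i=Q^{1/2}\nabla u_i$ and $\ww = V_S^{1/2}\uu$ a.e.\ on $K$, and, by the arbitrariness of $K$, almost everywhere on $\Omega$. Consequently $V_S^{1/2}\uu,\,Q^{1/2}\nabla u_i\in L^2(\Omega;\C^m)$, i.e.\ $\uu\in D_{Q,V}$, and $\uu_n\to\uu$ with respect to $\|\cdot\|_{Q,V}$.

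The only mildly delicate point is the identification of the $L^2$-limits $\ww$ and $\g_i$ with $V_S^{1/2}\uu$ and $Q^{1/2}\nabla u_i$; this is precisely where the assumption that $\lambda_Q$ and $\lambda_{V_S}$ are locally bounded away from $0$, together with the local boundedness of $Q$ and $V_S$, is essential, since it allows one to pass from the $\|\cdot\|_{Q,V}$-Cauchy condition to local $W^{1,2}$-control, and then to recognize the weighted limits on compact sets.
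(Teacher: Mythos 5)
Your proof is correct and follows essentially the same route as the paper: use the local boundedness and local strict positivity of $\lambda_Q$ (and the global bound $\lambda_{V_S}\ge v_0$) to pass from the $\|\cdot\|_{Q,V}$-Cauchy condition to Cauchyness in $L^2(\Omega;\C^m)$ and $W^{1,2}_{\rm loc}(\Omega;\C^m)$, then identify the $L^2$-limits of $V_S^{1/2}\uu_n$ and $Q^{1/2}\nabla u_{n,i}$ with the corresponding weighted quantities of the limit $\uu$. The only cosmetic difference is that you identify the limits via $L^2(K)$-convergence on compacts while the paper extracts an a.e.\ convergent subsequence; both are standard and equivalent here.
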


\begin{proof}
Let $(\uu_n)$ be a Cauchy sequence in $D_{Q,V}$, which means that $(\Vs^{\frac{1}{2}}\uu_n)$ and $(Q^{\frac{1}{2}}D_j\uu_n)$ ($j=1,\ldots,d$) are Cauchy sequences in $L^2(\Omega;\C^m)$. 
Thanks to Hypotheses \ref{base}, $(\uu_n)$ is a Cauchy sequence in $W_{{\rm loc}}^{1,2}(\Om; \C^m)$ and in $L^2(\Om; \C^m)$. Hence,  there exists a function $\uu \in W_{\rm loc}^{1,2}(\Om;\Cm)\cap L^2(\Omega;\C^m)$ such that $\uu_n$ converges to $\uu$ in $L^2(\Omega;\C^m)$, and,   up to a subsequence,  $Q^{\frac{1}{2}}D_j\uu_n$ converges  to  $Q^{\frac{1}{2}}D_j\uu$, for every $j=1,\ldots,d$, and $\Vs^\frac 1 2\uu_n$  converges to   $\Vs^\frac 1 2 \uu$ almost everywhere in $\Om$. Since $(Q^{\frac{1}{2}}D_j\uu_n)$ is a Cauchy sequence in $L^2(\Om;\C^m)$ for every $j=1,\ldots,d$, it converges to $Q^{\frac{1}{2}}D_j\uu$ in $L^2(\Omega;\C^m)$ and, analogously, 
$\Vs^\frac 1 2\uu_n$
converges to $\Vs^\frac 1 2\uu$ in $L^2(\Omega;\Cm)$. Thus, we conclude that $\uu_n$ converges to $\uu$ in $D_{Q,V}$, which is a Hilbert space.
\end{proof}

Now, we fix the space ${\mathcal V}$.

\begin{hyp}\label{V}
\begin{enumerate}[\rm (i)]
\item
$\mathcal V$ is a closed subspace of $D_{Q,V}$ which contains $C_c^\infty(\Omega;\C^m)$;
\item 
for every $\uu\in \V$ the function $(1\wedge |\uu|){\rm sign }\,\uu$ belongs to $\V$.
\end{enumerate}
\end{hyp}

\begin{lemm}\label{ELI}
If $\mathcal{V}=\overline{C_c^\infty(\Omega; \Cm)}^{D_{Q,V}}$, then 
$(|\uu|\wedge 1){\rm sign}\hskip 1pt\uu$ belongs to ${\mathcal V}$ for every $\uu\in{\mathcal V}$.
\end{lemm}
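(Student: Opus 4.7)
The plan is a density-and-continuity argument based on the radial truncation $F(\xi):=(|\xi|\wedge 1){\rm sign}\,\xi=\xi/\max(|\xi|,1)$, i.e., the projection of $\C^m\simeq\R^{2m}$ onto its closed unit ball. Viewed on $\R^{2m}$, $F$ is globally $1$-Lipschitz and bounded by $1$, and its almost-everywhere differential $M:=DF$ satisfies $\|M\|_{\rm op}\le 1$ (explicitly, $M\equiv I$ on $\{|\xi|<1\}$ and $M(\xi)=|\xi|^{-1}(I-\widehat\xi\otimes\widehat\xi)$ on $\{|\xi|>1\}$, with $\widehat\xi=\xi/|\xi|$). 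Given $\uu\in\V$, I would fix a sequence $(\uu_n)\subset C_c^\infty(\Om;\Cm)$ with $\uu_n\to\uu$ in $D_{Q,V}$; it then suffices to prove \textbf{(a)} $F(\uu_n)\in\V$ for every $n$, and \textbf{(b)} $F(\uu_n)\to F(\uu)$ in $D_{Q,V}$, since closedness of $\V$ in $D_{Q,V}$ will yield $F(\uu)\in\V$.

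For (a), $F(\uu_n)$ is Lipschitz, bounded by $1$, and supported in $K_n:={\rm supp}\,\uu_n$, hence belongs to $W^{1,2}_0(\Om;\Cm)\cap L^\infty$. A standard mollification produces smooth compactly supported approximants whose supports remain inside a fixed compact subset $K_n^*\subset\Om$; by Hypothesis \ref{base}(i)-(ii), $Q$ and $\Vs$ are bounded on $K_n^*$, so the $D_{Q,V}$-norm is controlled by the $W^{1,2}$-norm on functions supported there, and $W^{1,2}$-convergence of the mollifications upgrades to $D_{Q,V}$-convergence. Thus $F(\uu_n)\in\overline{C_c^\infty(\Om;\Cm)}^{D_{Q,V}}=\V$.

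Step (b) relies on the Sobolev chain rule $\partial_kF(\uu)=M(\uu)\partial_k\uu$ a.e., which remains valid despite the discontinuity of $M$ across $\{|\xi|=1\}$: Stampacchia's theorem applied to $|\uu|^2\in W^{1,2}_{\rm loc}$ gives $(\uu,\partial_k\uu)_{\R^{2m}}=0$ a.e.\ on $\{|\uu|=1\}$, which makes the two one-sided formulas for $M(\uu)\partial_k\uu$ agree. Exploiting the symmetry of $Q$ to rewrite $\sum_i(Q\nabla u_i,\nabla u_i)$ as the real quadratic form $\sum_{h,k}q_{hk}(\partial_h\uu,\partial_k\uu)_{\R^{2m}}$, combined with $M^TM\le I$, I would obtain the pointwise bound $\sum_i(Q\nabla F(\uu)_i,\nabla F(\uu)_i)\le\sum_i(Q\nabla u_i,\nabla u_i)$, and an elementary case analysis gives $(\Vs F(\uu),F(\uu))\le(\Vs\uu,\uu)$. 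After extracting an a.e.-convergent subsequence of $(\uu_n)$ (justified by $L^2$-convergence, which uses $v_0>0$), the splitting
\begin{equation*}
\partial_k F(\uu_n)-\partial_k F(\uu)=M(\uu_n)(\partial_k\uu_n-\partial_k\uu)+(M(\uu_n)-M(\uu))\partial_k\uu
\end{equation*}
reduces the $Q$-weighted convergence to dominated convergence: the first summand has $Q$-weighted $L^2$-norm dominated by $\sum_i(Q\nabla(u_{n,i}-u_i),\nabla(u_{n,i}-u_i))^{1/2}\to 0$ via $\|M\|_{\rm op}\le 1$, while the second tends to zero a.e.\ because $M$ is continuous off $\{|\xi|=1\}$ and on $\{|\uu|=1\}$ its jump lies along $\widehat\uu$, to which $\partial_k\uu$ is orthogonal by the Stampacchia identity; being dominated by a multiple of $\sum_i(Q\nabla u_i,\nabla u_i)^{1/2}\in L^2$, DCT applies. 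A Vitali-type dominated convergence argument handles the $\Vs$-weighted part analogously (using that $(\Vs F(\uu_n),F(\uu_n))\le(\Vs\uu_n,\uu_n)$ converges in $L^1$), and a subsequence-of-subsequence trick lifts the conclusion to the whole sequence.

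The main obstacle I anticipate is the lack of smoothness of $F$ across the unit sphere, which blocks both the naive chain rule and the naive DCT. The technical identity that resolves both difficulties is the Stampacchia-type orthogonality $(\uu,\partial_k\uu)_{\R^{2m}}=0$ a.e.\ on $\{|\uu|=1\}$: it places the jump direction of $M$ inside the kernel of $\partial_k\uu$, so the discontinuity is invisible to the derivative of the composition.
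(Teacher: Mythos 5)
Your proposal is correct, and its overall architecture coincides with the paper's proof: approximate $\uu$ by a sequence $(\uu_n)\subset C_c^\infty(\Om;\Cm)$, observe that the truncation of each $\uu_n$ lies in $W^{1,2}_0(\Omega';\Cm)$ for some $\Omega'\Subset\Omega$ and hence in $\V$ (via the same mollification-plus-local-boundedness characterization of $\V$), and then pass to the limit in $D_{Q,V}$. The difference lies in how the limit is justified. The paper extracts a subsequence along which $V_S^{1/2}\uu_n$ and $Q^{1/2}\nabla\uu_n$ converge a.e.\ and are dominated by a fixed $\psi\in L^2(\Om)$, invokes the explicit derivative formula of Lemma \ref{lemma-A1}, and applies dominated convergence directly to $\nabla\vv_n$; you instead exploit the $1$-Lipschitz structure of the truncation $F$ and split $\partial_kF(\uu_n)-\partial_kF(\uu)$ into a part controlled by $\|M\|_{\rm op}\le1$ together with the $L^2$-convergence of $Q^{1/2}\nabla(\uu_n-\uu)$, and a part $(M(\uu_n)-M(\uu))\partial_k\uu$ handled by dominated convergence with a majorant depending only on $\uu$. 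Your route buys two things: it never needs a dominating function for $\nabla\uu_n$, and it confronts head-on the only genuinely delicate point, namely the behaviour on $\{|\uu|=1\}$, where the a.e.\ limit formula for $\nabla\vv_n$ is ambiguous unless one invokes the Stampacchia identity $(\uu,\partial_k\uu)_{\R^{2m}}=0$ a.e.\ there — a point the paper's proof passes over (its displayed limit formula only distinguishes $\{|\uu|>1\}$ from $\{|\uu|\le1\}$). The paper's route, in exchange, is shorter because all the pointwise differentiation work is outsourced to Lemma \ref{lemma-A1}. Both arguments are sound.
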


\begin{proof}
We begin by proving that
${\mathcal V}= \overline{\bigcup\{W_0^{1,2}(\Omega',\Cm): \Omega' \text{ open,} \ \Omega'\Subset \Omega\}}^{D_{Q,V}}$.
For this purpose, it suffices to show that $W_0^{1,2}(\Omega', \Cm)\subseteq \overline{C_c^\infty(\Omega; \Cm)}^{D_{Q,V}}$ for every $\Omega'\Subset\Omega$. So, fix $\f\in W_0^{1,2}(\Omega',\Cm)$  and let $(\rho_n)$ be a sequence of mollifiers such that $\rho_n(x)=0$ if $|x|\geq \frac 1 n$, for every $n\in\N$. 
Then, the convolution $\f*\rho_n$ tends to $\f$ in $W^{1,2}(\Omega;\Cm)$ as $n$ tends to $\infty$ and there exists $n_0\in\N$ such that for $n\geq n_0$ 
\begin{eqnarray*} 
\text{supp}(\f*\rho_n)\subseteq \overline{\Omega'+ B(0,n^{-1})}\subseteq \overline{\Omega'+ B(0,n_0^{-1})}\subseteq \Omega, 
\end{eqnarray*}
so that $\f*\rho_n\in C_c^\infty(\Omega;\Cm)$.
Finally, observe that, thanks to Hypotheses \ref{base}, the convergence  of $(\f*\rho_n)_n$ with respect to $\|\cdot\|_{Q,V}$ is equivalent to the convergence in the $W^{1,2}$-norm on $\overline{\Omega'+ B(0,n_0^{-1})}$, since
$Q$ and $V$ are locally bounded in $\Omega$ and $\lambda_Q$, $\lambda_V$ are locally strictly positive functions. Thus, $\f\in {\mathcal V}$.

To conclude the proof, we fix $\uu\in  {\mathcal V}$ and a sequence $(\uu_n)\subset C_c^\infty(\Om; \Cm)$, which converges to $\uu$ in $D_{Q,V}$ and  $V_S^{\frac 1 2}\uu_n$,
$Q^{\frac 1 2}\nabla\uu_n$ converge almost everywhere to
$V_S^{\frac 1 2}\uu_n$, 
$Q^{\frac 1 2}\nabla\uu_n$, respectively, as $n$ tends to $\infty$. Without loss of generality, we can assume that $|V_S^{\frac 1 2}\uu_n|\leq \psi$ and $|Q^\frac 1 2 \nabla \uu_n| \leq \psi$ for some
$\psi\in L^2(\Om)$.

By Lemma \ref{lemma-A1} and taking into account that $\uu_n$ is compactly supported in $\Omega$ for every $n\in\N$, we infer that the function $\vv_n=(|\uu_n|\wedge 1)\text{sign}\hskip 1pt\uu_n$ is in $W_0^{1,2}(\Omega', \Cm)$ for some bounded open subset $\Om'$ with closure contained in $\Om$. Due to the characterization of ${\mathcal V}$ proved above, for every $n\in\N$, $\vv_n$ belongs to ${\mathcal V}$ and 
\eqref{form-A1} guarantees that
\begin{eqnarray*} 
D_k\vv_n= 
\begin{cases}
\displaystyle
-\frac{\uu_n}{|\uu_n|^2}D_k |\uu_n| + \frac{D_k\uu_n}{|\uu_n|}, \qquad  & |\uu_n|>1,\\[3mm]
D_k\uu_n, \qquad &|\uu_n|\leq 1,
\end{cases}
\end{eqnarray*}
for every $k=1,\ldots,d$.

Let us prove that $(\vv_{n})$ converges to $(|\uu|\wedge 1)\text{sign}\hskip 1pt\uu$ in $D_{Q,V}$. This will imply that $(|\uu|\wedge 1)\text{sign}\hskip 1pt\uu\in\mathcal{V}$.
Since $\lambda_V$ and $\lambda_Q$ are positive almost everywhere in $\Omega$, it follows that $\uu_n$ converges to $\uu$ almost everywhere in $\Om$ and  $\nabla\uu_n$ converges to $\nabla\uu$ almost everywhere in $\Om$ as $n$ tends to $\infty$.
Consequently, $(\vv_n)$ converges  to $(|\uu|\wedge 1)\text{sign}\hskip 1pt\uu$ almost everywhere in $\Om$ and 
\begin{eqnarray*} 
D_k\vv_n
\rightarrow
\begin{cases} 
\displaystyle
-\frac{\uu }{|\uu|^2}D_k |\uu| + \frac{D_k\uu}{|\uu|}, \qquad  & |\uu|>1,\\[1mm]
D_k\uu, \qquad &|\uu|\leq 1,
\end{cases}
\end{eqnarray*}
for every $k=1,\ldots,d$.

Finally, using the estimates $|V_S^{\frac 1 2 }\vv_n|\leq \psi$  and $|Q^\frac 1 2\nabla \vv_n|\leq \psi$,  we can apply Lebesgue's dominated convergence theorem and get that $(\vv_n)$  converges to $(|\uu|\wedge 1)\text{sign}\hskip 1pt\uu$ in $D_{Q,V}$.
\end{proof}
 
\begin{prop}\label{pcontr}
Under Hypotheses $\ref{base}(i)$-$(ii)$ and $\ref{V}$,
for every $p\in ]1,\infty[$, the projection $P_p$ in Theorem $\ref{Nittka}$ leaves the space $\V$ invariant.
\end{prop}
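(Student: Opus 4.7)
The plan is to apply Nittka's criterion (Theorem \ref{Nittka}, implication (i) $\Rightarrow$ (ii)) to the symmetric sesquilinear form
\begin{equation*}
\mathfrak{a}^0(\uu,\vv) = \int_\Omega \left[\sum_{i=1}^m (Q\nabla u_i, \nabla v_i) + (V_S\uu, \vv)\right]dx, \qquad \uu,\vv \in \mathcal{V},
\end{equation*}
introduced in the introduction. Under Hypotheses \ref{base}(i)--(ii), the form $\mathfrak{a}^0$ is continuous on $\mathcal{V}$ and coercive, since $\mathfrak{a}^0(\uu,\uu) = \|\uu\|_{Q,V}^2 \ge v_0\|\uu\|_2^2$, so Theorem \ref{TH:coerc} produces an associated contractive $C_0$-semigroup $(S(t))_{t\ge 0}$ on $L^2(\Omega;\C^m)$, which is self-adjoint because $\mathfrak{a}^0$ is symmetric. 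It therefore suffices to prove that $(S(t))_{t\ge 0}$ is $L^p$-contractive for every $p \in (1,\infty)$: Nittka's criterion then yields precisely $P_p(\mathcal{V}) \subseteq \mathcal{V}$ as part of condition (ii).

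The route to $L^p$-contractivity goes through $L^\infty$-contractivity via an Ouhabaz--Beurling--Deny-type argument. Hypothesis \ref{V}(ii) already provides $\vv := (1 \wedge |\uu|){\rm sign}\,\uu \in \mathcal{V}$ whenever $\uu \in \mathcal{V}$, so what remains is to verify that $\re \mathfrak{a}^0(\uu, \uu - \vv) \ge 0$ for every $\uu \in \mathcal{V}$. On $\{|\uu| \le 1\}$ one has $\uu = \vv$ and the integrand vanishes pointwise. On $\{|\uu| > 1\}$ one has $\uu - \vv = (1 - |\uu|^{-1})\uu$; the potential contribution then reduces to $(1 - |\uu|^{-1})(V_S\uu, \uu) \ge 0$ since $V_S \ge v_0 I$, while the diffusion contribution $\sum_i \re(Q\nabla u_i, \nabla(u_i - v_i))$ is handled by differentiating the truncation $\vv$ componentwise through the chain rule of Lemma \ref{lemma-A1} and invoking the positive semi-definiteness of $Q$. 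Once $L^\infty$-contractivity is established, self-adjointness gives $L^1$-contractivity by duality, and Riesz--Thorin interpolation with $L^2$-contractivity delivers $L^p$-contractivity of $(S(t))_{t\ge 0}$ for every $p \in [1,\infty]$, concluding the proof.

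The main obstacle I anticipate is the pointwise verification of the Ouhabaz condition for the diffusion term on $\{|\uu|>1\}$: the truncation $\uu/|\uu|$ of a $\C^m$-valued function is non-smooth, so its distributional derivatives must be computed carefully via Lemma \ref{lemma-A1}, and the resulting quadratic expression in $\nabla u_i$ must be bounded from below using a Cauchy--Schwarz estimate adapted to the positive semidefinite matrix $Q$, in the spirit of the classical scalar Beurling--Deny computation transferred to the vector-valued setting.
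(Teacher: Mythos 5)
Your proposal follows essentially the same route as the paper: introduce the symmetric form $\mathfrak{a}^0$, prove $L^\infty$-contractivity of its semigroup via the Ouhabaz-type condition $\re\mathfrak{a}^0(\uu,\uu-(1\wedge|\uu|)\operatorname{sign}\uu)\ge 0$ using Lemma \ref{lemma-A1}, pass to all $p\in\,]1,\infty[$ by duality and interpolation, and then read off $P_p(\V)\subseteq\V$ from Nittka's criterion. The only cosmetic difference is that the final pointwise verification you flag as the main obstacle is in fact easier than you anticipate: after applying the chain rule and \eqref{star1}, the diffusion contribution on $\{|\uu|>1\}$ collapses to $(1-|\uu|^{-1})(Q\nabla\uu,\nabla\uu)+|\uu|^{-1}(Q\nabla|\uu|,\nabla|\uu|)$, a sum of two manifestly nonnegative terms, so no Cauchy--Schwarz argument is needed.
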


\begin{proof} 
We introduce the sesquilinear form $\mathfrak{a}^0$ on $\V\times\V$, defined by 
\begin{align*}
\mathfrak a^0 (\uu,\vv)= \int_{\Omega} \bigg[\sum_{i=1}^m(Q\nabla u_i, \nabla v_i) + (\Vs\uu,\vv)\bigg ] dx, \qquad \uu, \vv \in\mathcal V,
\end{align*}
which is clearly densely defined since $\mathcal V$ is densely embedded in $L^2(\Omega; \C^m)$,  accretive, continuous and elliptic on $L^2(\Om;\C^m)$, since $\mathfrak a^0(\uu,\uu) = \langle \uu,\uu \rangle_{Q,V}$ for every $\uu\in\V$.
Hence, to $\af^0$  we can associate an operator $(\A^0_\V, D(\A^0_\V))$, defined as in Theorem \ref{TH:coerc},
such that $-\A^0_\V$ generates a contractive strongly continuous analytic  semigroup $(\T^0_{\V}(t))_{t\ge 0}$ on $L^2(\Om; \C^m)$.

To conclude the proof, it suffices to show that the semigroup $(\T^0_{\V}(t))_{t\ge 0}$ extends to the $L^p$-scale with a strongly continuous contractive semigroup and then apply Nittka's criterion. For this purpose, we begin by proving that the semigroup $(\T^0_{\V}(t))_{t\ge 0}$ is $L^\infty$-contractive, i.e., $\|\T^0_{\V}(t)\f\|_{\infty}\le \|\f\|_{\infty}$
for every $\f \in L^2(\Om;\Cm)\cap L^\infty(\Om;\Cm)$ and every $t \ge 0$.
By the characterization of $L^\infty$-contractivity for vector-valued semigroups proved in \cite[Theorem 2.29]{Ou}, it suffices to prove that 
\begin{eqnarray*} 
\re \mathfrak a^0(\uu, \uu-(1 \wedge |\uu|){\rm sign}\hskip 1pt\uu)\geq 0
\end{eqnarray*}
for every $\uu\in \mathcal V$.
First of all, let us observe that $\uu-(1 \wedge |\uu|){\rm sign}\hskip 1pt\uu \in\V$ by Hypothesis \ref{V}(ii). Moreover, since
\begin{eqnarray*} 
\uu-(1 \wedge |\uu|){\rm sign}\hskip 1pt\uu=(|\uu|-1)^+{\rm sign}\hskip 1pt\uu=\uu\bigg (1-\frac{1}{|\uu|}\bigg )\chi_{\{|\uu|>1\}},
\end{eqnarray*}
it follows by Lemma \ref{lemma-A1} that
\begin{eqnarray*} 
\nabla (\uu-(1 \wedge |\uu|){\rm sign}\hskip 1pt\uu)= \bigg (1-\frac{1}{|\uu|}\bigg )\chi_{\{|\uu|> 1\}}\nabla \uu -\uu\chi_{\{|\uu|> 1\}}\nabla\bigg (\frac{1}{|\uu|}\bigg ).
\end{eqnarray*}
Consequently, taking into account that 
\begin{align}
\nabla |\uu|= |\uu|^{-1}\re \sum_{i=1}^m (\nabla u_i)\overline{u_i}\chi_{\{\uu\not=0\}},
\label{star1}
\end{align}
 we deduce
\begin{align*}
\re \mathfrak a^0(\uu, (|\uu|-1)^+{\rm sign}\hskip 1pt\uu)=&\int_{|\uu|>1}\big ((Q\nabla\uu, \nabla\uu)+\re(V\uu,\uu)\big )\bigg (1-\frac{1}{|\uu|}\bigg )dx\\
&-\re \int_{|\uu|>1}\sum_{i=1}^m \overline{u_i}(Q\nabla u_i, \nabla|\uu|^{-1}) dx \\
=&\int_{|\uu|>1}\big ((Q\nabla\uu, \nabla\uu)+\re(V\uu,\uu)\big )\bigg (1-\frac{1}{|\uu|}\bigg )dx\\
&+\int_{|\uu|>1}\frac{1}{|\uu|} (Q\nabla|\uu|, \nabla|\uu|)dx\geq 0, 
\end{align*}
thanks to Hypotheses \ref{base}. Thus, $(\T^0_{\V}(t))_{t\ge 0}$ is a $L^\infty$-contractive semigroup and we can extend it to a semigroup on $L^p(\Om;\C^m)$ for $p\in [2,\infty[\,\cup\{\infty\}$. Note that the form $\mathfrak a^0$ is symmetric. Hence, the same conclusion holds for $p\in [1,2[$ since the adjoint semigroup $((\T^0_{\V}(t))^*)_{t\ge 0}$ is $L^\infty$-contractive too. 
We have so proved that the semigroup $(\T^0_{\V}(t))_{t\ge 0}$ extends to a strongly continuous semigroup of contractions on $L^p(\Om;\C^m)$ for every  $p\in [1,\infty[$\,.
\end{proof}

\section{$L^p$-quasicontractivity for semigroups associated to strongly coupled elliptic operators}
\label{sect-3}

To begin with, we prove that we can associate a semigroup of bounded operators with the form $\mathfrak{a}$  defined by
\begin{align}
\af(\uu,\vv)
=\int_{\Omega}\bigg [&\sum_{h,k=1}^d (Q^{hk}D_k \uu, D_h\vv)\!+\!\sum_{h=1}^d[( B^h D_h \uu, \vv)\!+\! ( C^h\uu, D_h\vv)]\!+\!((V\!+\!W)\uu,\vv)\bigg ]dx
\label{form}
\end{align}
for every $\uu,\vv\in \mathcal V$, where $Q^{hk}:=q_{hk} I + A^{hk}$ for every $h,k=1,\ldots,d$.

\begin{prop}
Under Hypotheses $\ref{base}$ and $\ref{V}$, we can associate a strongly continuous analytic semigroup $(\T_2(t))_{t\ge 0}$ with the form $\mathfrak{a}$ in $L^2(\Omega;\C^m)$, which satisfies the estimate
\begin{equation} 
\|\T_2(t)\|_{\mathcal{L}(L^2(\Om;\C^m))}\leq e^{C_\gamma \left(\kappa_W+\frac{(\kappa_B+\kappa_C)^2}{4}\right)t}, \qquad \;\, t\in [0,\infty[\,.
\label{leggi}
\end{equation}
\end{prop}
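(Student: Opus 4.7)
The strategy is to apply Theorem \ref{TH:coerc} to the form $\mathfrak{a}$ on $\mathcal V\times\mathcal V$. Since $\mathcal V$ contains $C_c^\infty(\Omega;\mathbb{C}^m)$ (Hypothesis \ref{V}(i)) it is densely embedded in $L^2(\Omega;\mathbb{C}^m)$, so what is needed is (a) continuity of $\mathfrak a$ in the norm $\|\cdot\|_{Q,V}$ of $D_{Q,V}\supseteq \mathcal V$ and (b) an ellipticity estimate with an explicit shift $\omega$ whose infimum recovers the constant appearing in \eqref{leggi}.

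For continuity I would estimate each of the four blocks of $\mathfrak a$ separately. The principal part is split as $Q^{hk}=q_{hk}I+A^{hk}$; the scalar piece gives $\int\mathcal Q(D_1\uu,\dots,D_d\uu)^{1/2}\mathcal Q(D_1\vv,\dots,D_d\vv)^{1/2}\,dx$ by Cauchy--Schwarz, and the $A^{hk}$-piece is controlled by the same quantity via \eqref{CS}. The $B^h$ and $C^h$ blocks are bounded through \eqref{est_1} and \eqref{est_2}, after applying Cauchy--Schwarz in $L^2$, by products of $(\int\mathcal Q)^{1/2}$ and $(\int[\gamma(\Vs\vv,\vv)+C_\gamma|\vv|^2])^{1/2}$ (and symmetrically). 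Finally \eqref{cond-c0} handles the $V$ term and \eqref{est_3} the $W$ term. All these quantities are dominated by $\|\uu\|_{Q,V}\|\vv\|_{Q,V}$, up to a constant depending on $\kappa_A,\kappa_B,\kappa_C,\kappa_W,c_0,\gamma,C_\gamma$, proving the needed continuity.

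The core of the argument is the ellipticity/coercivity inequality. Taking real parts one gets
\begin{align*}
\re\mathfrak a(\uu,\uu)\ge \int_\Omega\!\Big[\mathcal Q(D_1\uu,\dots,D_d\uu)+(\Vs\uu,\uu)\Big]dx -(\kappa_B+\kappa_C)\!\int_\Omega\!\mathcal Q^{\frac12}\bigl(\gamma(\Vs\uu,\uu)+C_\gamma|\uu|^2\bigr)^{\frac12}dx-\kappa_W\!\int_\Omega\!\bigl(\gamma(\Vs\uu,\uu)+C_\gamma|\uu|^2\bigr)dx,
\end{align*}
using $\re\sum_{h,k}(A^{hk}D_k\uu,D_h\uu)\ge0$ from \eqref{realLegendrefunc}, $\re(V\uu,\uu)=(\Vs\uu,\uu)$, and $|\re(W\uu,\uu)|\le|(W\uu,\uu)|$ combined with \eqref{est_3}. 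Now I would apply Young's inequality to the mixed integral with a parameter $\delta\in\bigl(\tfrac{(\kappa_B+\kappa_C)^2}{4(1/\gamma-\kappa_W)},1\bigr)$, which is a nonempty interval exactly because of Hypothesis \ref{base}(v) ($K>0$). This yields
\begin{equation*}
\re\mathfrak a(\uu,\uu)+C_\gamma\!\left(\kappa_W+\frac{(\kappa_B+\kappa_C)^2}{4\delta}\right)\!\|\uu\|_2^2\ge(1-\delta)\!\!\int_\Omega\!\mathcal Q\,dx+\left(1-\gamma\kappa_W-\gamma\frac{(\kappa_B+\kappa_C)^2}{4\delta}\right)\!\!\int_\Omega\!(\Vs\uu,\uu)\,dx,
\end{equation*}
where both coefficients on the right are strictly positive, so the bound is $\ge\mu\|\uu\|_{Q,V}^2$ with $\mu=\min\{1-\delta,\,1-\gamma\kappa_W-\gamma(\kappa_B+\kappa_C)^2/(4\delta)\}>0$.

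With this at hand, Theorem \ref{TH:coerc} gives a strongly continuous analytic semigroup $(\T_2(t))_{t\ge 0}$ on $L^2(\Omega;\C^m)$ associated with $\mathfrak a$. For the norm bound, observe that the right-hand side of the previous display is still nonnegative at the endpoint $\delta=1$ (no strict ellipticity needed there, just nonnegativity), so by definition of $\omega_0$ one obtains $\omega_0\le C_\gamma\bigl(\kappa_W+(\kappa_B+\kappa_C)^2/4\bigr)$, and Theorem \ref{TH:coerc} then yields \eqref{leggi}. The only delicate point of the proof is the choice of $\delta$: Hypothesis \ref{base}(v) is tailored precisely so that a $\delta<1$ producing positive coefficients exists, and letting $\delta\to 1^-$ in the shift (while keeping $\mu>0$ for any fixed $\delta<1$) is what delivers the sharp exponential rate stated in \eqref{leggi}.
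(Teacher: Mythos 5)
Your proposal is correct and follows essentially the same route as the paper: continuity of $\mathfrak a$ is obtained by bounding each block via \eqref{CS}, \eqref{est_1}--\eqref{est_3} and \eqref{cond-c0} (together with $|\uu|^2\le v_0^{-1}(\Vs\uu,\uu)$), and ellipticity by a Young/Cauchy--Schwarz inequality with a parameter ($\delta$ in your notation, $\varepsilon$ in the paper's) whose admissible range is nonempty precisely by Hypothesis \ref{base}(v), with the endpoint $\delta=1$ delivering the accretivity shift $\omega_0\le C_\gamma\bigl(\kappa_W+(\kappa_B+\kappa_C)^2/4\bigr)$ and hence \eqref{leggi}. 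No gaps.
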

\begin{proof} 
Note that
$\af$ is densely defined by the assumption on $\V$. Moreover, 
observing that 
\begin{eqnarray*} 
\gamma (V_S\uu, \uu)+ C_\gamma|\uu|^2\leq (V_S\uu, \uu)(\gamma + C_\gamma v_0^{-1}),\qquad\;\, \uu \in\V,
\end{eqnarray*}
and taking \eqref{cond-c0} and \eqref{CS} into account, we get that 
\begin{align*}
|\af(\uu,\vv)|\le&\int_{\Omega}\sum_{i=1}^m (Q\nabla u_i,\nabla v_i)dx+2\kappa_A \int_{\Omega}\bigg (\sum_{i=1}^m (Q\nabla u_i,\nabla u_i)\bigg )^{\frac{1}{2}}\bigg (\sum_{i=1}^m (Q\nabla v_i,\nabla v_i)\bigg )^{\frac{1}{2}}dx\\
&+\kappa_B\int_\Om\bigg (\sum_{i=1}^m (Q\nabla u_i,\nabla u_i)\bigg )^{\frac{1}{2}}(\gamma (V_S\vv,\vv) + C_\gamma |\vv|^2)^{\frac{1}{2}}dx\\
&+\kappa_C\int_\Om\bigg (\sum_{i=1}^m (Q\nabla v_i,\nabla v_i)\bigg )^{\frac{1}{2}}(\gamma(V_S\uu,\uu) + C_\gamma |\uu|^2)^{\frac{1}{2}}dx\\
&+(1+c_0 +\kappa_W(\gamma+ C_\gamma v_0^{-1}))\int_\Om(V_S\uu, \uu)^{\frac{1}{2}}(V_S\vv, \vv)^{\frac{1}{2}}dx\\
\le & K_*\|\uu\|_{D_{Q,V} }\|\vv\|_{D_{Q,V}}
\end{align*}
for every $\uu,\vv \in \V$, where $K_*$ is a suitable positive constant, independent of $\uu$ and $\vv$.
Hence, $\af$ is continuous on $\V\times\V$.

Moreover, for every $\uu\in \mathcal V$ and $\varepsilon \in ]0,1[$ such that $\frac 1 \gamma - \kappa_W-\frac{(\kappa_B+\kappa_C)^2}{4\varepsilon}>0$ (which exists due to Hypothesis \ref{base}(v)), we can estimate:
\begin{align*}
{\rm Re}\hskip 1pt \af(\uu,\uu)\ge &\int_{\Om}\bigg [\sum_{i=1}^m(Q\nabla u_i,\nabla u_i)dx\\
&-\int_{\Om}( \kappa_B + \kappa_C)\bigg (\sum_{i=1}^m(Q\nabla u_i,\nabla u_i)\bigg )^{\frac{1}{2}}(\gamma(V_S\uu, \uu) + C_\gamma|\uu|^2)^{\frac{1}{2}}dx\\
&+\int_\Om \big [(V_S\uu,\uu)-\kappa_W(\gamma(V_S\uu, \uu) + C_\gamma|\uu|^2)\big]dx\\
\ge &(1-\varepsilon) \int_{\Om}\sum_{i=1}^m(Q\nabla u_i,\nabla u_i)dx + \bigg ( 1 - \gamma\kappa_W-\frac{\gamma(\kappa_B+\kappa_C)^2}{4\varepsilon}\bigg ) \int_{\Om}(V_S\uu, \uu)dx  \\
 &-C_\gamma \bigg (\kappa_W+\frac{(\kappa_B+\kappa_C)^2}{4\varepsilon}\bigg )\|\uu\|_2^2\\
\ge & \min\bigg\{1-\varepsilon, 1 - \gamma\kappa_W-\gamma\frac{(\kappa_B+\kappa_C)^2}{4\varepsilon}\bigg\}\|\uu\|_{D_{Q,V}}^2\\
&-C_\gamma \bigg(\kappa_W+\frac{(\kappa_B+\kappa_C)^2}{4\varepsilon}\bigg)\|\uu\|_2^2
\end{align*}
for every $\uu\in\V$. We have so proved that $\af$ is elliptic. As a byproduct, the opposite of the operator $\A_\V$ associated with $\af$ generates an analytic strongly continuous  semigroup $(\T_2(t))_{t\ge0}$ on $L^2(\Omega; \C^m)$. Letting $\varepsilon$ tend to $1$ from the left, we get that 
\begin{eqnarray*} 
\re\af(\uu,\uu) 
\geq -C_\gamma \bigg (\kappa_W+\frac{(\kappa_B+\kappa_C)^2}{4}\bigg )\|\uu\|_2^2,
\end{eqnarray*}
whence the claim.
\end{proof}


Distributionally, the operator $\A_\V$ associated with $\af$ acts  as follows:
\begin{equation*}
\A_\V\uu=- \sum_{h,k=1}^d D_h(Q^{hk}D_k \uu)+\sum_{h=1}^d  B^h D_h \uu-\sum_{h=1}^d D_h( C^h\uu)+(V+W)\uu.
\end{equation*}

\begin{rmk}{\rm The adjoint form $\af^*:\V\times \V \to \C$ is defined as follows:
\begin{align}
\af^*(\uu,\vv)=&\int_{\Omega}\bigg (\sum_{h,k=1}^d ((Q^{kh})^TD_k \uu, D_h\vv)+\sum_{h=1}^d\big [ ((C^h)^TD_h \uu, \vv)+ ((B^h)^T\uu, D_h\vv)\big  ]\bigg )dx\notag\\
&+\int_{\Omega}((V+W)^T\uu,\vv)dx\label{adjoint}
\end{align}
for every $\uu,\vv \in \V$. By Remark \ref{rem-2.2}(ii),
$-\A_\V^*$ 
generates an analytic and strongly continuous semigroup $(\T_2^*(t))_{t \ge 0}$  (which is the adjoint semigroup  of $(\T_2(t))_{t \ge 0}$) in $L^2(\Om;\C^m)$ satisfying  estimate \eqref{leggi}.
Distributionally, $\A_\V^*$ is given by the expression:

\begin{equation*}
\A_\V^*\uu=- \sum_{h,k=1}^d D_h((Q^{kh})^TD_k \uu)+\sum_{h=1}^d  (C^h)^T D_h \uu-\sum_{h=1}^d D_h( (B^h)^T\uu)+(V+W)^T\uu.
\end{equation*}
}
\end{rmk}

The following theorem proves that the semigroup $(\T_2(t))_{t\ge0}$ can be extended to a bounded semigroup in $L^p(\Omega;\Cm)$ for $p$ belonging to some intervals $\mathcal{I}$. For notational convenience, we set
\begin{align*}
\delta_1=\frac{K}{(\kappa_A^2+1)K+(\kappa_A(\kappa_B+\kappa_C)+\kappa_B)^2},\qquad\;\, 
\delta_2=\frac{K}{\kappa_A^2K+(\kappa_A(\kappa_B+\kappa_C)+\kappa_C)^2},
\end{align*}
where the constants $\kappa_A$, $\kappa_B$, $\kappa_C$ and $K$ are defined in Hypotheses \ref{base}.

\begin{thm}\label{main} Under Hypotheses $\ref{base}$ and $\ref{V}$, the semigroup $(\T_2(t))_{t\geq 0}$ extends consistently to  a semigroup $(\T_p(t))_{t\geq 0}$ in $L^p(\Omega;\Cm)$ for any $p\in \mathcal{I}$, where
\begin{eqnarray*} 
\mathcal{I}=\begin{cases}
]1,\infty[\,,\qquad & \kappa_A=\kappa_B=\kappa_C=0,\\[2mm]
\Big [1+\frac{\gamma\kappa_B^2}{4\left(1-\gamma\kappa_W\right)},\infty\Big [\,,\qquad &\kappa_A=\kappa_C=0,\,\kappa_B\neq 0,\\[2mm]
\Big ]1,1+\frac{4(1-\gamma k_W)}{\gamma\kappa_C^2}\Big ],\qquad &\kappa_A=\kappa_B=0,\, \kappa_C\neq 0,\\[2mm]
[2-\delta_1,2+ \delta_2],
&\kappa_A \neq 0\vee (\kappa_A=0\wedge \kappa_B\neq 0\wedge \kappa_C \neq 0).
\end{cases}
\end{eqnarray*}
Moreover, for every $\f\in L^p(\Om;\Cm)$ and $p\in\mathcal I$ it holds that
\begin{equation}\label{est_norm_gamma}
\|\T_p(t)\f\|_p\leq e^{\frac{C_\gamma}{\gamma}t}\|\f\|_p, \qquad\;\, t\in [0,\infty[\,.
\end{equation}
\end{thm}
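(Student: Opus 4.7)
The plan is to apply Nittka's criterion in its quasi-contractive formulation (Remark \ref{rem-2.4}), with the shift $\omega = C_\gamma/\gamma$. Since Proposition \ref{pcontr} already secures the invariance $P_p(\V) \subseteq \V$ for every $p \in \,]1,\infty[\,$, the theorem reduces to checking, for each $p \in \mathcal I$, the sign condition
\begin{equation*}
\re \af(\uu,|\uu|^{p-2}\uu) + \frac{C_\gamma}{\gamma}\int_\Omega |\uu|^p\, dx \ge 0
\end{equation*}
for every $\uu \in \V$ such that $|\uu|^{p-2}\uu \in \V$. Granted this, Remark \ref{rem-2.4} yields \eqref{est_norm_gamma}, and the consistency of $(\T_p(t))_{t\ge 0}$ across the $L^p$-scale is then automatic, since every $\T_p(t)$ is the unique continuous extension of $\T_2(t)$ from the dense subspace $L^2(\Omega;\Cm) \cap L^p(\Omega;\Cm)$.

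The core computation is to substitute the Kato-type chain-rule expressions for the derivatives of $|\uu|^{p-2}\uu$ into the definition \eqref{form} of $\af$. Arguing as in Lemma \ref{ELI}, and regularising $|\uu|$ by $(|\uu|^2 + \varepsilon)^{1/2}$ when $1<p<2$ before letting $\varepsilon \to 0^+$, one obtains
\begin{equation*}
D_h(|\uu|^{p-2}\uu) = |\uu|^{p-2}D_h \uu + (p-2)|\uu|^{p-3}(D_h |\uu|)\uu
\end{equation*}
on $\{\uu\ne 0\}$, together with \eqref{star1} for $D_h|\uu|$. Inserting these identities into \eqref{form} decomposes $\af(\uu,|\uu|^{p-2}\uu)$ into six groups of integrands, coming respectively from $Q$, the $A^{hk}$, the $B^h$, the $C^h$, $V$, and $W$. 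The $Q$-group yields the dominant positive contribution, a weighted sum of $\mathcal{Q}(D_1\uu,\ldots,D_d\uu)$ and $(Q\nabla|\uu|,\nabla|\uu|)$, while the five remaining groups are controlled by the structural inequalities \eqref{realLegendrefunc}, \eqref{comLeg}, \eqref{CS}, \eqref{est_1}, \eqref{est_2}, \eqref{est_3} and \eqref{cond-c0}.

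After these bounds and a standard Young splitting, the sign condition reduces to the pointwise nonnegativity of a real quadratic form $\Phi_p$ in the three nonnegative scalar quantities $\mathcal{Q}(D_1\uu,\ldots,D_d\uu)^{1/2}$, $(Q\nabla|\uu|,\nabla|\uu|)^{1/2}$ and $(V_S\uu,\uu)^{1/2}$, with coefficients polynomial in $p-2$, $\kappa_A$, $\kappa_B$, $\kappa_C$, $\kappa_W$ and $\gamma$. The specific choice $\omega = C_\gamma/\gamma$ is exactly what cancels the $C_\gamma|\uu|^2$ remainders produced by \eqref{est_1}--\eqref{est_3}, so that only the homogeneous piece survives and the discussion becomes purely algebraic. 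Hypothesis \ref{base}(v), i.e.\ $K>0$, makes $\Phi_2$ strictly positive definite; a direct computation of the principal minors of $\Phi_p$ then produces the four sharp regimes listed in the statement, the critical thresholds being the roots of a quadratic equation in $p-2$, from which $\delta_1$ and $\delta_2$ emerge in the generic case $\kappa_A \ne 0$ or ($\kappa_B \ne 0$ and $\kappa_C \ne 0$).

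The main obstacle is the careful bookkeeping of the three coupling channels and the asymmetric role of $B^h$ and $C^h$: since $\af$ is not symmetric, the contribution of $B^h$ (pairing $D_h\uu$ against the test function) and that of $C^h$ (pairing $\uu$ against $D_h$ of the test function) produce different $(p-2)$-dependent prefactors, and this is what ultimately makes $\mathcal I$ asymmetric in the intermediate cases where only one of $\kappa_B$ and $\kappa_C$ vanishes. A minor technical point is the justification of the chain-rule identity on $\{\uu=0\}$ when $p<2$; this is handled via the $(|\uu|^2+\varepsilon)^{1/2}$-regularisation and Lebesgue dominated convergence, exactly as in the last paragraph of the proof of Lemma \ref{ELI}.
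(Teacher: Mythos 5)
For $p\ge 2$ your plan coincides with the paper's: Nittka's criterion in the quasi-contractive form with shift $\omega=C_\gamma/\gamma$, the chain rule for $D_k(|\uu|^{p-1}\operatorname{sign}\uu)$, and the reduction to the positive semidefiniteness of a quadratic form in the three quantities $X=\big(\sum_i(Q\nabla u_i,\nabla u_i)\big)^{1/2}$, $Y=(Q\nabla|\uu|,\nabla|\uu|)^{1/2}$, $Z=(\gamma(V_S\uu,\uu)+C_\gamma|\uu|^2)^{1/2}$. That part is sound and yields the upper half of $\mathcal I$, i.e.\ $[2,2+\delta_2]$ (or $[2,\infty[$ when $\kappa_A=\kappa_C=0$).

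The gap is in the range $1<p<2$. Your quadratic form $\Phi_p$ contains the term $(p-2)Y^2$ coming from the $Q$-part of the form, so for $p<2$ one of its diagonal entries is negative and $\Phi_p$ can never be positive semidefinite as a form in three \emph{independent} variables; the principal-minor computation you invoke simply fails there, and the $\varepsilon$-regularisation of $|\uu|$ does not address this algebraic obstruction. One could try to rescue the direct argument by using the pointwise constraint $Y\le X$ to absorb the negative term, but that changes the algebra and does not produce the stated lower endpoint. The paper instead treats $p\in\,]1,2[$ by duality: the adjoint form $\af^*$ has the same structure with $(A^{hk})^T,(C^h)^T,(B^h)^T,W^T$ in place of $A^{hk},B^h,C^h,W$ (so with $\kappa_B$ and $\kappa_C$ swapped), the $p\ge2$ analysis applies to $(\T_2^*(t))_{t\ge0}$ on $L^{p'}$, and transporting the result back through the conjugate exponent $p=p'/(p'-1)$ turns the threshold $2+\delta_2(\kappa_A,\kappa_C,\kappa_B)$ into $2-\delta_1$. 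This Möbius transformation is exactly why $\delta_1$ has the extra $K$ in its denominator compared with $\delta_2$; a direct minor computation would not produce it. To complete your proof you need to add this duality step (or an equivalent substitute) for the left half of $\mathcal I$. A further small inaccuracy: at $p=2$ the form is only positive \emph{semi}definite under $K>0$ (it is degenerate in the $Y$-direction), not strictly positive definite.
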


\begin{proof} We will prove the claim by applying Nittka's criterion. Since by Proposition \ref{pcontr}, $P_p(\V)\subset \V$ for every $p\in ]1,\infty]$\,, it suffices to prove that ${\rm Re}\, \af(\uu,|\uu|^{p-1}{\rm sign}\hskip 1pt\uu)\ge 0$ for every $\uu\in \V$ such that $|\uu|^{p-1}{\rm sign}\hskip 1pt\uu \in \V$. To this aim, we fix $\uu$ as above. Recalling that 
\begin{eqnarray*} 
D_k(|\uu|^{p-1}{\rm sign}\hskip 1pt\uu\big) =|\uu|^{p-2}(D_k \uu + (p-2)({\rm sign}\hskip 1pt\uu) D_k|\uu|)
\end{eqnarray*}
(see e.g., the proof of \cite[Theorem 3.1]{ATS}) and taking \eqref{star1} into account, we deduce 
\begin{align*}
&\re\af(\uu, |\uu|^{p-1}{\rm sign}\uu)\\
=&\int_{\Om}\bigg[\sum_{i=1}^m (Q \nabla u_i, \nabla u_i)|\uu|^{p-2}+(p-2)(Q\nabla |\uu|,\nabla|\uu|)|\uu|^{p-2} \\
&\qquad + \re\sum_{h,k=1}^d (A^{hk}D_k \uu, D_h\uu) |\uu|^{p-2}
 + (p-2)\re \sum_{h,k=1}^d (A^{hk}D_k \uu, \uu)|\uu|^{p-3}D_h|\uu| \\
&\qquad+
\re \sum_{h=1}^d[(B^h D_h \uu,\uu)+(C^h\uu,D_h\uu)]|\uu|^{p-2}\\
&\qquad+ (p-2)
\re \sum_{h=1}^d (C^h\uu,\uu |\uu|^{p-3})D_h |\uu|+\re((V+W)\uu,\uu) 
|\uu|^{p-2}\bigg] dx.
\end{align*}
\noindent

We begin by considering the case $p \in ]2,\infty[$ and use Hypotheses \ref{base}(iii), see also \eqref{CS}, to estimate 
\begin{eqnarray*} 
\re\sum_{h,k=1}^d (A^{hk}D_k \uu, D_h\uu) \ge 0,
\end{eqnarray*}
\begin{align}
\re \sum_{h,k=1}^d (A^{hk}D_k \uu, \uu)D_h|\uu|&\ge - \bigg |\sum_{h,k=1}^d (A^{hk}D_k \uu, \uu D_h|\uu|)\bigg|\notag\\
&\ge - 2\kappa_A |\uu|\left(Q \nabla |\uu|,\nabla |\uu|\right)^{\frac{1}{2}}\bigg (\sum_{i=1}^m(Q \nabla u_i, \nabla u_i)\bigg )^{\frac{1}{2}}.
\label{pedalo}
\end{align}
Further,
\begin{align*}
&\re\sum_{h=1}^d[ (B^h D_h \uu,\uu)+(C^h\uu, D_h \uu)]\\
\ge &-\bigg |\sum_{h=1}^d(B^h D_h\uu,\uu)\bigg |-\bigg |\sum_{h=1}^d(C^h\uu,D_h\uu)\bigg |\\
\ge &-(\kappa_B+\kappa_C)\bigg (\sum_{i=1}^m(Q \nabla u_i,\nabla u_i)\bigg )^{\frac{1}{2}}(\gamma (V_S\uu,\uu)+C_\gamma|\uu|^2)^{\frac{1}{2}}.
\end{align*}
Analogously, 
\begin{align}\label{num}\re \sum_{h=1}^d (C^h\uu,\uu D_h |\uu|)
&\ge -\kappa_C|\uu|(Q \nabla |\uu|,\nabla|\uu|)^{\frac{1}{2}}(\gamma (V_S\uu,\uu)+C_\gamma|\uu|^2)^{\frac{1}{2}}.
\end{align}
Finally, 
\begin{align*}
\re((V+W) \uu,\uu)\ge \bigg (\frac{1}{\gamma}-\kappa_W\bigg )(\gamma (V_S\uu,\uu)+C_\gamma|\uu|^2)-\frac{C_\gamma}{\gamma}|\uu|^2.
\end{align*}
Thus, putting all together and setting $X:=\left (\sum_{i=1}^m(Q \nabla u_i, \nabla u_i)\right )^{\frac{1}{2}}$, $Y:=(Q\nabla |\uu|,\nabla|\uu|)^{\frac{1}{2}}$ and $Z:= (\gamma (V_S\uu,\uu) + C_\gamma|\uu|^2)^{\frac{1}{2}}$, we get

\begin{align*}
&\re\af(\uu, |\uu|^{p-1}{\rm sign}\hskip 1pt\uu)+ \frac{C_\gamma}{\gamma}\|\uu\|_p^p \geq \int_{\Om}|\uu|^{p-2}\mathcal E(X,Y,Z)dx,
\end{align*}
where $\mathcal E$ is the quadratic form defined by
\begin{eqnarray*} 
\mathcal E(x, y, z)=x^2 + (p-2)y^2+(\gamma^{-1}-\kappa_W)z^2 - 2(p-2)\kappa_A xy - \big(\kappa_B+\kappa_C \big)xz - \kappa_C(p-2)yz 
\end{eqnarray*}
for every $x,y,z\in\R$.
By considering the associated matrix 
\begin{eqnarray*} 
M_\gamma=
\begin{pmatrix} 
1 & -(p-2)\kappa_A & -\frac{\kappa_B+\kappa_C}{2}\\[1mm]
-(p-2)\kappa_A & p-2 & -\frac{p-2}{2}\kappa_C\\[1mm]
-\frac{\kappa_B+\kappa_C}{2} & - \frac{p-2}{2}\kappa_C\ &\frac{1}{\gamma} - \kappa_W\end{pmatrix},
\end{eqnarray*}
we conclude that $\mathcal E$ is semidefinite positive if and only if $p-2\leq\kappa_A^{-2}$, when $\kappa_A \neq 0$, and the determinant of $M_{\gamma}$ is nonnegative, i.e., 
\begin{align}
K -(p-2) \big(\kappa_C^2 + 4(\gamma^{-1}-\kappa_W)\kappa_A^2 + 2\kappa_A\kappa_C(\kappa_B+\kappa_C)\big)\geq 0,
\label{cond-per-nittka}
\end{align}
(see Hypothesis \ref{base}(v)), or equivalently if $p \in\mathcal{I}_1(\kappa_A,\kappa_B,\kappa_C)$ where
\begin{align} 
\mathcal{I}_1(\kappa_A,\kappa_B,\kappa_C)=\begin{cases}
[2,\infty[\,, \qquad & (\kappa_A, \kappa_C)=(0,0),\\[1mm]
[2,2+ \delta_2],\qquad &(\kappa_A,\kappa_C)\neq (0,0).
\end{cases}
\label{J1}
\end{align}
Note that, in the case $\kappa_A\neq 0$, \eqref{cond-per-nittka} is stronger than the condition $p-2 \le \kappa_A^{-2}$. Thus, taking into account that
\begin{eqnarray*} 
\re\af(\uu, |\uu|^{p-1}{\rm sign}\hskip 1pt\uu)+ \frac{C_\gamma}{\gamma}\|\uu\|_p^p \geq 0
\end{eqnarray*}
for $p\in \mathcal{I}_1(\kappa_A,\kappa_B,\kappa_C) $ and
applying Nittka's criterion (taking Remark \ref{rem-2.4} into account) we deduce that
$(\T_2(t))_{t \ge 0}$ admits a continuous extension $(\T_p(t))_{t \ge 0}$ in $L^p(\Om; \C^m)$ for $p \in \mathcal{I}_1(\kappa_A,\kappa_B, \kappa_C)$ and  for such $p$'s estimate \eqref{est_norm_gamma}
holds true in $L^p(\Om;\Cm)$.

The case $p\in\, ]1,2[$ can be obtained by duality considering the adjoint form $\af^*$ (see \eqref{adjoint}).
In this case, since $(M^T \xi, \eta)=\overline{(M\eta,\xi)}$ for every $\R^{m\times m}$-matrix $M$ and every $\xi, \eta \in \C^m$ and, consequently, ${\rm Re}(M^T \xi, \eta)={\rm Re}(M\eta,\xi)$ and ${\rm Im}(M^T \xi, \eta)=-{\rm Im}(M\eta,\xi)$, the transposed matrices $(A^{kh})^T$, $(B^h)^T$, $(C^h)^T$ and $W^T$ satisfy the same assumptions as $A^{hk}$, $C^h$, $B^h$ and $W$ respectively. Hence, arguing similarly, we deduce that each operator $\T_2^*(t)$ admits a continuous extension $\T_{p'}^*(t)$ in $L^{p'}(\Om;\C^m)$ which satisfies estimate \eqref{est_norm} for every $p'\in J_1(\kappa_A,\kappa_C,\kappa_B)$. Coming back to $L^p(\Om;\Cm)$ and to the adjoint semigroups and using that $p'$ is the conjugate exponent of $p$ we deduce that $(\T_2(t))_{t\ge 0}$ can be extended to a continuous semigroup $(\T_p(t))_{t\ge 0}$ for every $p \in \mathcal{I}_2(\kappa_A,\kappa_B,\kappa_C)$, where
\begin{equation} 
\mathcal{I}_2(\kappa_A,\kappa_B,\kappa_C)=
\begin{cases}
]1,2], \qquad & (\kappa_A, \kappa_B)=(0,0),\\
[2-\delta_1,2],\qquad &(\kappa_A,\kappa_B)\neq (0,0).
\end{cases}
\label{J2}
\end{equation}
From \eqref{J1} and \eqref{J2}, the claim follows. 
\end{proof}


Note that in Theorem \ref{main} we proved that the operator norm $\|\T_p(t)\|_{\mathcal L(L^p(\Om;\Cm))}$ can be estimated by an exponential term, that is independent of $p$.
A modification of Hypothesis \ref{base}(iv) allows us to prove a different estimate for the operator norm of the semigroup $(\T_p(t))_{t\ge 0}$ in $L^p(\Om;\C^m)$, which better highlights the dependence of the exponential term on $p$. 

\begin{hyp}\label{5bis} the matrices  $B^h$, $C^h$ $(h=1,\dots,d)$ and $W$ belong to $L^\infty_{\rm loc}(\Om;\R^{m\times m})$ and
there exist positive constants $\kappa_B$, $\kappa_C$, $\kappa_W$ and a function $\varphi:]0,\infty[\ra ]0,\infty[$ such that  
\begin{align}\label{est_11}&\bigg|\sum_{h=1}^d (B^h \theta^h, \eta) \bigg| \leq \kappa_B\mathcal{Q}(\theta^1,\ldots,\theta^d)^\frac{1}{2}(\gamma  (\Vs\eta, \eta) + \varphi(\gamma) |\eta|^2)^{\frac{1}{2}},
\end{align}
\begin{align}\label{est_22}
&\bigg|\sum_{h=1}^d (C^h \eta,\theta^h) \bigg| \leq  \kappa_C\mathcal{Q}(\theta^1,\ldots,\theta^d)^\frac{1}{2}(\gamma(\Vs\eta, \eta) + \varphi(\gamma) |\eta|^2)^{\frac{1}{2}},
\end{align}
\begin{align*}
&|(W\xi, \eta) | \leq \kappa_W(\gamma (\Vs\xi, \xi) + \varphi(\gamma) |\xi|^2)^\frac 1 2(\gamma (V_S\eta, \eta) + \varphi(\gamma)|\eta|^2)^\frac{1}{2}
\end{align*}
almost everywhere in $\Om$, for every $\gamma>0$ and  every $\theta^1,\ldots,\theta^d, \xi, \eta\in \Cm$.
\end{hyp}

\begin{thm}\label{main-bis} Under Hypotheses $\ref{base}(i)$-$(iii)$, $\ref{V}$ and $\ref{5bis}$, the semigroup $(\T_2(t))_{t\geq 0}$ extends consistently to  semigroups $(\T_p(t))_{t\geq 0}$ in $L^p(\Omega;\Cm)$ for every $p\in ]1,\infty[$\,, if $\kappa_A=0$, and for every $p\in \left ]1+\frac{2\kappa_A}{2\kappa_A+1},2+\frac{1}{2\kappa_A}\right [$\,, otherwise. Moreover, 
\begin{equation}
\label{est_norm}
\|\T_p(t)\f\|_p\leq e^{\gamma_{p}^{-1}\varphi(\gamma_p)t}\|\f\|_p, \qquad\;\, t\in [0,\infty[\,,
\end{equation}
for every $\f\in L^p(\Omega;\C^m)$, where 
\begin{eqnarray*}
\gamma_p=\left(\kappa_W+\frac{(\kappa_B+(p-1)\kappa_C)^2}{4[((p-1)^2\wedge 1)-2\kappa_A((p-1)\wedge 1)|p-2|]}\right)^{-1}.
\end{eqnarray*}

\end{thm}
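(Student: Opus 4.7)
My plan is to apply Nittka's criterion (Theorem \ref{Nittka}) in its quasi-contractive form sanctioned by Remark \ref{rem-2.4}. By Proposition \ref{pcontr} we already have $P_p(\V)\subseteq\V$ for every $p\in\,]1,\infty[$, so it suffices to verify, setting $\omega_p:=\gamma_p^{-1}\varphi(\gamma_p)$, the sign condition
\begin{equation*}
\re\af(\uu,|\uu|^{p-1}{\rm sign}\,\uu)+\omega_p\|\uu\|_p^p\geq 0
\end{equation*}
for every $\uu\in\V$ with $|\uu|^{p-1}{\rm sign}\,\uu\in\V$. I would follow the scheme of Theorem \ref{main} but replace the three-variable quadratic argument by a sharper two-variable one, exploiting the fact that in Hypothesis \ref{5bis} the constants multiplying $\varphi(\gamma)$ are independent of $\gamma$.

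For $p\geq 2$ I would expand $D_k(|\uu|^{p-1}{\rm sign}\,\uu)=|\uu|^{p-2}(D_k\uu+(p-2){\rm sign}(\uu)D_k|\uu|)$ and estimate the resulting integrand term by term as in the proof of Theorem \ref{main}, obtaining
\begin{equation*}
\re\af(\uu,|\uu|^{p-1}{\rm sign}\,\uu)+\gamma^{-1}\varphi(\gamma)\|\uu\|_p^p\geq\int_\Om|\uu|^{p-2}\mathcal{F}(X,Y,Z)\,dx,
\end{equation*}
where $X^2=\sum_i(Q\nabla u_i,\nabla u_i)$, $Y^2=(Q\nabla|\uu|,\nabla|\uu|)$, $Z^2=\gamma(V_S\uu,\uu)+\varphi(\gamma)|\uu|^2$, and
\begin{equation*}
\mathcal{F}(X,Y,Z):=X^2+(p-2)Y^2-2(p-2)\kappa_A XY-(\kappa_B+\kappa_C)XZ-(p-2)\kappa_C YZ+(\gamma^{-1}-\kappa_W)Z^2.
\end{equation*}
The key departure from Theorem \ref{main} is to invoke the Kato-type inequality $Y\leq X$ (which follows from $|Q^{1/2}\nabla|\uu||^2=|\uu|^{-2}|\re\sum_i\overline{u_i}\,Q^{1/2}\nabla u_i|^2\leq\sum_i|Q^{1/2}\nabla u_i|^2$) \emph{at the outset}: dropping the non-negative term $(p-2)Y^2$ and replacing $XY$ by $X^2$ and $YZ$ by $XZ$, I would bound $\mathcal F$ from below by the two-variable quadratic form
\begin{equation*}
[1-2(p-2)\kappa_A]X^2-(\kappa_B+(p-1)\kappa_C)XZ+(\gamma^{-1}-\kappa_W)Z^2.
\end{equation*}
This is positive semidefinite exactly when $1-2(p-2)\kappa_A>0$ (i.e., $p<2+(2\kappa_A)^{-1}$, a vacuous requirement when $\kappa_A=0$) and $\gamma^{-1}\geq\kappa_W+(\kappa_B+(p-1)\kappa_C)^2/[4(1-2(p-2)\kappa_A)]$, a relation which becomes equality precisely for $\gamma=\gamma_p$. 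Nittka's criterion then proves \eqref{est_norm} on $[2,2+(2\kappa_A)^{-1})$ (or $[2,\infty[$ when $\kappa_A=0$).

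For $p\in\,]1,2[$ I would argue by duality. Remark \ref{rem-2.2}(ii) identifies the semigroup generated by the adjoint form $\af^*$ of \eqref{adjoint} with $(\T_2^*(t))_{t\geq 0}$; its coefficients $(A^{kh})^T$, $(C^h)^T$, $(B^h)^T$, $W^T$ satisfy Hypotheses \ref{base}(i)--(iii) and \ref{5bis} with $\kappa_A$ and $\kappa_W$ unchanged and with the roles of $\kappa_B$ and $\kappa_C$ interchanged. Applying the previous step to $\af^*$ at the conjugate exponent $p'=p/(p-1)\in\,]2,\infty[$ yields $\|\T_2^*(t)\|_{\mathcal L(L^{p'}(\Om;\C^m))}\leq\exp((\gamma_{p'}^*)^{-1}\varphi(\gamma_{p'}^*)\,t)$ as soon as $p'<2+(2\kappa_A)^{-1}$, equivalently $p>1+2\kappa_A/(2\kappa_A+1)$. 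A direct substitution using $p'-1=(p-1)^{-1}$ and $p'-2=(2-p)/(p-1)$ reveals
\begin{equation*}
(\gamma_{p'}^*)^{-1}=\kappa_W+\frac{(\kappa_C+(p'-1)\kappa_B)^2}{4[1-2(p'-2)\kappa_A]}=\kappa_W+\frac{(\kappa_B+(p-1)\kappa_C)^2}{4(p-1)[(p-1)-2\kappa_A(2-p)]}=\gamma_p^{-1},
\end{equation*}
which is exactly the value of $\gamma_p^{-1}$ produced for $p<2$ by the piecewise definitions $(p-1)^2\wedge 1=(p-1)^2$ and $(p-1)\wedge 1=p-1$. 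Passing to adjoints in $L^p(\Om;\C^m)$ yields \eqref{est_norm} on $\,]1+2\kappa_A/(2\kappa_A+1),2[$. Combining the two ranges gives the stated interval, and the consistency of the resulting semigroups $(\T_p(t))_{t\geq 0}$ across the $L^p$-scale is automatic, each $\T_p(t)$ being the unique bounded extension of $\T_2(t)$ on $L^2\cap L^p$.

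The principal technical difficulties are, first, keeping the three simultaneous applications of the Kato inequality $Y\leq X$ ($XY\leq X^2$, $YZ\leq XZ$, and $(p-2)Y^2\geq 0$) coherent, so that the reduction to a two-dimensional positive semidefinite problem in $(X,Z)$ yields precisely the coefficient $\kappa_B+(p-1)\kappa_C$ rather than a coarser constant (a naive direct argument for $p<2$ would produce $\kappa_B+(3-p)\kappa_C$, which is the main reason that range must be treated by duality); and, second, the algebraic verification above that the constant produced by the direct argument for the adjoint at $p'$ matches the piecewise expression defining $\gamma_p$ at $p$.
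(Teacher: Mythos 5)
Your proposal is correct and follows essentially the same route as the paper's proof: the same reduction via Nittka's criterion and Proposition \ref{pcontr}, the same use of the pointwise inequality $(Q\nabla|\uu|,\nabla|\uu|)\le\sum_i(Q\nabla u_i,\nabla u_i)$ to collapse the three-variable quadratic form of Theorem \ref{main} to the two-variable form $(1+2\kappa_A(2-p))X^2-(\kappa_B+(p-1)\kappa_C)XZ+(\gamma^{-1}-\kappa_W)Z^2$ for $p>2$, and duality with the adjoint form at $p'$ for $p<2$. Your explicit verification that $(\gamma^*_{p'})^{-1}$ coincides with the piecewise expression defining $\gamma_p^{-1}$ for $p<2$ is a detail the paper leaves implicit, but the argument is the same.
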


\begin{proof} The proof follows the same lines as that of Theorem \ref{main}. Hence, we limit ourselves to emphasize the main differences. Clearly, it suffices to consider the case $p\in\, ]2,\infty[$\,, since the case $p=2$ follows by estimate \eqref{leggi} (with $C_{\gamma}$ being replaced by $\varphi(\gamma_2)$) and the case $p\in\, ]1,2[$ follows by duality as in the proof of Theorem \ref{main}. For this purpose, we fix $p\in\, ]2,\infty[$ and $\uu\in\V$ such that $|\uu|^{p-1}{\rm sign}\,\uu \in \V$. Observing that
\begin{equation}
\label{dis-solita}(Q\nabla|\uu|, \nabla|\uu|)\leq \sum_{i=1}^m (Q\nabla u_i, \nabla u_i),
\end{equation}
from \eqref{pedalo} we can estimate
\begin{align*}
\re \sum_{h,k=1}^d (A^{hk}D_k \uu, \uu)D_h|\uu|
&\ge - 2\kappa_A |\uu|\sum_{i=1}^m(Q \nabla u_i,\nabla u_i).
\end{align*}
Estimating the other terms in the same way as in the proof of the quoted theorem, and using again \eqref{dis-solita} in \eqref{num}, we conclude that
\begin{align*}
\re\af(\uu, |\uu|^{p-1}{\rm sign}\hskip 1pt\uu) \ge & \int_{\Om}\mathcal E_\gamma(X,Y)|\uu|^{p-2}dx
-\frac{\varphi(\gamma)}\gamma \int_\Omega |\uu|^pdx\end{align*}
where $X:=(Q\nabla\uu,\nabla\uu)^{\frac{1}{2}}$, $Y:=(\gamma (V_S\uu,\uu)+\varphi(\gamma)|\uu|^2)^{\frac{1}{2}}$, and
$\mathcal E_\gamma$ is the quadratic form defined by
\begin{eqnarray*} 
\mathcal E_\gamma(x, y)=(1+2\kappa_A(2-p))x^2-(\kappa_B+(p-1)\kappa_C)xy+(\gamma^{-1}-\kappa_W)y^2,\qquad\;\,x,y\in\R.
\end{eqnarray*}
The form $\mathcal E_\gamma$ is semidefinite positive if $1+2k_A(2-p)>0$ and 
\begin{eqnarray*} 
\gamma\le \gamma_p:= \left (\kappa_W+\frac{(\kappa_B+(p-1)\kappa_C)^2}{4(1+2\kappa_A(2-p))}\right )^{-1}.
\end{eqnarray*}
From the previous estimate it follows that
\begin{align*}
\re\af(\uu, |\uu|^{p-1}{\rm sign}\hskip 1pt\uu) \ge 
-\frac{\varphi(\gamma_p)}{\gamma_p}\int_{\Omega}|\uu|^pdx,
\end{align*}
so that, using Proposition \ref{pcontr} and applying Nittka's criterion, we conclude that $(\T_2(t))_{t\ge 0}$ extends consistently to a semigroup $(\T_p(t))_{t\geq 0}$ in $L^p(\Omega;\Cm)$ satisfying estimate \eqref{est_norm} for every $p\in \left [2,2+\frac{1}{2\kappa_A}\right [$\,, if $\kappa_A\neq 0$ and for every $p\in [2,\infty[$ otherwise.
\end{proof}

\begin{rmk} 
{\rm As already pointed out, the domains of the generators of the semigroups $(\T_p)_{t\geq 0}$  are described only in an abstract way through the choice of the space $\V$. Nevertheless, under additional regularity assumptions on the coefficients, an explicit description is possible. For example, if $\Omega =\Rd$ and $\V=\overline{C_c^\infty(\Rd;\Cm)}$, by using conditions that ensure that $C_c^\infty(\Rd; \Cm)$ is a core for the generators of the semigroups, the description of the domain has been obtained in \cite[Section 4]{AngLorMan}  for  operators  possibly coupled up to the second-order (i.e., $A^{hk}\not= 0$) and in \cite{ALMR} in the case of operators coupled up to the first-order.}
\end{rmk}

\section{Upper Gaussian estimates}
\label{sect-4}
In  this section, we assume stronger assumptions on the domain $\Omega$ and on the coefficients of the elliptic operator $\A_\V$, 
in order to show that its associated semigroup $(\T_2(t))_{t \ge 0}$ is defined by a kernel that satisfies a Gaussian estimate expressed in terms of a distance that takes into account the growth of the coefficients of the operator $\A_{\V}$.

\begin{hyp}\label{base-2}
\begin{enumerate}[\rm (i)]
\item 
$\Omega$ has the extension property;
\item
there exists a positive constant $\nu_0$ such that 
$\lambda_Q(x)\ge\nu_0$ for almost every $x\in\Omega$;
\item 
 $A^{hk}$ $(h,k=1,\ldots,d)$ and $W$ identically vanish on $\Omega$;
\item 
there exist  positive constants $\beta$, $\kappa$ and $c\ge 1$ such that, for every $\gamma>0$, every $\theta^1,\ldots,\theta^d, \xi, \eta\in \Cm$ and almost every $x\in\Om$, the following estimates hold:
\begin{align*} 
&\bigg|\sum_{h=1}^d (B^h(x) \theta^h, \eta) \bigg| \leq \kappa\big (\mathcal{Q}(\theta^1,\ldots,\theta^d)\big )^\frac{1}{2}\big (\gamma (V_S(x)\eta, \eta) + c\gamma^{-\beta}|\eta|^2\big )^{\frac{1}{2}},\\
&\bigg|\sum_{h=1}^d (C^h(x)\eta,\theta^h) \bigg| \leq  \kappa(\mathcal{Q}(\theta^1,\ldots,\theta^d) )^\frac{1}{2} \big (\gamma (V_S(x)\eta, \eta) + c\gamma^{-\beta} |\eta|^2\big )^{\frac{1}{2}}.
\end{align*}
\end{enumerate}
\end{hyp}

We prove some kernel estimates expressed in term of the distance $d_{Q,V,\beta}$ defined by 
\begin{eqnarray}\label{distanza}
d_{Q,V,\beta}(x,y)=\sup\Big\{|\psi(x)-\psi(y)|:\psi\in C_c^\infty(\Om)
, |(Q \nabla\psi, \nabla \psi)|  \leq \lambda_V^{\frac{\beta}{\beta+1}}\,  a.e.\, in \,\,\Om\Big\},
\end{eqnarray}
for every $x,y\in\Om$.

\begin{rmk}{\rm
\begin{enumerate}[\rm (i)]
\item 
We emphasize that, if $Q$ and $V$ are bounded, then $d_{Q,V,\beta}$ is equivalent to the Euclidean distance (see e.g., \cite[Corollary 6.15]{Ou}).
In the general case, $d_{Q,V,\beta}$ is equivalent to the Euclidean norm if there exist two positive constants  $q_0$ and $q_1$ such that 
$q_0 \lambda_V(x)^{\frac{\beta}{\beta+1}}|\xi|^2  \leq (Q(x)\xi, \xi) \leq q_1\lambda_V(x)^{\frac{\beta}{\beta+1}}|\xi|^2$ for every $\xi\in\Rd$ and almost every $x\in \Om$ (see e.g., \cite[Theorem 7]{daviesexplicit}).
\item 
Hypotheses $\ref{base}(i)$-$(ii)$, $\ref{V}$ and $\ref{base-2}$ allow us to apply Theorem \ref{main-bis} and extend consistently the semigroup $(\T_2(t))_{t\ge 0}$ to the space $L^p(\Om;\C^m)$ for every $p \in ]1,\infty[$. For this reason, in the following theorem, we simply write $\T(t)$ instead of $\T_p(t)$ for every $t\in [0,\infty[$\,.
\end{enumerate}
}\end{rmk}

\begin{thm}\label{Gauss}
Assume that Hypotheses $\ref{base}(i)$-$(ii)$ and $\ref{base-2}$ are satisfied.
Then, the semigroup  $(\T(t))_{t \ge 0}$ is bounded from $L^1(\Omega; \C^m)$ into $L^{\infty}(\Omega;\C^m)$. Hence, it is described by a kernel $\bm{k}(t,\cdot, \cdot) \in L^\infty(\Omega\times \Omega; \C^{m\times m})$, which, in addition, satisfies the estimate
\begin{align}
|k_{ij}(t,x,y)|&\leq 
C_0\left(1+t^{-1}+\left(t^{-1}d_{Q,V,\beta}(x,y)\right)^{\frac{2\beta+2}{2\beta+1}}\right )^{\frac{d}{2}}e^{C_1t-C_2t^{-\frac{1}{2\beta+1}}d_{Q,V,\beta}(x,y)^{\frac{2\beta+2}{2\beta+1}}}
\label{ker-est}
\end{align}
for every $i,j=1, \ldots,m$, for almost every $x, y \in \Omega$ and for some positive constants $C_0$, $C_1$ and $C_2$, explicitly computed in the proof, In particular, $C_0$ depends on $d,\beta,\nu_0,\kappa,c$, whereas $C_1$ and $C_2$, depend only on $\beta,\kappa,c$. 
\end{thm}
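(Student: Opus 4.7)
The plan is to apply the Davies exponential perturbation method, adapted to the vector-valued setting. First, the extension property of $\Omega$ gives $W^{1,2}(\Omega;\C^m) \hookrightarrow L^q(\Omega;\C^m)$ for some $q>2$, and the bound $\lambda_Q \ge \nu_0$ converts this into
\begin{equation*}
\|\uu\|_q^2 \le M\bigl(\af^0(\uu,\uu) + \|\uu\|_2^2\bigr), \qquad \uu \in \V,
\end{equation*}
with $M = M(d,\nu_0)$. Combining this with the ellipticity of $\af$ coming from (the proof of) Theorem \ref{main-bis}, a Nash iteration already produces ultracontractivity of $(\T(t))$, and hence the existence of a kernel $\bm{k}(t,\cdot,\cdot) \in L^\infty(\Omega\times\Omega;\C^{m\times m})$.

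For the quantitative Gaussian estimate, fix a real-valued $\psi \in C_c^\infty(\Omega)$ with $(Q\nabla\psi,\nabla\psi) \le \lambda_V^{\beta/(\beta+1)}$ a.e.\ and $\rho \in \R$; introduce the twisted form $\af_\rho(\uu,\vv) := \af(e^{-\rho\psi}\uu, e^{\rho\psi}\vv)$. A direct expansion yields
\begin{equation*}
\re\af_\rho(\uu,\uu) = \re\af(\uu,\uu) - \rho^2\!\int_\Omega\!(Q\nabla\psi,\nabla\psi)|\uu|^2 dx + \rho\sum_h\!\int_\Omega\!(D_h\psi)\re((C^h-B^h)\uu,\uu)dx,
\end{equation*}
since the quadratic cross terms $\rho\int\sum_i[(Q\nabla u_i,\nabla\psi)\bar u_i - (Q\nabla\psi,\nabla u_i)u_i]dx$ are purely imaginary by the symmetry of $Q$. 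Bounding the first-order contributions through Hypothesis \ref{base-2}(iv) applied with $\theta^h = (D_h\psi)\uu$ and $\eta = \uu$, using the constraint on $\psi$, and invoking Young's inequality $\lambda_V^{\beta/(\beta+1)} \le \delta\lambda_V + \tfrac{\delta^{-\beta}}{\beta+1}$ with $\delta \sim \rho^{-2}$, I would obtain
\begin{equation*}
\re\af_\rho(\uu,\uu) + \omega(\rho)\|\uu\|_2^2 \ge c_0\,\af^0(\uu,\uu), \qquad \uu\in\V,
\end{equation*}
with $c_0>0$ absolute and $\omega(\rho) = C_1 + C_2|\rho|^{2\beta+2}$, where $C_1, C_2$ depend only on $\beta, \kappa, c$.

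Since the semigroup generated by the operator associated with $\af_\rho$ is $e^{\rho\psi}\T(t)e^{-\rho\psi}$, the Sobolev inequality and the coercivity estimate just established combine to give $\|\uu\|_q^2 \le M'\bigl(\af_\rho(\uu,\uu) + \omega(\rho)\|\uu\|_2^2\bigr)$, and a careful tracking of the $\omega(\rho)$-dependence in the Nash/Moser scheme produces the pointwise bound
\begin{equation*}
|k_{ij}(t,x,y)| \le C_0\bigl(1 + t^{-1} + \omega(\rho)\bigr)^{d/2}\exp\bigl(\omega(\rho)t - \rho(\psi(x) - \psi(y))\bigr)
\end{equation*}
for a.e.\ $x,y\in\Omega$, every admissible $\psi$ and every $\rho \ge 0$, with $C_0 = C_0(d,\beta,\nu_0,\kappa,c)$. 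Taking the supremum over admissible $\psi$ (with a standard truncation argument to accommodate non-compactly supported candidates in the definition of $d_{Q,V,\beta}$) replaces $\rho(\psi(x) - \psi(y))$ by $\rho\, d_{Q,V,\beta}(x,y)$. Minimizing $\omega(\rho)t - \rho\, d_{Q,V,\beta}(x,y)$ at $\rho_* \sim (d_{Q,V,\beta}(x,y)/t)^{1/(2\beta+1)}$ produces the Legendre exponent $\widetilde C_1 t - \widetilde C_2\, t^{-1/(2\beta+1)}d_{Q,V,\beta}(x,y)^{(2\beta+2)/(2\beta+1)}$, and since $\omega(\rho_*) \sim (t^{-1}d_{Q,V,\beta}(x,y))^{(2\beta+2)/(2\beta+1)}$, reinserting $\rho_*$ into the polynomial prefactor gives the $(1 + t^{-1} + (t^{-1}d_{Q,V,\beta}(x,y))^{(2\beta+2)/(2\beta+1)})^{d/2}$ factor in \eqref{ker-est}.

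The main technical obstacle is the second step: the precise balance, via Young's inequality and the choice $\delta \sim \rho^{-2}$, between the gradient exponent $\beta/(\beta+1)$ imposed on $\psi$ and the weight $\gamma^{-\beta}$ in Hypothesis \ref{base-2}(iv) must produce a shift $\omega(\rho)$ of the sharp order $|\rho|^{2\beta+2}$. Any imprecision here propagates through the Legendre transform in the final step and spoils the exponent $(2\beta+2)/(2\beta+1)$ in \eqref{ker-est}.
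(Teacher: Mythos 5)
Your overall strategy is the right one and matches the paper's: Davies' exponential twisting $\af_\rho(\uu,\vv)=\af(e^{-\rho\psi}\uu,e^{\rho\psi}\vv)$ with $\psi$ subject to $(Q\nabla\psi,\nabla\psi)\le\lambda_V^{\beta/(\beta+1)}$, the Young-inequality balance $\lambda_V^{\beta/(\beta+1)}\le\gamma\lambda_V+{\rm const}\,\gamma^{-\beta}$ producing a shift of order $|\rho|^{2\beta+2}$, ultracontractivity by iteration, and finally the Legendre optimization in $\rho$ and the supremum over $\psi$. The last step and the identification of the critical exponent $(2\beta+2)/(2\beta+1)$ are exactly as in the paper.

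The genuine gap is in the iteration step. You only establish $L^2$-coercivity of $\af_\rho$, i.e.\ control of $\re\af_\rho(\uu,\uu)$, and then invoke ``a careful tracking of the $\omega(\rho)$-dependence in the Nash/Moser scheme.'' For this non-symmetric, vector-valued form coupled at first order, the iteration from $L^2$ to $L^\infty$ cannot be run on the $p=2$ coercivity alone: each step of the bootstrap interpolates an $L^2\to L^{2r/(r-2)}$ smoothing bound against an $L^{p}\to L^{p}$ bound for the twisted semigroup, and the latter requires verifying Nittka's criterion for $\af_\rho$, i.e.\ controlling $\re\af_\rho(\uu,|\uu|^{p-1}\operatorname{sign}\uu)$ for all $p\ge 2$. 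Your observation that the quadratic cross terms $\rho[(Q\nabla\psi\cdot\nabla u_i)\bar u_i-\overline{(Q\nabla\psi\cdot\nabla u_i)}u_i]$ are purely imaginary is correct, but it only helps at $p=2$; for $p\neq 2$ the twisted drifts $(\rho Q\nabla\psi)_h$ contribute genuinely through the $D_h|\uu|$ terms and must be treated as drift perturbations satisfying Hypotheses \ref{5bis} (this is why the paper writes $B^h_{\sigma,\psi}=B^h+(\sigma Q\nabla\psi)_hI$, $C^h_{\sigma,\psi}=C^h-(\sigma Q\nabla\psi)_hI$ and applies Theorem \ref{main-bis}). The resulting shift is not $\omega(\rho)$ but $\tau_{p,\rho}\sim p^{2\beta+2}(|\rho|+\kappa)^{2\beta+2}$, growing in $p$; absorbing this growth forces the iteration times to be chosen as $t_j\sim((R^{2\beta+1}+1)R)^{-j}$ so that $\sum_j p_j^{2\beta+2}t_j<\infty$. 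Omitting the $p$-dependence of the shift is exactly the kind of ``imprecision that propagates'' you warn about at the end: without it the exponential factor accumulated along the iteration diverges, and the prefactor and the constant $C_1$ in \eqref{ker-est} are not obtained. (A secondary, minor omission: the passage from $L^2\to L^\infty$ to $L^1\to L^\infty$ requires the duality step via the adjoint form $(\af_{\rho,\psi})^*=(\af^*)_{-\rho,\psi}$, and no truncation of $\psi$ is needed since $d_{Q,V,\beta}$ is defined over $C_c^\infty(\Omega)$.)
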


\begin{proof}
We introduce the set 
\begin{equation*}
\mathcal W=\Big\{\psi \in C^\infty_c(\Om):  (Q\nabla\psi, \nabla \psi)\leq \lambda_V^{\frac{\beta}{\beta+1}}\textit{ a.e. in $\Omega$}\Big\}
\end{equation*}
and observe that, if $\psi\in \mathcal{W}$, then  $e^{\sigma\psi}\V\subseteq \V$ for every $\sigma \in \R$.  Indeed, if $\uu\in\V$, then $(e^{\sigma\psi}-1)\uu$ belongs to $W^{1,2}_0(\Om;\C^m)$ , so it can be approximated in the $W^{1,2}(\Omega;\C^m)$-norm by a sequence $(\bm\varphi_n)$ of smooth and compactly supported functions. Since the supports of these functions can be assumed to be contained into a common compact set $K_0\subset\Omega$ and the matrix-valued functions $Q$ and $V$ are locally bounded, the sequence $(\bm\varphi_n)$ converges to $(e^{\sigma\psi}-1)\uu$ also in the $D_{Q,V}$-norm and this is enough to infer that $(e^{\sigma\psi}-1)\uu$ belongs to $\mathcal{V}$.
Thus, for every $\psi\in \mathcal W$ and $\sigma\in\R$, the form 
$\mathfrak{a}_{\sigma,\psi}:=
\mathfrak{a}(e^{-\sigma\psi}\cdot,e^{\sigma\psi}\cdot)$ is well-defined on $\mathcal V\times \mathcal V$.

We split the proof into four steps.

{\em Step 1.} Here, we prove that we can associate a strongly continuous and analytic semigroup $(\T^{\sigma,\psi}_2(t))_{t \ge 0}$ with the form $\af_{\sigma, \psi} $ in $L^2(\Om;\C^m)$ and that such a semigroup extends to a bounded semigroup in $L^p(\Om;\C^m)$ for every $p \in [2,\infty[$\,, which satisfies the estimate
\begin{equation}
\|\T^{\sigma,\psi}_p(t)\|_{{\mathcal L}(L^p(\Omega;\C^m))}
\le e^{\tau_{p,\sigma}t},\qquad\;\,t\in [0,\infty[\,,
\label{bryanadams-1}
\end{equation}
where
\begin{eqnarray*}
\tau_{p,\sigma} 
=c\bigg (\frac{4(\sigma^2+2\kappa|\sigma|)+p^2(|\sigma|+\kappa)^2}{4}\bigg )^{\beta+1}.
\end{eqnarray*}

A simple computation reveals that 
\begin{align*} 
&\af_{\sigma, \psi}(\uu,\vv)\notag\\
=&\int_\Omega\bigg[\sum_{i=1}^m(Q\nabla u_i, \nabla v_i) + \sum_{h=1}^d(B_{\sigma, \psi}^hD_h\uu, \vv) + \sum_{h=1}^d(C_{\sigma, \psi}^h\uu, D_h\vv) + ((V+W_{\sigma, \psi})\uu, \vv)\bigg]dx
\end{align*}
for every $\uu, \bm{v}\in\mathcal{V}$, where
\begin{eqnarray*} 
B^h_{\sigma, \psi}=B^h+(\sigma Q\nabla\psi)_hI,\qquad\;\,C^h_{\sigma, \psi}=C^h-(\sigma Q\nabla \psi)_hI
\end{eqnarray*}
for every $h=1, \dots, d$, and 
\begin{eqnarray*} 
W_{\sigma, \psi}=-\sigma^2 (Q\nabla\psi,\nabla\psi)I + \sigma \sum_{h=1}^d(C^h-B^h)D_h \psi.
\end{eqnarray*}

Note that
\begin{align*}
\bigg |\sum_{h=1}^d((Q\nabla\psi)_h\theta^h,\eta)\bigg |
=&\bigg |\sum_{i=1}^m(Q\nabla\psi,\theta_i)\overline\eta_i\bigg |
=\bigg |\sum_{i=1}^m(Q^{\frac{1}{2}}\nabla\psi,Q^{\frac{1}{2}}\theta_i)\overline\eta_i\bigg|
\le |Q^{\frac{1}{2}}\nabla\psi|\sum_{i=1}^m |Q^{\frac{1}{2}}\theta_i||\eta_i|\\
\le &|Q^{\frac{1}{2}}\nabla\psi||\eta|
\big (\mathcal{Q}(\theta^1,\ldots,\theta^d)\big )^{\frac{1}{2}}
\le \Big (\lambda_V^{\frac{\beta}{\beta+1}}|\eta|^2\Big )^{\frac{1}{2}}\big (\mathcal{Q}(\theta^1,\ldots,\theta^d)\big )^{\frac{1}{2}}\\
\le & \bigg(\gamma\lambda_V|\eta|^2+\frac{\beta^{\beta}}{(\beta+1)^{\beta+1}}\gamma^{-\beta}|\eta|^2\bigg )^{\frac{1}{2}}
\big (\mathcal{Q}(\theta^1,\ldots,\theta^d)\big )^{\frac{1}{2}}\\
\le & 
\bigg(\gamma (V_S\eta,\eta)+\frac{\beta^{\beta}}{(\beta+1)^{\beta+1}}\gamma^{-\beta}|\eta|^2\bigg )^{\frac{1}{2}}\big (\mathcal{Q}(\theta^1,\ldots,\theta^d)\big )^{\frac{1}{2}},
\end{align*}
where $\theta_i=(\theta^1_i,\ldots.\theta^d_i)$ for every $i=1,\ldots,m$. Hence, recalling that $c\ge 1$, we can estimate
\begin{align*}
\bigg |\sum_{h=1}^d(B^h_{\sigma,\psi}\theta^h,\eta)\bigg |
\le &
(\kappa+|\sigma|)\big (\gamma (V_S\eta,\eta)+c\gamma^{-\beta}|\eta|^2\big )^{\frac{1}{2}}
(\mathcal{Q}(\theta^1,\ldots,\theta^d))^{\frac{1}{2}},
\end{align*}
\begin{align*}
\bigg |\sum_{h=1}^d(C^h_{\sigma,\psi}\theta^h,\eta)\bigg |
\le &
(\kappa+|\sigma|)\big (\gamma (V_S\eta,\eta)+c\gamma^{-\beta}|\eta|^2\big )^{\frac{1}{2}}
(\mathcal{Q}(\theta^1,\ldots,\theta^d))^{\frac{1}{2}}.
\end{align*}
Moreover, arguing similarly, we get
\begin{align*}
|(W_{\sigma,\psi}\xi,\eta)|
\le &
(\sigma^2+2\kappa|\sigma|)\big (\gamma (V_S\xi,\xi)+c\gamma^{-\beta}|\xi|^2\big )^{\frac{1}{2}}\big (\gamma (V_S\eta,\eta)+c\gamma^{-\beta}|\eta|^2\big )^{\frac{1}{2}}.
\end{align*}

Thus, applying Theorem \ref{main-bis} with $\kappa_A=0$, $\kappa_B$, $\kappa_C$ and $\kappa_W$ being replaced by
$\kappa+|\sigma|$, $\kappa+|\sigma|$ and
$\sigma^2+2\kappa|\sigma|$, respectively, we conclude that we can associate a semigroup $(\T^{\sigma,\psi}_2(t))_{t\ge 0}$ with the form $\mathfrak{a}_{\sigma,\psi}$ in $L^2(\Om;\C^m)$ and this semigroup can be extended consistently to $L^p(\Omega;\C^m)$ for every $p\in ]1,\infty[$\,, with a strongly continuous semigroup $(\T^{\sigma,\psi}_p(t))_{t\ge 0}$, 
which satisfies estimate \eqref{bryanadams-1}. Due to the consistency of these semigroups, to ease the notation we simply write $\T^{\sigma,\psi}(t)$ instead of $\T^{\sigma,\psi}_p(t)$ for every $t\in [0,\infty[$\,. 

{\em Step 2}. Here, we prove that 
each operator $\T^{\sigma,\psi}(t)$ maps $L^2(\Omega;\C^m)$ into $L^{\frac{2r}{r-2}}(\Omega;\C^m)$ and there exists a positive constant $c_{r,d}$ such that
\begin{align}
\|\T^{\sigma,\psi}(t)\|_{\mathcal{L}(L^2(\Omega;\C^m);L^{\frac{2r}{r-2}}(\Omega;\C^m))}\le
 c_{r,d}\nu_0^{-\frac{d}{2r}}t^{-\frac{d}{2r}}
e^{\big (\hat\tau_{\frac{2r}{r-2},\sigma}+\frac{\nu_0}{2}\big ) t},\qquad\;\, t\in ]0,\infty[\,,
\label{pitbike-2}
\end{align}
where  $r=d$, if $d\ge 3$, and $r$ is any number larger than $2$, if $d=1,2$, and
\begin{equation}\label{hatau}
\hat\tau_{p,\sigma}=c\bigg (\sigma^2+2\kappa|\sigma|+\frac{p^2}{2}(|\sigma|+\kappa)^2\bigg )^{\beta+1}
\end{equation}
for every $p \in [2,\infty[$ and $\sigma \in\R$.

In what follows, sometimes,  we simply write $r^*$ instead of $\frac{2r}{r-2}$ to ease the notation.

We begin by observing that, for every $\uu\in \V$, we can estimate
\begin{align*}
&{\rm Re}\hskip 1pt \af_{\sigma,\psi}(\uu,\uu)\\
\ge &\int_{\Om}\bigg[\sum_{i=1}^m(Q\nabla u_i,\nabla u_i) -2(\kappa+|\sigma|)\bigg (\sum_{i=1}^m(Q\nabla u_i,\nabla u_i)\bigg )^{\frac{1}{2}}\big (\gamma(V_S\uu, \uu) + c\gamma^{-\beta}|\uu|^2\big )^{\frac{1}{2}}\bigg ]dx\\
&+\int_\Om \big [(V_S\uu,\uu)-(\sigma^2+2\kappa|\sigma|)\big (\gamma(V_S\uu, \uu) + c\gamma^{-\beta}|\uu|^2\big )\big ]dx\\
\ge &(1-\varepsilon) \int_{\Om}\sum_{i=1}^m(Q\nabla u_i,\nabla u_i)dx
+ \left( 1 - \gamma(\sigma^2+2\kappa|\sigma|)-\frac{\gamma(\kappa+|\sigma|)^2}{\varepsilon}\right) \int_{\Om}(V_S\uu, \uu)dx  \\
 &-c\gamma ^{-\beta}\bigg (\sigma^2+2\kappa|\sigma|+\frac{(\kappa+|\sigma|)^2}{\varepsilon}\bigg )\|\uu\|_2^2.
\end{align*}
for every $\varepsilon, \gamma>0$.
If we take $\varepsilon=\frac{1}{2}$
and $\gamma=(\sigma^2+2\kappa|\sigma|+2(|\sigma|+\kappa)^2)^{-1}$, then the second term in the last side of the previous chain of inequalities vanishes, and taking into account the definition of $\hat \tau_{2,\sigma}$ (see \eqref{hatau}), we obtain that
\begin{align}
{\rm Re}\hskip 1pt \af_{\sigma,\psi}(\uu,\uu)\ge 
&\frac{1}{2} \int_{\Om}\sum_{i=1}^m (Q\nabla u_i,\nabla u_i)dx
-\hat \tau_{2,\sigma}\|\uu\|_2^2\ge \frac{\nu_0}{2} \|\nabla \uu\|_2^2-\hat\tau_{2,\sigma}\|\uu\|_2^2.
\label{Sob}
\end{align}

Since $\mathcal{V}\hookrightarrow W^{1,2}(\Omega;\C^m)$, from the Sobolev embedding theorem it follows that
\begin{eqnarray*}
\|\uu\|_{\frac{2r}{r-2}}^2\leq \hat c_{r,d}^2 \|\uu\|_2^{2-\frac{2d}{r}}(\|\uu\|_2^2+\|\nabla \uu\|_2^2)^{\frac{d}{r}},
\end{eqnarray*}
i.e.,
\begin{eqnarray*}
\|\nabla\uu\|_2^2\|\uu\|_2^{\frac{2r}{d}-2}\ge
\hat c_{r,d}^{-\frac{2r}{d}}\|\uu\|_{\frac{2r}{r-2}}^{\frac{2r}{d}}-\|\uu\|_2^{\frac{2r}{d}},
\end{eqnarray*}
for every $\uu\in\mathcal{V}$ and for some positive constant $\hat c_{r,d}$. From this inequality and 
\eqref{Sob}, we conclude that
\begin{align}
\|\uu\|_2^{\frac{2r}{d}-2}{\rm Re}\hskip 1pt \af_{\sigma,\psi}(\uu,\uu)\ge 
&\frac{\nu_0}{2}\hat{c}_{r,d}^{-\frac{2r}{d}}\|\uu\|_{r^*}^{\frac{2r}{d}}- \bigg (\hat\tau_{2,\sigma}+\frac{\nu_0}{2}\bigg )\|\uu\|_2^{\frac{2r}{d}}
,\qquad\;\,\uu\in\mathcal{V}.
\label{Sob-1}
\end{align}

Now, we observe that the function
$t\mapsto\|e^{-\left (\hat\tau_{r^*,\sigma}+\frac{\nu_0}{2}\right ) t}\T^{\sigma,\psi}(t)\f\|_{r^*}^2$ is  decreasing for every $\f\in L^2(\Omega;\C^m) \cap L^{\frac{2r}{r-2}}(\Omega; \C^m)$, since $(e^{-\tau_{r^*,\sigma} t}\T^{\sigma, \psi}(t))_{t\ge 0}$ is a contraction semigroup on $L^{r^*}(\Omega;\C^m)$
and  $\tau_{r^*,\sigma}\le \hat\tau_{r^*,\sigma}$.
Thus, by applying  \eqref{Sob-1} with $\uu=e^{-\left (\hat\tau_{r^*,\sigma}+\frac{\nu_0}{2}\right ) s}\T^{\sigma,\psi}(s)\f$ and taking into account that 
\begin{align*}
\frac{d}{ds}\|\T^{\sigma,\psi}(s)\f\|_2^2 =- 2\re\af_{\sigma,\psi}(\T^{\sigma, \psi}(s)\f,\T^{\sigma, \psi}(s)\f),\qquad\;\, s\in ]0,\infty[\,,
\end{align*}
we get 
\begin{align*}
&(t-\varepsilon)\|e^{-\left (\hat\tau_{r^*,\sigma}+\frac{\nu_0}{2}\right )t}\T^{\sigma,\psi}(t)\f\|_{r^*}^{\frac{2r}{d}}\\
\le &\int_{\varepsilon}^t \|e^{-\left (\hat\tau_{r^*,\sigma}+\frac{\nu_0}{2}\right )s}\T^{\sigma,\psi}(s)\f\|_{r^*}^{\frac{2r}{d}} ds\\
\leq &  \frac{2\hat c_{r,d}^{\frac{2r}{d}}}{\nu_0}\bigg[\int_{\varepsilon}^te^{-\frac{2r}{d}\left (\hat\tau_{r^*,\sigma}+\frac{\nu_0}{2}\right ) s}\re\af_{\sigma, \psi}(\T^{\sigma, \psi}(s)\f,\T^{\sigma, \psi}(s)\f)\|\T^{\sigma,\psi}(s)\f\|_2^{\frac{2r}{d}-2} ds\\
&\qquad\;\,+\bigg (\hat\tau_{2,\sigma}+\frac{\nu_0}{2}\bigg )\int_{\varepsilon}^t e^{-\frac{2r}{d}\left (\hat\tau_{r^*,\sigma}+\frac{\nu_0}{2}\right ) s} \|\T^{\sigma, \psi}(s)\f\|_2^{\frac{2r}{d}}ds\bigg ]\\
=& \frac{2\hat c_{r,d}^{\frac{2r}{d}}}{\nu_0}
\bigg [-\frac{1}{2}\int_{\varepsilon}^te^{-\frac{2r}{d}\left (\hat\tau_{r^*,\sigma}+\frac{\nu_0}{2}\right ) s}\frac{d}{ds}(\|\T^{\sigma, \psi}(s)\f\|^2)\|\T^{\sigma, \psi}(s)\f\|_2^{\frac{2r}{d}-2} ds\\
&\qquad\;\;+\bigg (\hat\tau_{2,\sigma}+\frac{\nu_0}{2}\bigg )\int_{\varepsilon}^t e^{-\frac{2r}{d}\left (\hat\tau_{r^*,\sigma}+\frac{\nu_0}{2}\right ) s} \|\T^{\sigma, \psi}(s)\f\|_2^{\frac{2r}{d}}ds\bigg ]\\
=& \frac{2\hat c_{r,d}^{\frac{2r}{d}}}{\nu_0}
\bigg [-\frac{d}{2r}\int_{\varepsilon}^te^{-\frac{2r}{d}\left (\hat\tau_{r^*,\sigma}+\frac{\nu_0}{2}\right ) s}\frac{d}{ds}(\|\T^{\sigma, \psi}(s)\f\|_2^{\frac{2r}{d}}) ds\\
&\qquad\;\;+\bigg (\hat\tau_{2,\sigma}+\frac{\nu_0}{2}\bigg )\int_{\varepsilon}^t e^{-\frac{2r}{d}\left   (\hat\tau_{r^*,\sigma}+\frac{\nu_0}{2}\right ) s} \|\T^{\sigma, \psi}(s)\f\|_2^{\frac{2r}{d}}ds\bigg ]\\
= &\frac{2\hat c_{r,d}^2}{\nu_0}\bigg [\frac{d}{2r}e^{-\frac{2r}{d}\left (\hat\tau_{r^*,\sigma}+\frac{\nu_0}{2}\right )\varepsilon} \|\T^{\sigma,\psi}(\varepsilon)\f\|_2^{\frac{2r}{d}}+(\hat\tau_{2,\sigma}-\hat\tau_{r^*,\sigma})\int_{\varepsilon}^t
e^{-\frac{2r}{d}\left (\hat\tau_{r^*,\sigma}+\frac{\nu_0}{2}\right ) s}
\|\T^{\sigma, \psi}(s)\f\|_2^{\frac{2r}{d}} ds\\
&\qquad\quad-\frac{d}{2r}e^{-\frac{2r}{d}\left (\hat\tau_{r^*,\sigma}+\frac{\nu_0}{2}\right ) t}\|\T^{\sigma, \psi}(t)\f\|_2^{\frac{2r}{d}}\bigg ]\\
\le &
\frac{\hat c_{r,d}^2d}{\nu_0r}\|\f\|_2^{\frac{2r}{d}}
\end{align*}
for every $t\in ]\varepsilon,\infty[$\,, since $\hat\tau_{2,\sigma}<\hat\tau_{r^*,\sigma}$ and $e^{-\frac{2r}{d}\left (\hat\tau_{r^*,\sigma}+\frac{\nu_0}{2}\right )\varepsilon} \|\T^{\sigma,\psi}(\varepsilon)\f\|_2^{\frac{2r}{d}}\le\|\f\|_2^{\frac{2r}{d}}$, i.e.,
\begin{align*}
\|\T^{\sigma,\psi}(t)\f\|_{\frac{2r}{r-2}}\le
\bigg (\frac{\hat c^2_{r,d}d}{\nu_0r}\bigg )^{\frac{d}{2r}}
(t-\varepsilon)^{-\frac{d}{2r}}
e^{\left (\hat\tau_{r^*,\sigma}+\frac{\nu_0}{2}\right ) t}\|\f\|_2,\qquad\;\,t\in ]\varepsilon,\infty[\,.
\end{align*}

Letting $\varepsilon$ tend to zero and, then, using a density argument, estimate \eqref{pitbike-2} follows at once with $c_{r,d}=\hat c_{r,d}^{\frac{d}{r}}d^{\frac{d}{2r}}r^{-\frac{d}{2r}}$.

{\em Step 3}.
Here, we prove that the semigroup $(\T^{\sigma, \psi}(t))_{t \ge 0}$ is ultrabounded, i.e., each operator $\T^{\sigma,\psi}(t)$ is bounded from $L^2(\Om;\C^m)$ into $L^\infty(\Om;\C^m)$ and it satisfies the estimate
\begin{equation}
\label{aim_infty}
\|\T^{\sigma, \psi}(t)\|_{\mathcal{L}(L^2(\Omega;\C^m);L^{\infty}(\Omega;\C^m))}\le C_{d,\beta} \nu_0^{-\frac{d}{4}}(t^{-1}+H(|\sigma|^{2\beta+2}+1))^{\frac{d}{4}}e^{\tau_{2,\sigma}t},\qquad\;\,t\in ]0,\infty[,
\end{equation}
for some positive constants $C_{d,\beta}$ and $H=H(d,\beta,c,\kappa,\nu_0)$.

Interpolating estimates \eqref{bryanadams-1} and \eqref{pitbike-2}, thanks to Riesz-Thorin theorem, we deduce that
\begin{align}\label{lisa}
\|\T^{\sigma, \psi}(t)\|_{\mathcal{L}(L^p(\Omega;\C^m);L^{\frac{pr}{r-1}}(\Omega;\C^m))} &\leq \|\T^{\sigma, \psi}(t)\|
_{\mathcal{L}(L^2(\Omega;\C^m);L^{\frac{2r}{r-2}}(\Omega;\C^m))}^{\frac{1}{p}}\! \|\T^{\sigma, \psi}(t)\|_{\mathcal{L}(L^{2(p-1)}(\Omega;\C^m))}^{1-\frac{1}{p}}\notag\\
&\leq  \Big (c_{r,d}\nu_0^{-\frac{d}{2r}}t^{-\frac{d}{2r}}e^{\left (\hat\tau_{r^*,\sigma}+\frac{\nu_0}{2}\right ) t}\Big )^{\frac{1}{p}}
\left(e^{\tau_{2(p-1),\sigma}t}\right)^{1-\frac{1}{p}}\notag\\
&=\big (c_{r,d}^2\nu_0^{-\frac{d}{r}}\big )^{\frac{1}{2p}}t^{-\frac{d}{2pr}}
e^{\rho_{p,\sigma,\nu_0} t}
\end{align}
for every $t\in ]0,\infty[$ and $p\in [2,\infty[$\,, where $\rho_{p,\sigma,\nu_0}=\frac{1}{p}\left (\hat\tau_{r^*,\sigma}+\frac{\nu_0}{2}+(p-1)\tau_{2(p-1),\sigma}\right )$.

Now, similarly to  the proof of 
\cite[Theorem 6.8]{Ou}, we set
$R=(r-1)^{-1}r$, $t_j=
 \frac{(R^{2\beta+1}+1)R-1}{(R^{2\beta+1}+1)R}((R^{2\beta+1}+1) R)^{-j}$ and  $p_j=2R^j$ for every $j\in\N\cup\{0\}$.
Since 
\begin{eqnarray*}
\begin{array}{lll}
\displaystyle\sum_{j=0}^{\infty}t_j=1,
\qquad\;\,&\displaystyle\sum_{j=0}^{\infty}\frac{1}{p_j}=\frac{r}{2},\\[4mm]
\displaystyle\sum_{j=0}^{\infty} \frac{t_j}{p_j} =
\frac{R^2(R^{2\beta+1}+1)-R}{2(R^2(R^{2\beta+1}+1)-1)}
=:A_{r,\beta}<1,\qquad\;\, &
\displaystyle\prod_{j=0}^{\infty} t_j^{-\frac{1}{2p_j}}=:B_{r,\beta}\in ]0,\infty[
\end{array}
\end{eqnarray*}
and
\begin{align*}
\sum_{j=0}^{\infty} \frac{(p_j-1)^{2\beta+3}}{p_j}t_j
\le \sum_{j=0}^{\infty}p_j^{2\beta+2}t_j
=&\frac{2^{2\beta+2}}{R}[(R^{2\beta+1}+1)R-1]
=:L_{r,\beta}<\infty,
\end{align*}
thanks to \eqref{lisa}, we can estimate
\begin{align}\label{risate}
&\bigg\|\T^{\sigma,\psi}\bigg (t\sum_{j=0}^{n-1}t_j\bigg )\bigg\|_{\mathcal{L}(L^2(\Omega;\C^m);L^{p_n}(\Omega;\C^m))}\notag\\
\le &\prod_{j=0}^{n-1}\|\T^{\sigma,\psi}(tt_j)\|_{\mathcal{L}(L^{p_j}(\Omega;\C^m);L^{p_{j+1}}(\Omega;\C^m))}\notag\\
\le &\prod_{j=0}^{n-1}(c_{r,d}^2\nu_0^{-\frac{d}{r}}\big )^{\frac{1}{2p_j}}(t_jt)^{-\frac{d}{2p_jr}}
e^{\rho_{p_j,\sigma,\nu_0}tt_j}\notag\\
\le &(c_{r,d}^2\nu_0^{-\frac{d}{r}}\big )^{\sum_{j=0}^{n-1}\frac{1}{2p_j}}
t^{-\frac{d}{r}\sum_{j=0}^{n-1}\frac{1}{2p_j}}
\bigg (\prod_{j=0}^{n-1} t_j^{-\frac{1}{2p_j}}\bigg )^{\frac{d}{r}}
e^{t\sum_{i=0}^{n-1}\rho_{p_i,\sigma,\nu_0}t_i}.
\end{align}

Recalling that $(a+b)^q\le 2^{q-1}(a^q+b^q)$ for every $a,b\in [0,\infty[$ and $q\in ]1,\infty[$\,,  we can easily estimate
\begin{align}\label{est_tau}
\hat\tau_{p, \sigma}&\le
c\bigg(1+\frac{p^2}{2}\bigg)^{\beta+1}(|\sigma|+\kappa)^{2\beta+2}\notag\\
&\le c p^{2\beta+2}2^{2\beta+1}(|\sigma|^{2\beta+2}+\kappa^{2\beta+2})\notag\\
&\le\hat H p^{2\beta+2}(|\sigma|^{2\beta+2}+1),
\end{align}
for every $p \in [2,\infty[$\,, where $\hat H= 2^{2\beta+1}c\max\{1,\kappa^{2\beta+2}\}$; whence, since $\tau_{2(p-1),\sigma}\le \hat{\tau}_{2(p-1),\sigma}$ and $2(p-1)\ge 2$, we can use \eqref{est_tau} twice to get 
\begin{align*}
\sum_{j=0}^{n-1}\rho_{p_j,\sigma,\nu_0}t_j\le &
\sum_{j=0}^{\infty}\rho_{p_j,\sigma,\nu_0}t_j\notag\\
=& \bigg (\hat\tau_{r^*,\sigma}+\frac{\nu_0}{2}\bigg )\sum_{j=0}^\infty \frac{t_j}{p_j}+\sum_{j=0}^\infty\frac{t_j(p_j-1)}
    {p_j}\tau_{2(p_j-1),\sigma}\notag\\
\le &\hat  H (|\sigma|^{2\beta+2}+1)\bigg ((r^*)^{2\beta+2} \sum_{j=0}^\infty\frac{t_j}{p_j} 
+2^{2\beta+2}\sum_{j=0}^\infty\frac{(p_j-1)^{2\beta+3}}{p_j}t_j\bigg )+\frac{\nu_0}{2}\sum_{j=0}^{\infty}\frac{t_j}
{p_j}\notag\\
\le &
(r^*)^{2\beta+2}  \hat H(A_{r,\beta} 
+L_{r,\beta})(|\sigma|^{2\beta+2}+1)+\frac{\nu_0}{2}A_{r,\beta}\notag\\
\le & \max\bigg\{(r^*)^{2\beta+2}  \hat H,\frac{\nu_0}{2}\bigg\}(2A_{r,\beta} 
+L_{r,\beta})(|\sigma|^{2\beta+2}+1)\notag\\
=&\!:H(|\sigma|^{2\beta+2}+1).
\end{align*}

Now, we fix a bounded open set $\Omega_0\subset\Omega$ and $k,n\in\N$, with $k<n$. Using H\"older inequality and \eqref{risate}, we can estimate
\begin{align}
&\bigg\|\T^{\sigma,\psi}\bigg ( t\sum_{j=0}^{n-1}t_j\bigg )\f\bigg\|_{L^{p_k}(\Omega_0;\C^m)}\notag\\
\le &\bigg\|\T^{\sigma,\psi}\bigg ( t\sum_{j=0}^{n-1}t_j\bigg )\f\bigg\|_{L^{p_n}(\Omega_0;\C^m)}
|\Omega_0|^{\frac{1}{p_k}-\frac{1}{p_n}}\notag\\
\le &\bigg\|\T^{\sigma,\psi}\bigg ( t\sum_{j=0}^{n-1}t_j\bigg )\bigg\|_{\mathcal{L}(L^2(\Omega;\C^m);L^{p_n}(\Omega;C^m))}
|\Omega_0|^{\frac{1}{p_k}-\frac{1}{p_n}}\|\f\|_{2}\notag\\
\le & 
(c_{r,d}^2\nu_0^{-\frac{d}{r}})^{\sum_{j=0}^{n-1}\frac{1}{2p_j}}
t^{-\frac{d}{r}\sum_{j=0}^{n-1}\frac{1}{2p_j}}
\bigg (\prod_{j=0}^{n-1} t_j^{-\frac{1}{2p_j}}\bigg )^{\frac{d}{r}}
|\Omega_0|^{\frac{1}{p_k}-\frac{1}{p_n}}e^{H(|\sigma|^{2\beta+2}+1)t}\|\f\|_{2}.
\label{stima-sem}
\end{align}

Note that the function  $\T^{\sigma,\psi}(\cdot)\f$ is continuous in $]0,\infty[$ with values in $L^{p_k}(\Omega;\C^m)$.
Indeed, for every $\varepsilon\in ]0,\infty[$ and $t\in [\varepsilon,\infty[$\,, we can split
$\T^{\sigma,\psi}(t)\f=\T^{\sigma,\psi}(t-\varepsilon)\T^{\sigma,\psi}(\varepsilon)\f$ and the function 
$\T^{\sigma,\psi}(\varepsilon)\f$ belongs to $L^{p_k}(\R^d;\C^m)$ as it can be easily seen, using the previous bootstrap argument, writing
$\T^{\sigma,\psi}(\varepsilon)\f=
\T^{\sigma,\psi}(\varepsilon k^{-1})\cdots\T^{\sigma,\psi}(\varepsilon k^{-1})\f$. The strong continuity of the semigroup $(\T^{\sigma,\psi}(t))_{t\ge 0}$ in $L^{p_k}(\Omega;\C^m)$ implies that the function
$\T^{\sigma,\psi}(\cdot)\f$ is continuous in $[\varepsilon,\infty[$. The arbitrariness of $\varepsilon$ yields the claim.

Letting $n$ tend to $\infty$ in the first and last side of \eqref{stima-sem} and taking all the above arguments into account, we conclude that
\begin{align*}
\|\T^{\sigma,\psi}(t)\f\|_{L^{p_k}(\Omega_0;\C^m)}
\le & c_{r,d}^{\frac{r}{2}}\nu_0^{-\frac{d}{4}}B_{r,\beta}^{\frac{d}{r}}|\Omega_0|^{\frac{1}{p_k}}
t^{-\frac{d}{4}}
e^{H(|\sigma|^{2\beta+2}+1)t}\|\f\|_{2}.
\end{align*}

Next, letting $k$ tend to $\infty$, we infer that the
function $\T^{\sigma,\psi}(t)\f$ is bounded in $\Omega_0$ and
\begin{align*}
\|\T^{\sigma,\psi}(t)\f\|_{L^{\infty}(\Omega_0;\C^m)}
=\lim_{k\to\infty}\|\T^{\sigma,\psi}(t)\f\|_{L^{p_k}(\Omega_0;\C^m)}
\le  c_{r,d}^{\frac{r}{2}}\nu_0^{-\frac{d}{4}}
B_{r,\beta}^{\frac{d}{r}}
t^{-\frac{d}{4}}
e^{H(|\sigma|^{2\beta+2}+1)t}\|\f\|_{2}.
\end{align*}

Finally, observing that $L^{\infty}$-norm of the function $\T^{\sigma,\psi}(t)\f$ is bounded by a constant, independent of $\Omega_0$, we conclude that
\begin{align}
\|\T^{\sigma,\psi}(t)\f\|_{L^{\infty}(\Omega;\C^m)}
\le  c_{r,d}^{\frac{r}{2}}\nu_0^{-\frac{d}{4}}
B_{r,\beta}^{\frac{d}{r}}t^{-\frac{d}{4}}
e^{H(|\sigma|^{2\beta+2}+1)t}\|\f\|_{2}.
\label{corso-2}
\end{align}

We can improve the behaviour of the function $t\mapsto\|\T^{\sigma, \psi}(t)\|_{\mathcal{L}(L^2(\Omega;\C^m);L^{\infty}(\Omega;\C^m))}$ at infinity  taking advantage of \cite[Lemma 6.5]{Ou}, which allows to control the growth of this function by the exponential growth of the function
$t\mapsto\|\T^{\sigma, \psi}(t)\|_{\mathcal{L}(L^2(\Omega;\C^m))}$ modulo an additional polynomial term which grows like $t^{\frac{d}{4}}$ at infinity. More precisely, we can write
\begin{align*}
\|\T^{\sigma,\psi}(t)\f\|_{\infty}
\le  c_{r,d}^{\frac{r}{2}}\nu_0^{-\frac{d}{4}}
B_{r,\beta}^{\frac{d}{r}}e
\big (t^{-1}+H(|\sigma|^{2\beta+2}+1)\big )^{\frac{d}{4}}e^{\tau_{2,\sigma}t}\|\f\|_2,\qquad\;\,t\in [0,\infty[\,.
\end{align*}
whence \eqref{aim_infty} follows with $C_{d,\beta}=c_{r,d}^{\frac{r}{2}}B_{r,\beta}^{\frac{d}{r}}e$ and taking $r=d$, if $d \ge 3$ and $r=3$ if $d=1,2$.

{\em Step 4.} Here, we complete the proof by showing that the semigroup $(\T^{\sigma, \psi}(t))_{t \ge 0}$ is ultracontractive, i.e., each operator $\T^{\sigma, \psi}(t)$ is bounded from $L^1(\Om;\C^m)$ into $L^\infty(\Om;\C^m)$, and by proving the kernel estimate \eqref{ker-est}.

Since $(\af_{\sigma,\psi})^*= (\af^*)_{ -\sigma, \psi}$ and the coefficients of the adjoint form $\af^*$ satisfy the same assumptions as those of the form $\af$, the semigroup $((\T^{\sigma, \psi}(t))^*)_{t\ge 0}$ satisfies estimate \eqref{aim_infty} as well; namely
\begin{align*}
\|(\T^{\sigma, \psi}(t))^*\|_{\mathcal{L}(L^2(\Omega;\C^m);L^{\infty}(\Omega;\C^m))} \leq & C_{d,\beta}\nu_0^{-\frac{d}{4}}
\big (t^{-1}+ H(|\sigma|^{2\beta+2}+1)\big )^{\frac{d}{4}}e^{\tau_{2,\sigma}t},\qquad\;\,t\in ]0,\infty[\,,
\end{align*}
so that, by duality
\begin{align}
\|\T^{\sigma, \psi}(t)\|_{\mathcal{L}(L^1(\Omega;\C^m);L^2(\Omega;\C^m))} \leq & C_{d,\beta}\nu_0^{-\frac{d}{4}}
(t^{-1}+ H(|\sigma|^{2\beta+2}+1))^{\frac{d}{4}}e^{\tau_{2,\sigma}t},\qquad\;\,t\in ]0,\infty[.
\label{corso-3}
\end{align}

Using \eqref{corso-2} and \eqref{corso-3}, we can estimate
\begin{align}
&\|\T^{\sigma, \psi}(t)\|_{\mathcal{L}(L^1(\Omega;\C^m);L^{\infty}(\Omega;\C^m))}\notag\\ \le &\|\T^{\sigma, \psi}(t/2)\|_{\mathcal{L}(L^2(\Omega;\C^m);L^{\infty}(\Omega;\C^m))}
\|\T^{\sigma, \psi}(t/2)\|_{\mathcal{L}(L^1(\Omega;\C^m);L^2(\Omega;\C^m))}\notag\\
\le & C_{d,\beta}^2\nu_0^{-\frac{d}{2}}
\big (2t^{-1}+H(|\sigma|^{2\beta+2}+1)\big )^{\frac{d}{2}}e^{\tau_{2,\sigma}t}\notag\\
\le & 2^{\frac{d}{2}}C_{d,\beta}^2\nu_0^{-\frac{d}{2}}
\big (t^{-1}+ H(|\sigma|^{2\beta+2}+1)\big )^{\frac{d}{2}}e^{2^{2\beta+2}\hat H(|\sigma|^{2\beta+2}+1)t}
\label{stim}
\end{align}
for every $t\in ]0,\infty[$\,.

Taking $\sigma=0$ in \eqref{stim}, it follows that the operator $\T(t)$ is bounded from $L^1(\Omega; \C^m)$ into $L^{\infty}(\Omega;\C^m)$ for every $t>0$. Hence, it is described by a kernel $\bm{k}(t,\cdot, \cdot) \in L^{\infty}(\Omega; \C^{m\times m})$.
Moreover, since
$\T^{\sigma,\psi}(t)=e^{\sigma\psi}\T(t)(e^{-\sigma\psi}\cdot)$ for every $t\in [0,\infty[$\,, the kernel of $\T^{\sigma, \psi}(t)$  is the function $(x,y)\mapsto e^{\sigma \psi(x)} \bm{k} (t,x,y)e^{-\sigma \psi(y)}$ and  satisfies the estimate
\begin{align*}
&e^{\sigma \psi(x)} |k_{ij}(t,x,y)| e^{-\sigma \psi(y)}
\le 2^{\frac{d}{2}}C_{d,\beta}^2\nu_0^{-\frac{d}{2}}
\big (t^{-1}+ H(|\sigma|^{2\beta+2}+1)\big )^{\frac{d}{2}}e^{2^{2\beta+2}\hat H(|\sigma|^{2\beta+2}+1)t}
\end{align*}
for every $i,j\in\{1, \dots, m\}$, for every $t\in ]0,\infty[$ and for almost every $x,y\in\Omega$.
Therefore,
\begin{align*}
|k_{ij}(t,x,y)| \leq 2^{\frac{d}{2}}C_{d,\beta}^2\nu_0^{-\frac{d}{2}}
\big (t^{-1}+ H(|\sigma|^{2\beta+2}+1)\big )^{\frac{d}{2}}e^{2^{2\beta+2}\hat H(|\sigma|^{2\beta+2}+1)t+\sigma(\psi(y)-\psi(x))}
\end{align*}
for every $t\in ]0,\infty[$ and for almost every $x, y \in \Omega$.
Taking 
\begin{eqnarray*}
\sigma = (2^{2\beta +2}(2\beta+2)\hat Ht)^{-\frac{1}{2\beta+1}}|\psi(x)-\psi(y)|^{-\frac{2\beta}{2\beta+1}}(\psi(x)-\psi(y))
\end{eqnarray*}
 we obtain
\begin{align*}
|k_{ij}(t,x,y)|\leq &
2^{\frac{d}{2}}C_{d,\beta}^2\nu_0^{-\frac{d}{2}}
\Bigg (t^{-1}+ H+ H \left(\frac{|\psi(x)-\psi(y)|}{2^{2\beta +2}\hat H(2\beta+2)t}\right)^{\frac{2\beta+2}{2\beta+1}}\Bigg )^{\frac{d}{2}}e^{2^{2\beta+2}\hat Ht}\\
&\qquad\times \exp\bigg ({-\displaystyle{\frac{2\beta+1}{2\beta+2}} \frac{|\psi(x)-\psi(y)|^{\frac{2\beta+2}{2\beta+1}}}{(2^{2\beta +2}\hat H(2\beta+2)t)^{\frac{1}{2\beta+1}}}}\bigg )\\
&\leq 2^{\frac{d}{2}}C_{d,\beta}^2\nu_0^{-\frac{d}{2}}
\bigg (t^{-1}+ H+ H_1 \big (t^{-1}d_{Q,V,\beta}(x,y)\big )^{\frac{2\beta+2}{2\beta+1}}\bigg )^{\frac{d}{2}}e^{2^{2\beta+2}\hat Ht}\\
&\qquad\times \exp\bigg ({-\displaystyle{\frac{2\beta+1}{2\beta+2}} \frac{|\psi(x)-\psi(y)|^{\frac{2\beta+2}{2\beta+1}}}{(2^{2\beta +2}\hat H(2\beta+2)t)^{\frac{1}{2\beta+1}}}}\bigg )
\end{align*}
for every $t\in ]0,\infty[$ and for almost every $x, y \in \Omega$. Here, $H_1$ is a constant depending on $\beta$, $d$, $\kappa$, $c$ and $\nu_0$.
 By minimizing over $\psi$, estimate \eqref{ker-est} follows.
\end{proof}

\begin{coro} Under the assumptions of Theorem $\ref{Gauss}$, further suppose that 
$Q\in L^\infty(\Omega;\R^{d\times d})$ 
and  $B_h, C_h\in L^\infty(\Omega;\R^{m\times m})$ for every $h=1, \dots, d$.
Then, the  kernel $\bm{k}(t,\cdot, \cdot) \in L^\infty(\Omega\times \Omega; \C^{m\times m})$ of the semigroup $(\T(t))_{t\ge 0}$ satisfies the estimate
\begin{align}
|k_{ij}(t,x,y)|\leq 
C_0\left(1+t^{-1}+t^{-2}|x-y|^2\right )^{\frac{d}{2}}e^{C_1t-C_2\frac{|x-y|^2}{t}}
\label{ker-est-1}
\end{align}
for every $i,j=1, \ldots,m$, for almost every $x, y \in \Omega$ and for some positive constants $C_0$, $C_1$ and $C_2$.

Moreover, if $\Omega=\Rd$, then there exist $\omega>0$ and $\mu_0 \in [0,\frac\pi 2[$ such that  the operator $A_p+\omega$, where $A_p$  is the generator of the extrapolated semigroup to $L^p(\Rd, \Cm)$,  admits a bounded $H^\infty(\Sigma_\mu)$-calculus in $L^p(\Rd, \Cm)$ for any $\mu\in [\mu_0,\pi)$ and every $p\in (1, \infty)$.
\end{coro}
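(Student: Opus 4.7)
The plan is to derive both assertions by adapting the machinery of Theorem \ref{Gauss}. For the kernel estimate \eqref{ker-est-1}, the key observation is that boundedness of $Q$, $B^h$ and $C^h$ allows us to replace the admissible class $\mathcal{W}$ by
$\mathcal{W}_0 = \{\psi \in C_c^\infty(\Omega) : |\nabla \psi| \le 1\}$,
for which $(Q\nabla\psi,\nabla\psi) \le \|Q\|_\infty$ and the induced pseudo-distance is equivalent to the Euclidean one. For $\psi \in \mathcal{W}_0$, the perturbed coefficients $B^h_{\sigma,\psi} = B^h + \sigma(Q\nabla\psi)_h I$, $C^h_{\sigma,\psi}=C^h-\sigma(Q\nabla\psi)_hI$ and $W_{\sigma,\psi} = -\sigma^2(Q\nabla\psi,\nabla\psi)I+\sigma\sum_h (C^h-B^h)D_h\psi$ are uniformly bounded by constants times $(|\sigma|+1)$ and $(|\sigma|+1)^2$, respectively, with \emph{no} $\gamma^{-\beta}$ dependence required. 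This corresponds formally to the effective value $\beta = 0$ in Hypothesis \ref{base-2}(iv), and the constants $\hat\tau_{p,\sigma}$ in Step~1 become of order $p^2(|\sigma|+\kappa)^2$ instead of its $(\beta+1)$-th power.

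Running through Steps 1--4 of the proof of Theorem \ref{Gauss} verbatim, but with these sharper (linear in $\sigma^2$) bounds, produces an ultracontractive estimate of the form
\begin{equation*}
\|\T^{\sigma,\psi}(t)\|_{\mathcal{L}(L^1(\Omega;\C^m);L^\infty(\Omega;\C^m))} \le C\,(t^{-1}+|\sigma|^2+1)^{d/2}\,e^{C(|\sigma|^2+1)t}.
\end{equation*}
Writing the kernel of $\T^{\sigma,\psi}(t)$ as $e^{\sigma\psi(x)}k(t,x,y)e^{-\sigma\psi(y)}$ and optimizing pointwise over $\sigma$ with the now-standard choice $\sigma \sim (\psi(x)-\psi(y))/t$ (rather than the fractional power needed in Theorem \ref{Gauss}) yields the Gaussian exponent $-C_2|x-y|^2/t$ after taking the supremum over $\psi\in\mathcal{W}_0$. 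The main technical obstacle is the careful rerun of Step~3: one must verify that the bootstrap using Sobolev embedding and Hölder interpolation delivers the same power of $t^{-d/4}$ at short time while keeping the exponential factor quadratic in $\sigma$, since this is what makes the Davies-trick optimization collapse to the classical exponent $2$.

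For the $H^\infty$-calculus assertion on $\Omega = \Rd$, the strategy is to combine the Gaussian kernel bound \eqref{ker-est-1} with the analyticity of the extrapolated semigroup. By Theorem \ref{main-bis}, $(\T(t))_{t\geq 0}$ extends consistently to a bounded semigroup $(\T_p(t))_{t\geq 0}$ on $L^p(\Rd;\C^m)$ for every $p\in(1,\infty)$; analyticity on $L^2(\Rd;\C^m)$ coming from the form method in Theorem \ref{TH:coerc} combined with the Gaussian bound transfers, by a standard argument, to analyticity on $L^p(\Rd;\C^m)$ with angle uniform in $p$. Thus there exist $\omega>0$ and $\mu_0\in[0,\pi/2)$ so that $A_p+\omega$ is sectorial of angle $\mu_0$ on each $L^p(\Rd;\C^m)$. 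Applying the classical theorem that an $L^2$-sectorial operator whose associated semigroup satisfies a Gaussian upper bound admits a bounded $H^\infty(\Sigma_\mu)$-calculus on every $L^p$-space for $\mu>\mu_0$ and $p\in(1,\infty)$ (componentwise extension to the vector-valued setting being routine, since \eqref{ker-est-1} holds for each entry $k_{ij}$), we obtain the claim.
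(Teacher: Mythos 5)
Your treatment of the kernel estimate is essentially the paper's own argument: the boundedness of $B^h$, $C^h$ lets one take $\beta=0$ in Hypothesis \ref{base-2}(iv), and the boundedness of $Q$ (together with $\lambda_Q\ge\nu_0$) makes $d_{Q,V,0}$ equivalent to the Euclidean distance, so \eqref{ker-est-1} is just \eqref{ker-est} with $\beta=0$. You propose to rerun Steps 1--4 of Theorem \ref{Gauss} with the sharper quadratic-in-$\sigma$ bounds, whereas the paper simply invokes Theorem \ref{Gauss} with $\beta=0$; both routes are correct and yield the exponent $\tfrac{2\beta+2}{2\beta+1}=2$.

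The second part, however, has a genuine gap. The ``classical theorem'' you invoke --- that an operator which is sectorial on $L^2$ and whose semigroup satisfies Gaussian upper bounds admits a bounded $H^\infty(\Sigma_\mu)$-calculus on every $L^p$ --- is not a correct statement of the extrapolation results in the literature. Sectoriality on a Hilbert space does \emph{not} imply a bounded $H^\infty$-calculus there (McIntosh's counterexamples), and the Duong--Robinson/Duong--McIntosh type theorems (and \cite[Theorem 3.6]{ALLR}, which the paper uses) all take as an input hypothesis that the operator \emph{already has} a bounded $H^\infty$-calculus on $L^2$, in addition to the Gaussian bounds; only then do they transfer the calculus to $L^p$, $1<p<\infty$, with the stated angle. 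Your argument never establishes the $L^2$-calculus. The paper closes exactly this gap: since $\A_\V+\tilde c$ is associated with a continuous elliptic form, it is invertible and $m$-accretive, its numerical range lies in a sector $\Sigma_{\mu_0}$ with $\mu_0<\tfrac{\pi}{2}$, and the Crouzeix--Delyon theorem (\cite[Corollary 7.1.17]{Haase}) then yields a bounded $H^\infty(\Sigma_\mu)$-calculus on $L^2(\Rd;\Cm)$ for $\mu\ge\mu_0$; only after this step do the Gaussian bounds for $-\A_\V-\omega$ allow the extrapolation to $L^p$. Note also that obtaining the calculus for angles $\mu<\tfrac{\pi}{2}$, as the corollary claims, is impossible from analyticity plus Gaussian bounds alone; it is precisely the numerical-range localization on $L^2$ that delivers the small angle $\mu_0$. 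You should insert the $m$-accretivity/Crouzeix--Delyon step before appealing to any kernel-based extrapolation theorem.
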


\begin{proof}
Regarding the Gaussian estimate, it is sufficient to observe that, due to the boundedness of $B_h$ and $C_h$, a suitable choice of the constants $\kappa$ and $c$ in Hypothesis \ref{base-2}(iv) allows one to take $\beta=0$. In this case, the boundedness of $Q$ implies that $d_{Q,V}$ is equivalent to the Euclidean metric. Estimate \eqref{ker-est-1} follows.

Now, we consider the second part of the statement and observe that, since $\beta=0$,  under the assumptions of Theorem \ref{Gauss}, there exists $\tilde c>c\gamma_2^{-1}$ (where $c$ is the constant in Hypothesis \ref{base-2}(iv)), such that the operator $A_\V+\tilde c$   is invertible and $m$-accretive in $L^2(\Rd, \Cm)$. Moreover, there exists $\mu_0\in [0,\frac\pi 2[$ such that the  numerical range  of $A_\V+\tilde c$ is contained in the sector
$\Sigma_{\mu_0}=\{\lambda\in\C: |\text{arg}(\lambda)| \leq \mu_0\}$ 
(see e.g.,  \cite[Proposition 7.3.4]{Haase}).
As a consequence, by  a result of Crouzeix and Delyon \cite{CD-1} (see also \cite[Corollary 7.1.17]{Haase}), the operator $A_\V+\tilde c$ admits a bounded  $H^\infty(\Sigma_\mu)$-calculus in $L^2(\Rd, \Cm)$ for every $\mu\in [\mu_0, \pi]$. 

Finally, by possibly choosing a suitable $\omega>\tilde c$, we obtain that the kernel $\tilde{\bm k}(t, \cdot, \cdot)$ of the semigroup generated by $-\A_\V-\omega$ satisfies a Gaussian estimate of the type:
\begin{eqnarray*} 
|\tilde k_{ij}(t,x,y)|\leq 
\tilde C_0t^{-\frac d 2}e^{-\tilde C_2\frac{|x-y|^2}{t}},
\end{eqnarray*}
for every $t>0$ and for almost every $x,y\in\Omega$. Now, the assertion follows by applying \cite[Theorem 3.6]{ALLR}. 
\end{proof}

\section{Examples}
\label{sect-5}

In this section, we provide some classes of systems of elliptic operators to which the main results of the paper apply.

Let $\Omega$ be an open subset of $\Rd$.

\subsection{Assumptions on $Q$}
\label{subsect-5.1}
Let $Q$ be a matrix-valued function such that
\begin{enumerate}[\rm (i)]
\item 
its coefficients $q_{hk}$ ($h,k=1,\ldots,d$) belong to $L^\infty_{{\rm loc}}(\Omega)$;
\item 
for almost every $x\in\Omega$, the matrix $Q(x)$ is symmetric; 
\item 
$\lambda_Q$ is a locally strictly positive function. 
\end{enumerate}

\subsection{Assumptions on $A^{hk}=\bm{(a^{hk}_{ij})_{i,j=1,\ldots,m}}$}
\label{subsect-5.2}
Let us consider two different situations (in both of them the coefficients $a^{hk}_{ij}$ are supposed to belong to $L^\infty_{{\rm loc}}(\Omega)$ for every $h,k=1,\ldots,d$ and every $i,j=1,\ldots,m$):
\begin{enumerate}
\item  
there exists a positive constant $k_0$ such that
\begin{equation}\label{cond_ii}
|a^{hk}_{ij}(x)|\le k_0 \lambda_Q(x), \qquad \;\, \textit{ a.e.}\,\,x\in \Om,
\end{equation}
for every $i,j=1, \ldots, m$ and $h,k=1, \ldots,d$, and 
\begin{equation}
\label{cond_i}
\sum_{h,k=1}^d(A^{hk}(x)\theta^k,\theta^h)\ge 0,\qquad\;\, \textit{ a.e.}\,\, x \in \Om,
\end{equation}
for every $\theta^k, \theta^h\in \R^m$;
\item 
for every $h,k=1,\ldots,d$, 
$A^{hk}=q_{hk}G$, 
where $G$ is a $m\times m$ matrix-valued function having measurable, nonnegative and bounded entries $g_{ij}:\Om \to \R$,  and
$(G(x)\xi, \xi)\ge 0$ for almost everywhere $x \in \Om$ and $\xi \in \R^m.$
\end{enumerate}

In the first case, Hypothesis \ref{base}(iii) is satisfied with $\kappa_A=mdk_0 $. Indeed, from \eqref{cond_i} we get immediately that
\begin{align*} 
{\rm Re}\sum_{h,k=1}^d(A^{hk}(x)\theta^k,\theta^h)\ge 0, \qquad\;\, \theta^h, \theta^k\in \mathbb{C}^m,
\end{align*}
for almost every $x\in \Om$. 
Further, using \eqref{cond_ii} we can estimate
\begin{align*}
{\rm Re}\sum_{h,k=1}^d(A^{hk}\theta^k,\theta^h) 
&\le  \sum_{h, k=1}^d\sum_{i,j=1}^m  |a^{hk}_{ij}||\theta^k_j| |\theta^h_i|\notag\\
& \le k_0\lambda_Q\bigg(\sum_{k=1}^d\sum_{i=1}^m|\theta^k_i|\bigg)^2\notag\\
& \le md k_0  \lambda_Q\sum_{k=1}^d\sum_{i=1}^m|\theta^k_i|^2\notag\\
& \le mdk_0\mathcal{Q}(\theta^1,\ldots,\theta^d)
\end{align*}
for every $\theta^h=(\theta^h_1,\ldots, \theta^h_m), \theta^k=(\theta^k_1,\ldots, \theta^k_m),  \in \Cm$ and almost everywhere in $\Om$, whence \eqref{realLegendrefunc} follows with $\kappa_A=mdk_0$.
Further, using again \eqref{cond_ii}, we can estimate
\begin{align*}
\bigg |{\rm  Im}\sum_{h,k=1}^d(A^{hk}\theta^k,\theta^h)\bigg |
\le &   
 \sum_{h,k=1}^d\sum_{i,j=1}^m |a^{hk}_{ij}|
 \big(|{\rm Im}\hskip 1pt\theta^k_j||{\rm Re}\hskip 1pt\theta_i^h|+ |{\rm Re}\hskip 1pt\theta^k_j|{\rm Im}\hskip 1pt\theta^h_i|\big)\notag\\
\le &\frac{1}{2}\sum_{h,k=1}^d\sum_{i,j=1}^m |a^{hk}_{ij}|\big(|\theta^k_j|^2+|\theta_i^h|^2\big)\notag\\
\le & mdk_0 \lambda_Q\sum_{k=1}^d\sum_{i=1}^m|\theta^k_i|^2\notag\\
\le &\kappa_A\mathcal{Q}(\theta^1,\ldots,\theta^d),
\end{align*}
whence also condition \eqref{comLeg} is satisfied.

Now, we consider the second situation.
In order to check conditions \eqref{realLegendrefunc} and \eqref{comLeg}, we observe that 
\begin{eqnarray*}
\sum_{h,k=1}^d(A^{hk}\theta^k,\theta^h) = \sum_{h,k=1}^d\sum_{i,j=1}^m q_{hk}g_{ij}\theta^k_i\overline\theta^h_j= \sum_{i,j=1}^m g_{ij} (Q \theta_i,\theta_j)
\end{eqnarray*}
for every $\theta^1,\ldots,\theta^d\in\Cm$,
where by $\theta_i$ ($i=1,\ldots,m$) we denote the vector in $\mathbb C^d$ having coordinates $(\theta_i^1, \ldots, \theta_i^d)$. Thus, we obtain that
\begin{align}
\re \sum_{h,k=1}^d(A^{hk}\theta^k,\theta^h)
=\sum_{i,j=1}^mg_{ij}\big((Q^{\frac{1}{2}}\re \theta_i, Q^{\frac{1}{2}}\re \theta_j)+(Q^{\frac{1}{2}}\im \theta_i, Q^{\frac{1}{2}}\im \theta_j)\big ),
\label{pril}
\end{align}
which is nonnegative by the assumption on the matrix-valued function $G$. Moreover, since 
$(G(x)\xi,\xi)\le \Lambda_G|\xi|^2$ for every $\xi \in \R^m$, for some positive constant $\Lambda_G$ and for almost every in $x\in\Om$, and
\begin{align*}
\sum_{i,j=1}^m g_{ij}(Q^{\frac{1}{2}}\xi_i, Q^{\frac{1}{2}}\xi_j)
\le \sum_{i,j=1}^d g_{ij}(x)
|Q^{\frac{1}{2}}\xi_i||Q^{\frac{1}{2}}\xi_j|
\le \Lambda_G  \sum_{i=1}^d |Q^{\frac{1}{2}}\xi_i|^2
=\Lambda_G(Q\xi,\xi),
\end{align*}
for every $\xi \in \R^m$, from \eqref{pril} we conclude that
\begin{align*}
\re\sum_{h,k=1}^d(A^{hk}\theta^k,\theta^h)
\le \Lambda_G \mathcal{Q}(\theta^1,\ldots,\theta^d).
\end{align*}

Analogously, taking into account that $g_{ij}$ are nonnegative functions, we can estimate
\begin{align*}
\bigg |\im \sum_{h,k=1}^d(A^{hk}\theta^k,\theta^h)\bigg |\le\sum_{i,j=1}^m g_{ij}|\im (Q\theta_i, \theta_j)|
\le \sum_{i,j=1}^m g_{ij}|Q^{\frac{1}{2}}\theta_i||Q^{\frac{1}{2}}\theta_j|
\le 
\Lambda_G \mathcal{Q}(\theta^1,\ldots,\theta^d),
\end{align*}
whence conditions \eqref{realLegendrefunc} and \eqref{comLeg} are both satisfied with $\kappa_A=\Lambda_G$.

\subsection{Assumptions on $B^h$ and $C^h$}
Let us assume that $B^h$ and $C^h$, $h=1,\ldots,d$, belong to $L^\infty_{{\rm loc}}(\Omega;\R^{m\times m})$ and that there exists a constant $\kappa\in\, ]0,d^{-\frac{1}{2}}[$ such that 
\begin{eqnarray*} 
\max\{\|B^h(x)\|, \|C^h(x)\|\}\le \kappa(\lambda_Q(x)\lambda_V(x))^{\frac{1}{2}}, \qquad {\rm a.e.}\,\,x \in \Om.
\end{eqnarray*}
In this case, arguing as in the forthcoming Subsection \ref{subsect-5.6}, we conclude that  estimates \eqref{est_1} and \eqref{est_2} are satisfied with $\kappa_B=\kappa_C=\kappa\sqrt{d}$, $\gamma=1$ and any positive constant $C_\gamma$.

\subsection{Assumptions on $V$} 
\label{subsect-5.3}
As far as the potential term it concerned, let us assume that the matrix-valued function $V$ belongs to $L^\infty_{\rm loc}(\Omega; \R^{m \times m})$ and satisfies one of the following conditions:
\begin{enumerate}[\rm(1)]
\item 
$V(x)$ is a positive definite and symmetric real-valued matrix for almost every $x \in \Om$. with $\operatorname{essinf}_{x \in \Om}\lambda_V(x)=:v_0>0$;
\item 
$V(x)$ is a diagonal perturbation of an antisymmetric matrix-valued function for almost every $x \in \Om$, i.e., 
\begin{eqnarray*}
V(x)={\rm diag}(v_{11}(x), \ldots, v_{mm}(x))+ V_0(x),\qquad\;\,\textit{a.e. } x \in \Om,
\end{eqnarray*}
where $V_0(x)$ is an $m\times m$ antisymmetric matrix and there exist positive constants $k_1,k_2$ such that
\begin{equation*}
\operatorname{essinf}_{x \in \Om}v_{ii}(x)>k_1,\qquad\;\, i=1, \ldots,m,
\end{equation*}
and
\begin{equation*}
\sum_{j\in\{1,\ldots,m\}\setminus\{i\}}|v_{ij}(x)|\le  k_2v_{ii}(x),\qquad\;\,  i=1, \ldots, m,\,\,  \textit{a.e. }x \in \Om.
\end{equation*}
\end{enumerate}

Clearly, in the first case $V(x)=V_S(x)$ and  ${\rm Im}(V(x)\xi,\xi)=0$ for almost every $x \in \Om$ and every $\xi \in \C^m$; hence Hypothesis \ref{base}(ii) is satisfied.\\
In the second case
\begin{equation*}
{\rm Re}(V(x)\xi,\xi)= \sum_{i=1}^mv_{ii}(x)|\xi_i|^2>k_1|\xi|^2
\end{equation*}
and
\begin{equation*}
{\rm Im} (V(x)\xi,\xi)=\sum_{i=1}^m\sum_{j\in\{1,\ldots,m\}\setminus\{i\}}v_{ij}(x)\xi_j\overline{\xi}_i
\end{equation*}
for almost every $x\in \Om$ and every $\xi=(\xi_1,\ldots,\xi_m)\in\mathbb C^m$.
Thus, it follows that
\begin{align*}
|{\rm Im} (V(x)\xi,\xi)|
& \le \frac{1}{2}\sum_{i=1}^m\sum_{j\in\{1,\ldots,m\}\setminus\{i\}}|v_{ij}(x)|(|\xi_i|^2+|\xi_j|^2)\\
&= \sum_{i=1}^m|\xi_i|^2\sum_{j\in\{1,\ldots,m\}\setminus\{i\}}|v_{ij}(x)|\\
&\le k_2\re (V(x)\xi,\xi)
\end{align*}
for every $\xi\in\mathbb C^m$ and almost every  $x\in\Om$,
whence Hypothesis \ref{base}(ii) is satisfied with $c_0=k_2$.

\subsection{Assumptions on $W$}
Concerning the matrix-valued function $W\in L^\infty_{\rm loc}(\Om;\R^{m\times m})$, let us assume that 
\begin{eqnarray*}
\|W(x)\|\le \kappa_W\lambda_V(x),\qquad\;\,\textit{a.e. } x\in\Omega,    
\end{eqnarray*} 
for some constant $\kappa_W\in ]0,1-\kappa^2d[$\,. In such a case, estimate \eqref{est_3} is satisfied with $\gamma=1$ and any positive $C_\gamma$. 

\medskip

Under all the previous conditions on $Q$, $B^h$, $C^h$ ($h=1,\ldots,d$), $V$ and $W$, Hypotheses \ref{base} are satisfied. 

\medskip

To fulfill Hypotheses \ref{5bis}, the assumptions on the matrix-valued functions $B_h$, $C_h$ ($h=1,\ldots,d$) and $W$, should be refined as shown here below.

\subsection{Refined assumptions on $B_h$ and $C_h$}
\label{subsect-5.6}
Let us assume that the matrix-valued functions $B^h$ and $C^h$ satisfy the conditions 
\begin{equation} \label{allgamma}
\max\{\|B^h(x)\|, \|C^h(x)\|\}\leq \kappa (\lambda_Q(x))^{\frac{1}{2}} (\lambda_V(x))^a,\qquad\;\, h=1, \ldots,d,
\end{equation}
for almost every $x \in \Om$ and some positive constant $\kappa$.

Observing that
\begin{eqnarray*}
x^{2a} \leq \gamma x + (1-2a)\left(\frac{\gamma}{2a}\right)^{\frac{2a}{2a-1}},\qquad\;\, 
x\in [0,\infty[\,,\;\,\gamma\in ]0,\infty[\,,
\end{eqnarray*}
if $a\in \left ]0,\frac{1}{2}\right [$ and setting $\varphi(\gamma)=(1-2a)\left(\frac{\gamma}{2a}\right)^\frac{2a}{2a-1}$, we get 

\begin{align*}
\max\bigg\{\bigg |\sum_{h=1}^d (B^h\eta,\theta^h) \bigg|, \bigg |\sum_{h=1}^d (C^h\eta,\theta^h) \bigg| \bigg\}\leq &\kappa\lambda_Q^{\frac{1}{2}}\lambda_V^a|\eta|\sum_{h=1}^d |\theta^h| \\
\le & \kappa\sqrt{d}\bigg(\sum_{h=1}^d \lambda_Q|\theta^h|^2\bigg)^\frac 1 2(\lambda_V^{2a}|\eta|^2)^{\frac{1}{2}} \\
\le &\kappa\sqrt{d}\bigg(\sum_{i=1}^m \sum_{h=1}^d  \lambda_Q |\theta_i^h|^2\bigg)^\frac 1 2    \left(\gamma (V_S\eta, \eta)+ \varphi(\gamma)|\eta|^2\right)^{\frac{1}{2}}\\
\le & \kappa\sqrt{d}\big(\mathcal{Q}(\theta^1,\ldots,\theta^d)\big )^\frac 1 2  \big (\gamma (V_S\eta, \eta)+ \varphi(\gamma)|\eta|^2\big )^{\frac{1}{2}}
\end{align*}
almost everywhere in $\Om$. Thus, estimates \eqref{est_11} and \eqref{est_22} in Hypotheses \ref{5bis} are satisfied.

Finally, we observe that, if \eqref{allgamma} holds with $a=0$, then, taking  condition (1) in Subsection \ref{subsect-5.3} into account, we estimate
\begin{eqnarray*}
\max\{\|B^h(x)\|, \|C^h(x)\|\}\leq \kappa v_0^{-a} (\lambda_Q(x))^{\frac{1}{2}} (\lambda_V(x))^a,\qquad\;\, h=1, \ldots,d,
\end{eqnarray*}
for almost every $x\in\Omega$ and every $a\in \left ]0,\frac{1}{2}\right [$\,.

\subsection{Refined assumptions on $W$} 
\label{subsect-5.7}
Let us assume that 
\begin{equation*}
\|W(x)\|\le \kappa_0 (\lambda_V(x))^{b},\qquad\;\,\textit{a.e. } x\in\Omega,
\end{equation*}
for some constants $k_0\in ]0,\infty[$ and some $b \in [0,1[$\,. Arguing as in Subsection \ref{subsect-5.6}, it can be easily shown that
\begin{eqnarray*}
|(W(x)\xi,\eta)|\le \kappa_0\big (\gamma (V_S(x)\xi,\xi)+\widehat\varphi(\gamma)|\xi|^2\big )^{\frac{1}{2}}\big (\gamma (V_S(x)\eta,\eta)+\widehat\varphi(\gamma)|\eta|^2\big )^{\frac{1}{2}}      
\end{eqnarray*}
for every $\xi,\eta\in \R^m$ and every $\gamma\in ]0,\infty[\,$, where $\widehat\varphi(\gamma)=(1-b)\left (\frac{\gamma}{b}\right )^{\frac{b}{b-1}}$.
\medskip

Under the previous assumptions on $Q$, $A^{hk}$, $B^h$, $C^h$ ($h,k=1,\ldots,d)$, $V$ and $W$,  
in Subsections \ref{subsect-5.1}, \ref{subsect-5.2}, 
\ref{subsect-5.3}, \ref{subsect-5.6} and \ref{subsect-5.7}, Hypotheses \ref{5bis} are satisfied with $\kappa_B=\kappa_C=\kappa\sqrt{d}$, $\kappa_W=\kappa_0$ and $\varphi(\gamma)=\max\{(1-2a)(2a)^{\frac{2a}{1-2a}}\gamma^{\frac{2a}{2a-1}}, (1-b)b^{\frac{b}{1-b}}\gamma^{\frac{b}{b-1}}\}$ for every $\gamma\in\, ]0,\infty[$\,. 

In particular, if $b=2a$ and $\mathcal{V}$ satisfies Hypotheses \ref{V} then estimate \eqref{est_norm} holds true for every $p\in ]1,\infty[$\,, if $\kappa_A=0$, and for every $p \in \left ]1+\frac{2\kappa_A}{2\kappa_A+1},2+\frac{1}{2\kappa_A}\right [$ otherwise and
\begin{equation*} 
\gamma_p^{-1}\varphi(\gamma_p)=\frac{1-2a}{(2a)^{\frac{2a}{2a-1}}}\left(\kappa_W+\frac{p^2\kappa^2d}{4[((p-1)^2\wedge 1)-2\kappa_A((p-1)\wedge 1)|p-2|]}\right)^{\frac{1}{1-2a}}.
\end{equation*}

Finally, if, in addition to the previous assumptions, $\Om$ has the extension property, $\operatorname{essinf}_\Om \lambda_Q>0$ and $A^{hk}=0$ for every $h,k=1, \ldots,d$ then Hypotheses \ref{base-2} are satisfied and Theorem \ref{Gauss} can be applied.

\appendix

\section{A Technical result}
In this appendix, we collect a technical result that is used in the paper.

\begin{lemm}
\label{lemma-A1}
For every $\uu\in W^{1,2}(\Omega;\C^m)$, the function
$(|\uu|\wedge 1)\operatorname{sign}\hskip 1pt\uu$ belongs to $W^{1,2}(\Omega;\Cm)$ and
\begin{align}
D_k[(|\uu|\wedge 1)\operatorname{sign}\hskip 1pt\uu]=&
\frac{\re(D_k\uu,\uu)\uu}{|\uu|^2}\chi_{\{|\uu|<1\}}+(|\uu|\wedge 1)\bigg (\frac{D_ku_j}{|\uu|}-\frac{u_j\re(\uu,D_k\uu)}{|\uu|^3}\bigg )\notag\\
=&-\frac{(\operatorname{sign}\hskip 1pt \uu)D_k|\uu|}{|\uu|}\chi_{\{|\uu|>1\}}
+\frac{|\uu|\wedge 1}{|\uu|}D_k\uu
\label{form-A1}
\end{align}
for every $k=1,\ldots,d$.
\end{lemm}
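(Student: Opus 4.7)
The strategy is to write $\vv:=(|\uu|\wedge 1)\operatorname{sign}\uu$ as a composition $F\circ\uu$ with a Lipschitz map $F\colon\C^m\to\C^m$, and to invoke the chain rule for Sobolev functions composed with Lipschitz maps. Define $F(z)=z$ for $|z|\le 1$ and $F(z)=z/|z|$ for $|z|>1$, with $F(\mathbf 0)=\mathbf 0$. The two branches match continuously on the sphere $\{|z|=1\}$, and each is smooth with uniformly bounded differential on its domain, so $F$ is globally Lipschitz on $\C^m\simeq\R^{2m}$ with $F(\mathbf 0)=\mathbf 0$. Moreover $F(\uu)=(|\uu|\wedge 1)\operatorname{sign}\uu$ pointwise, since the right-hand side vanishes on $\{\uu=\mathbf 0\}$ in accordance with the convention $F(\mathbf 0)=\mathbf 0$.

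By the Marcus--Mizel chain rule for Sobolev compositions with Lipschitz maps, $\vv=F\circ\uu$ belongs to $W^{1,2}(\Omega;\C^m)$ and
\[
D_k\vv(x)=DF(\uu(x))\,D_k\uu(x)
\]
for almost every $x$ at which $F$ is differentiable at $\uu(x)$, i.e., for almost every $x$ with $|\uu(x)|\ne 1$. On the level set $\{|\uu|=1\}$ the classical identity $D_k|\uu|=0$ a.e.\ (level-set property of the scalar Sobolev function $|\uu|$) forces $\re(D_k\uu,\uu)=|\uu|\,D_k|\uu|=0$ and $D_k\vv=0$, so the formula extends to almost every $x\in\Omega$ and is, in particular, consistent with the expression in \eqref{form-A1}.

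It remains to compute $DF$ in each open region. On $\{|\uu|<1\}$, $F$ is the identity, hence $D_k\vv=D_k\uu=\dfrac{|\uu|\wedge 1}{|\uu|}D_k\uu$. On $\{|\uu|>1\}$, an elementary differentiation of $z\mapsto z/|z|$ gives
\[
D_k\vv=\frac{D_k\uu}{|\uu|}-\frac{\uu\,D_k|\uu|}{|\uu|^2}=-\frac{(\operatorname{sign}\uu)\,D_k|\uu|}{|\uu|}+\frac{D_k\uu}{|\uu|}.
\]
Combining the two regions with indicator functions produces the second expression in \eqref{form-A1}; substituting the a.e.\ identity $D_k|\uu|=|\uu|^{-1}\re(D_k\uu,\uu)$ valid on $\{\uu\ne\mathbf 0\}$ yields the first.

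The main obstacle is the careful handling of two negligible sets: the sphere $\{|\uu|=1\}$, where $F$ is only Lipschitz and not differentiable, and the zero set $\{\uu=\mathbf 0\}$, where $\operatorname{sign}\uu$ is undefined. The former is resolved by the level-set identity $D_k|\uu|=0$ for Sobolev functions, which makes both sides of the chain rule vanish consistently; the latter is resolved by the fact that the multiplicative factor $|\uu|\wedge 1\le|\uu|$ absorbs the singularity of the sign, ensuring that $F$ is continuous at the origin with $F(\mathbf 0)=\mathbf 0$, a hypothesis needed for the Marcus--Mizel chain rule when $\Omega$ is unbounded.
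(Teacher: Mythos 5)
Your argument is correct in substance, but it takes a genuinely different route from the paper. You factor $(|\uu|\wedge 1)\operatorname{sign}\uu$ as $F\circ\uu$ with the globally Lipschitz map $F(z)=z$ for $|z|\le 1$, $F(z)=z/|z|$ for $|z|>1$, and invoke the chain rule for Lipschitz compositions; the paper instead builds everything by hand in three steps, regularizing $|\uu|$, $(|\uu|-1)^+$ and $u_j|\uu|^{-1}$ by the explicit smoothings $t\mapsto\sqrt{t^2+\varepsilon^2}-\varepsilon$ and $t\mapsto(t^2+\varepsilon)^{-1/2}$ and passing to the limit by dominated convergence. Your approach is shorter and more conceptual, at the cost of citing a nontrivial external theorem: since $F$ maps $\R^{2m}$ to $\R^{2m}$ with $2m\ge 2$, the naive chain rule for Lipschitz outer functions is delicate (Ambrosio--Dal Maso, Leoni--Morini), and it is precisely your observation that $F$ is piecewise $C^1$ with singular set the sphere $\{|z|=1\}$, together with the level-set identity $D_k|\uu|=0$ a.e.\ on $\{|\uu|=1\}$, that makes the formula unambiguous there. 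The paper's computation is self-contained and also delivers the explicit dominating bound $|D_k[(|\uu|\wedge 1)\operatorname{sign}\uu]|\le 3|D_k\uu|$, which is reused in the proof of Lemma 2.8.

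One small correction: on the set $\{|\uu|=1\}$ you write that $D_k\vv=0$. That is not what the level-set identity gives you (take $m=1$ and $u=e^{ix}$: $|u|\equiv 1$ but $Du\neq 0$). What is true, and what your argument actually needs, is that $D_k|\uu|=0$ a.e.\ on $\{|\uu|=1\}$ forces the two one-sided branch formulas to coincide there, both reducing to $D_k\uu$, which is also what \eqref{form-A1} yields on that set since $\chi_{\{|\uu|>1\}}=0$ and $(|\uu|\wedge 1)/|\uu|=1$. With that substitution the proof is complete.
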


\begin{proof}
We fix $\uu\in W^{1,2}(\Omega;\C^m)$ and split the proof into three steps. 

{\em Step 1}. We claim that the function $|\uu|$ belongs to $W^{1,2}(\Omega)$. This can be easily proved by a standard approximation argument, that we sketch. First, we approximate $\uu$ by a sequence $(\uu_n)\subset C^{\infty}_c(\Omega;\C^m)$ such that
$\uu_n$ converges to $\uu$ in $L^2(\Omega;\C^m)$ and $D_k\uu_n$ converges to $D_k\uu$ in $L^2(\Omega';\C^m)$
for every  $k=1,\dots,d$ and every open set $\Omega' \Subset \Omega$. Moreover, we can assume that these convergences hold also almost everywhere in $\Omega$.

For each $n\in\N$ and $\varepsilon>0$, the function $\uu_{n,\varepsilon}=(|\uu_n|^2+\varepsilon^2)^{\frac{1}{2}}-\varepsilon$
belongs to $C^1(\Omega)$ and has compact support in $\overline{\Omega}$. Hence, it belongs to $W^{1,2}(\Omega)$. Since $|\uu_{n,\varepsilon}-(|\uu|^2+\varepsilon^2)^{\frac{1}{2}}+\varepsilon|\le |\uu_n-\uu|$ for every $n\in\N$ and $\varepsilon>0$, it follows that $\uu_{n,\varepsilon}$ converges to $\uu_{\varepsilon}=(|\uu|^2+\varepsilon^2)^{\frac{1}{2}}-\varepsilon$ in $L^2(\Omega)$. Moreover,
$D_k\uu_{n,\varepsilon}=\re (D_k\uu_n,\uu_n)(\uu_{n,\varepsilon}+\varepsilon)^{-1}$ in $\Omega$ for every $k=1,\ldots,d$ and 
\begin{eqnarray*}
|D_{k}\uu_{n,\varepsilon}-\re (D_k\uu,\uu)(\uu_{\varepsilon}+\varepsilon)^{-1}|
\le  3\varepsilon^{-1}\big (|D_k\uu_n-D_k\uu||\uu_n|+|D_k\uu||\uu_n-\uu|\big )
\end{eqnarray*}
for every $n\in\N$ and $\varepsilon>0$. This is enough to infer that the function $\bm{u}_{\varepsilon}$ admits weak derivative
$D_k\uu_{\varepsilon}=\re(D_k\uu,\uu)(\uu_{\varepsilon}+\varepsilon)^{-1}$, which belongs to $L^2(\Omega)$. Hence, $|\uu_{\varepsilon}|$ belongs to $W^{1,2}(\Omega)$.
As $\varepsilon$ tends to zero, $\uu_{\varepsilon}$ converges to $|\uu|$ in $L^2(\Omega)$ and $D_k\uu_{\varepsilon}$ converges pointwise in $\Omega$ to $\re (D_k\uu,\uu)|\uu|^{-1}$ for every $k\in\{1,\ldots,d\}$.
Moreover, $|D_k\uu_{\varepsilon}|\le |D_k\uu|$ in $\Omega$ for every $\varepsilon>0$. Hence, by dominated convergence, we conclude that $|\uu|$ admits weak derivative $D_k|\uu|$, for every $k$ as above, which belongs to $L^2(\Omega)$. This implies that $|\uu|\in W^{1,2}(\Omega)$.

{\em Step 2.} Next, we show that the function $|\uu|\wedge 1$ belongs to $W^{1,2}(\Omega)$. Since
$|\uu|\wedge 1=|\uu|-(|\uu|-1)^+$, it suffices to show that $(|\uu|-1)^+\in W^{1,2}(\Omega)$. For this purpose, for every $\varepsilon>0$, we consider the function $\varphi_{\varepsilon}\in C^1(\R)$, defined by
$\varphi_{\varepsilon}(t)=(\sqrt{(t-1)^2+\varepsilon^2}-\varepsilon)\chi_{]1,\infty[}(t)$ for every $t\in\R$. The function $\varphi_{\varepsilon}(|\uu|)$ belongs to $W^{1,2}(\Omega)$ and
\begin{eqnarray*}
D_k\varphi_{\varepsilon}(|\uu|)
=(D_k|\uu|)(|\uu|-1)[(|\uu|-1)^2+\varepsilon^2]^{-\frac{1}{2}}\chi_{\{|\uu|>1\}}
\end{eqnarray*}
for every $k\in\{1,\ldots,d\}$ and $\varepsilon\in ]0,\infty[\,$ as it can be proved arguing as above, approximating the function $|\uu|$ by a sequence $(\zeta_n)\subset C^{\infty}_c(\overline{\Omega})$ which converges, as $n$ tends to $\infty$, to $|\uu|$ in $L^2(\Omega)$ and in $W^{1,2}(\Omega')$ for every $\Omega'\Subset\Omega$. We can also assume that $\zeta_n$ and $D_k\zeta_n$ converge pointwise almost everywhere in $\Omega$ to $|\uu|$ and to $D_k|\uu|$, respectively, for every $k\in\{1,\ldots,d\}$. 
Since
\begin{eqnarray*}
|\varphi_{\varepsilon}(|\uu|)-(|\uu|-1)^+|
\le \frac{2\varepsilon(|\uu|-1)}{\sqrt{(|\uu|-1|)^2+\varepsilon^2}+(|\uu|-1+\varepsilon)}\chi_{\{|\uu|>1\}}
\le 2\varepsilon\chi_{\{|\uu|>1\}},
\end{eqnarray*}
$\varphi_{\varepsilon}(|\uu|)$ converges to $(|\uu|-1)^+$ in $L^2(\Omega)$ as $\varepsilon$ tends to $0$. Further, 
$D_k\varphi_{\varepsilon}(|\uu|)$ converges to the function $(D_k|\uu|)\chi_{\{|\uu|>1\}}\in L^2(\Omega)$ as $\varepsilon$ tends to $0$, in a dominated way. Hence, $(|\uu|-1)^+$ admits weak derivative $D_k(|\uu|-1)^+$, which belongs to 
$L^2(\Omega)$. We have so proved that $(|\uu|-1)^+$ and $|\uu|\wedge 1$ belong  to $W^{1,2}(\Omega)$ and the above computations also show that
\begin{eqnarray*}
D_k(|\uu|\wedge 1)=(D_k|\uu|)\chi_{\{|\uu|<1\}}=|\uu|^{-1}\re(D_k\uu,\uu)\chi_{\{|\uu|<1\}},\qquad\;\, k=1,\ldots,d.
\end{eqnarray*}

{\em Step 3.} Now, we complete the proof,  proving that the function $(|\uu|\wedge 1)\operatorname{sign}\hskip 1pt\uu$ belongs to $W^{1,2}(\Omega;\C^m)$. Clearly, it suffices to prove that $(|\uu|\wedge 1)u_j|\uu|^{-1}\in W^{1,2}(\Omega)$ for every $j\in\{1,\ldots,m\}$. We fix such a $j$ and introduce the function $\phi_{\varepsilon}\in C^1(\R)$ defined by $\phi_{\varepsilon}(t)=(t^2+\varepsilon)^{-\frac{1}{2}}$ for every $t\in\R$. The same arguments applied in Step 2, show that the function
$\phi_{\varepsilon}|\uu|)$ belongs to $W^{1,2}(\Omega)\cap L^{\infty}(\Omega)$. Hence, the function $(|\uu|\wedge 1)u_j\phi_{\varepsilon}(|\uu|)$ belongs to $W^{1,2}(\Omega)$ as well, and it converges to $(|\uu|\wedge 1)u_j|\uu|^{-1}$ in $L^2(\Omega)$ as $\varepsilon$ tends to $0$. Moreover, $D_k[(|\uu|\wedge 1)\phi_{\varepsilon}(|\uu|)]$ converges pointwise in $\Omega$ to the function 
\begin{eqnarray*}
\frac{\re(D_k\uu,\uu)u_j}{|\uu|^2}\chi_{\{|\uu|<1\}}+(|\uu|\wedge 1)\bigg (\frac{D_ku_j}{|\uu|}-\frac{u_j\re(\uu,D_k\uu)}{|\uu|^3}\bigg ),
\end{eqnarray*}
which belongs to $L^2(\Omega)$ and
$|D_k[(|\uu|\wedge 1)\varphi_{\varepsilon}(|\uu|)]|\le 3|D_k\uu|$ for every $\varepsilon>0$. By dominated convergence, we conclude that the function $(|\uu|\wedge 1)u_j|\uu|^{-1}$ admits first-order weak derivative along the direction $k$, which is in $L^2(\Omega)$. This is enough to infer that the function $(|\uu|\wedge 1)u_j|\uu|^{-1}\in W^{1,2}(\Omega)$ and formula \eqref{form-A1} holds.
\end{proof}

\end{document}